\newcommand{\ZZ}{{\mathbb Z}}
\newcommand{\NN}{{\mathbb N}}
\newcommand{\KK}{{\mathbb K}}
\newcommand{\CC}{{\mathbb C}}
\newcommand{\RR}{{\mathbb R}}
\newcommand{\TT}{{\mathbb T}}
\newtheorem{thm}{Théoreme}[section]
\newtheorem{defi}[thm]{Définition}
\newtheorem{prop}[thm]{Proposition}
\newtheorem{lemma}[thm]{Lemme}
\newtheorem{cor}[thm]{Corollaire}
\newtheorem{exo}[thm]{Exercice}
          {\theoremstyle{remark}
\newtheorem{exe}[thm]{Exemple}}
          {\theoremstyle{definition}
\newtheorem{rem}[thm]{Remarque}}
          \newtheorem{pb}[]{Problème}
\begin{document}
\title{Géométries énumératives complexe, réelle et tropicale}
\author{Erwan Brugallé}
\address{Université Pierre et Marie Curie,  Paris 6, 175 rue du Chevaleret, 75 013 Paris, France}
\email{brugalle@math.jussieu.fr}
\date{\today}

\maketitle
\hspace{80ex} \textit{À Seal}

\begin{abstract}
 Ce texte est une introduction à la géométrie algébrique énumérative
 et aux applications de la géométrie tropicale en géométrie
 classique, basée sur un cours 
 dispensé pendant les Journées Mathématiques X-UPS des 14 et 15 mai
 2008 à l'École Polytechnique. Le public de ces journées est composé
 de professeurs de mathématiques en classes
 préparatoires, et cette  introduction se veut accessible à un
 étudiant en première année de master de mathématiques.

Ce texte  est  extrait du livre \cite{BIT} regroupant les 3 cours
donnés lors de ces journées.
J'y ai simplement ajouté
quelques modifications mineures afin de le rendre indépendant des autres
chapitres. 
 \end{abstract}\tableofcontents

\section{Introduction}
Notre objectif ici est d'expliquer comment résoudre
simplement, par 
des méthodes combinatoires, un grand nombre de problèmes énumératifs
 grâce à la géométrie tropicale. 

La géométrie énumérative est la branche des mathématiques qui tente de
répondre à
des questions du style

\hspace{3ex}  "Combien de droites passent par 2 points dans le plan?" (facile),

\hspace{3ex} "Combien de coniques passent par 5 points dans le plan?" (facile),

\hspace{3ex} "Combien de cubiques (i.e. courbes définies par un polynôme de degré
3) passent par 9 points dans le plan?" (facile),

\hspace{3ex} "Combien de cubiques se recoupant une fois passent par 8 points dans le plan?" (moins
facile), 

\hspace{3ex} ...

\vspace{1ex}
Les problèmes énumératifs que nous considérons dans ce
texte ont été posés en toute généralité au XIXème siècle.

Si nous  comptons les courbes définies par des polynômes à coefficients
complexes, alors le nombre de courbes ne dépend pas de la
configuration de points choisie, tout comme le nombre de racines
complexes d'un polynôme en une variable à coefficients complexes est
toujours égal à 
son degré. Cela facilite quelque peu  l'énumération des courbes complexes, 
mais il aura tout de même fallu 
 attendre 
les années 1990 
et
les travaux de Kontsevich puis
de Caporaso et Harris
pour obtenir une réponse complète.

En revanche, si l'on compte les courbes définies par des
polynômes à coefficients réels, ce nombre dépend fortement des points
choisis, ce qui complique singulièrement le problème... Nous pouvons toujours
affirmer que le nombre de courbes réelles est plus petit que le nombre
de courbes complexes, tout comme le nombre de racines réelles d'un
polynôme réel en une variable est plus petit que le nombre de ses
racines complexes, cela ne nous avance finalement pas beaucoup.
Au début des années 2000, Welschinger a montré qu'en comptant avec un
signe $+1$ ou $-1$ les courbes
réelles se recoupant un nombre maximum de fois, nous obtenons un nombre
indépendant de la 
configuration de points choisie.  De plus, ce nombre, appelé
\textit{invariant de Welschinger}, nous donne une borne inférieure sur
le nombre de courbes réelles quelle que soit la configuration de
points! Ainsi, grâce à ces invariants, Itenberg, Kharlamov
et Shustin ont pu démontrer que par $3d-1$ points du plan, passait
\textit{toujours} une courbe algébrique réelle de degré $d$
(i.e. définie par un polynôme en deux variables de degré $d$ à
coefficients réels) 
se recoupant un nombre maximum de fois.

La géométrie énumérative est un très joli domaine des mathématiques où
de nombreux problèmes de base peuvent être résolus à l'aide de quelques idées
astucieuses. 
Cependant, les techniques requises pour traiter un problème
énumératif complexe ou réel plus général deviennent rapidement sophistiquées, et
leur maîtrise 
demande beaucoup de temps et d'investissement.

Parallèlement, une nouvelle géométrie a émergée au début de ce
troisième millénaire, la \textit{géométrie  tropicale}. Les objets de
cette géométrie sont linéaires par morceaux ce qui simplifie
considérablement leur étude! En particulier, 
la géométrie énumérative tropicale est beaucoup plus simple que les
géométries énumératives complexe ou réelle.
De plus,  un théorème très profond
de  Mikhalkin nous dit que l'on peut compter des courbes complexes
ou réelles simplement en comptant des courbes tropicales. Ainsi, 
grâce à la 
géométrie tropicale et à un algorithme astucieux,  Mikhalkin a pu calculer 
pour la première fois les invariants de Welschinger.

\vspace{2ex}
Ce texte est une introduction à la géométrie énumérative que j'espère
compréhensible au  niveau de première année de Master. Quelques notes
de bas de page donnent des précisions sur certains termes 
employés, mais leur compréhension n'est absolument pas nécessaire  à
la compréhension 
du texte.

\vspace{2ex}

La section \ref{alg cplx} est consacrée à l'initiation à la géométrie
énumérative complexe et à la généralisation de la question
``Combien de droites passent par 2 points du plan?''. Dans la section
\ref{enum reel}, nous verrons que les choses se compliquent lorsque
nous nous intéressons aux courbes réelles, et nous définirons les
invariants de Welschinger.
Les courbes tropicales et leur géométrie énumérative seront traitées à
la section \ref{trop}, puis nous expliquerons à la section \ref{etage}
comment passer des courbes tropicales à des objets encore plus
simples, les \textit{diagrammes en étages}. Nous utiliserons ces
diagrammes à la section \ref{appl} pour résoudre quelques problèmes
énumératifs complexes et réels.

\vspace{3ex}
\textbf{Remerciements :} Je tiens à remercier chaleureusement Lucia
Lopez de Medrano, Assia
Mahboubi, Gurvan Mével, Nicolas Puignau, Emmanuel Rey, Jean Jacques Risler et Claude
Sabbah pour
leur relecture 
attentive et leurs 
critiques constructives. Je remercie aussi le public des Journées
Mathématiques X-UPS dont les commentaires ont contribué à améliorer le
texte initial.

\section{Géométrie énumérative complexe}\label{alg cplx}

\subsection{Échauffement}

C'est un fait admis de tout le monde, par 2 points distincts du plan
 passe une unique droite. Attardons nous un instant sur la
 démonstration de cette proposition évidente.

Une droite du plan est donnée par un équation de la forme $aX+bY+c=0$
où $a,b$ et $c$ sont 3 nombres (réels ou complexes, peu importe
ici). Un point $p$ du plan est donné par 2 coordonnées
$(x_p,y_p)$, et ce point est sur la droite d'équation $aX+bY+c=0$ si
et seulement si ses coordonnées satisfont son équation, c'est-à-dire
si et seulement si 
$ax_p+by_p+c=0$. 
Si $q$ est  un deuxième point
du plan, alors la droite d'équation $aX+bY+c=0$ passe par $p$ et $q$
si et seulement si les 3 nombres $a,b$ et $c$ sont solutions du
système d'équation
\begin{equation}\label{syst 1}
\left\{ \begin{array}{ccc}
ax_p+by_p+c&=&0
\\ax_q+by_q+c&=&0
\end{array}\right. 
\end{equation}

Les points $p$ et $q$ étant donnés, nous avons donc  un système linéaire de 2
équations en les variables $a,b$ et 
$c$. De plus, si les points $p$ et $q$ sont distincts, alors ce
système est de rang 2. L'ensemble des triplets $(a,b,c)$
solutions est donc infini, et l'ensemble des droites passant par $p$ et $q$
à l'air infini... En fait, ces solutions forment une droite
vectorielle (2 équations et 3 inconnues), c'est-à-dire que tous les
triplets solutions sont colinéaires. Or, si $(a,b,c)=\lambda
(a',b',c')$ et si $(a,b,c)\ne(0,0,0)$, alors les deux
équations $aX+bY+c=0$ et $a'X+b'Y+c'=0$ définissent la même droite. En
d'autres termes, tous les triplets solutions du système (\ref{syst 1})
définissent la même droite!
Et  par deux points distincts du plan passe donc une
unique droite.

Qu'avons nous utilisé dans notre preuve? 
En passant de la droite à son équation, nous avons tout d'abord traduit
 un problème 
géométrique en un problème algébrique. Puis, pour montrer que
 2 points sont nécessaires et suffisants pour déterminer une
droite, nous avons simplement calculé la dimension
de l'espace des droites du
plan, et nous avons trouvé 2. En effet, une droite est déterminée
par les 3 coefficients de son équation, mais 3 coefficients
colinéaires définissent la 
même droite. La dimension de l'espace des droites est donc $3-1=2$.

Nous venons de résoudre un problème de géométrie énumérative à propos
de polynômes en deux variables de degré 1. Mais en y réfléchissant
bien, où avons nous utilisé que nos polynômes étaient de
degré 1? Cette  hypothèse ne nous a finalement servi que pour
identifier clairement l'espace dans lequel nous avons travaillé, afin
de pouvoir calculer sa dimension.
Ainsi, 
modulo quelques définitions, la même méthode doit marcher pour des
polynômes en deux variables de 
n'importe quel degré.

\subsection{Un problème simple de géométrie énumérative}\label{enum simple}

Rappelons tout d'abord la définition du degré d'un polynôme en deux
variables.

\begin{defi}
Soit $P(X,Y)=\sum a_{i,j}X^iY^j$ un polynôme dans $\CC[X,Y]$. Le degré
de $P(X,Y)$ est le maximum de la somme $i+j$   lorsque $a_{i,j}$
est non nul.
\end{defi}
\begin{rem}
Bien sûr, le corps de base ne joue aucun rôle dans cette définition
qui est la même pour un polynôme dans $\KK[X,Y]$, quel que soit le
corps $\KK$.
\end{rem}

Comme d'habitude, nous noterons $\CC_d[X,Y]$ l'espace vectoriel des
polynômes de degré au plus $d$. Une droite du plan est donnée par
une équation de la 
forme $aX+bY+c=0$, c'est donc 
l'ensemble solution d'un polynôme de degré 1. Plus généralement, un
sous ensemble de 
$\CC^2$ définit par une équation polynomiale en deux variables est
appelé une \textit{courbe algébrique}.

\begin{defi}
Soit $P(X,Y)$ un polynôme dans $\CC[X,Y]$ de degré au moins 1. Alors
l'ensemble $C$ des 
solutions de l'équation $P(X,Y)=0$ dans $\CC^2$ est appelé une courbe algébrique
complexe. 

On dit que la courbe $C$ est irréductible si le polynôme $P$ est
irréductible. Le degré de $C$ est le degré de $P(X,Y)$.
\end{defi}
Afin d'alléger un peu le texte, nous utiliserons dans la suite
l'expression ``courbe algébrique'' plutôt que  ``courbe 
algébrique complexe'' lorsque cela ne prêtera pas à confusion.

Une courbe algébrique irréductible est une courbe ``minimale'', en ce
sens qu'elle
n'est pas l'union de deux courbes algébriques. En effet, si
$P(X,Y)=P_1(X,Y)P_2(X,Y)$, alors la courbe définie par $P(X,Y)$ est
l'union des courbes définies par $P_1(X,Y)$ et $P_2(X,Y)$\footnote{On pourrait
  avoir $P_1(X,Y)=P_2(X,Y)$, on aurait dans ce cas là une courbe
  \textit{multiple}.}. Nous avons déjà étudié le cas des courbes algébriques de
degré 1, regardons les courbes algébriques de degré 2 et 3.

\begin{exe}
Les courbes algébriques de degré 2 sont appelées \textit{coniques},
et à changement de coordonnées affine près de $\CC^2$, il n'existe que 5
coniques dont les équations sont $X^2+Y^2-1=0$, $X^2-Y=0$, 
$X^2-Y^2=0$, $Y^2-1=0$
et $Y^2=0$. Les deux premières coniques sont irréductibles, mais pas
les trois dernières. Ces cinq coniques sont représentées sur la Figure
\ref{coniques}. 
\end{exe}

\begin{figure}[h]
\begin{center}
\begin{tabular}{ccccccccc}
\includegraphics[width=2cm, angle=0]{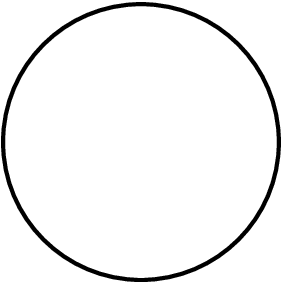}&\hspace{2ex}&
\includegraphics[width=2cm, angle=0]{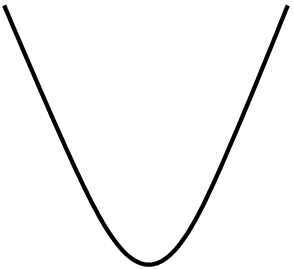}&\hspace{2ex}&
\includegraphics[width=2cm, angle=0]{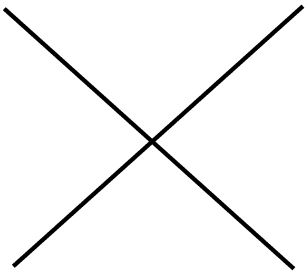}&\hspace{2ex}&
\includegraphics[width=2cm, angle=0]{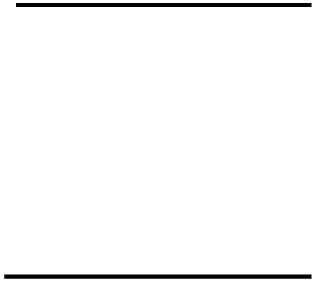}&\hspace{2ex}&
\includegraphics[width=2cm, angle=0]{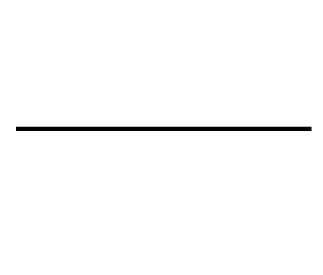}
\\ a) $X^2+Y^2-1=0$ && b) $X^2-Y=0$&& c) $X^2-Y^2=0$ &&d) $Y^2-1=0$
&& e) $Y^2=0$
\end{tabular}
\end{center}
\caption{Classification des coniques dans $\CC^2$}
\label{coniques}
\end{figure}

Évidemment, nous trichons un peu lorsque nous dessinons des courbes
dans $\CC^2$, puisque nous ne dessinons en fait que ce qui se passe
dans $\RR^2$... 

\begin{exe}
Les courbes
algébriques de degré 3 sont appelées \textit{cubiques}. La cubique
d'équation  $Y^2 - X(X^2+1)=0$ est
représentée 
sur la Figure \ref{cub}. Une grande majorité des cubiques possède une
propriété assez extraordinaire : elles peuvent être munies d'une loi de
groupe abélien! Cela fait des cubiques  des objets très appréciés des
géomètres algébristes et des  cryptographes. 
\end{exe}

\begin{figure}[h]
\begin{center}
\begin{tabular}{c}
\includegraphics[width=1cm, angle=0]{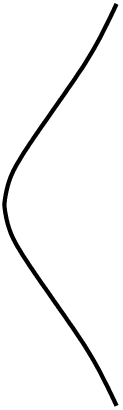}
\end{tabular}
\end{center}
\caption{La cubique dans $\CC^2$ d'équation $Y^2 - X(X^2+1)=0$}
\label{cub}
\end{figure}

Nous pouvons déjà faire deux remarques intéressantes. Tout d'abord,
comme $\CC$ est 
algébriquement clos, une courbe
algébrique $C$ n'est jamais vide : pour tout nombre complexe $x_0$, le
polynôme $P(x_0,Y)$ est maintenant un polynôme en une variable à
coefficients complexes et donc
admet au moins une racine dans $\CC$. Il existe donc un point de $C$ de la forme
$(x_0,y_0)$ pour tout $x_0$ dans $\CC$. 
Ensuite, comme dans le cas des droites, 
deux
polynômes non nuls multiples l'un de l'autre définissent la même courbe
algébrique\footnote{On peut donc voir l'espace des courbes algébriques comme
le projectivisé de l'espace des polynômes.}. 

Combien est-il nécessaire de fixer de points
 pour caractériser une courbe algébrique de
degré $d$? La réponse est encore une fois donnée par un calcul de
dimension.
Nous voyons apparaître dans la proposition suivante le mot
\textit{générique}. Ce mot reviendra constamment dans la suite de ce
texte, sans être pourtant jamais vraiment défini.
En effet, définir rigoureusement le mot \textit{générique} nous
demanderait un travail long et technique nous éloignant du sujet
traité ici\footnote{De manière générale, \textit{générique} signifie
  ``en dehors d'une sous variété algébrique d'une certaine variété
  algébrique''.}. Heureusement, le sens 
``intuitif'' du mot \textit{générique}  devrait  parfaitement suffire à
 la compréhension de 
ce texte. 

\begin{prop}\label{enum1}
Par $\frac{d(d+3)}{2}$ points génériques de $\CC^2$ passe une unique
courbe algébrique de degré $d$.
\end{prop}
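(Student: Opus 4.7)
Le plan est de g\'en\'eraliser la m\'ethode utilis\'ee dans l'\'echauffement pour le cas $d=1$ : traduire le probl\`eme en alg\`ebre lin\'eaire puis faire un calcul de dimension. D'abord, l'espace vectoriel $\CC_d[X,Y]$ admet pour base les mon\^omes $X^iY^j$ avec $i+j\leq d$, au nombre de $1+2+\cdots+(d+1)=\frac{(d+1)(d+2)}{2}$. Comme deux polyn\^omes non nuls colin\'eaires d\'efinissent la m\^eme courbe, l'espace des courbes alg\'ebriques de degr\'e au plus $d$ s'identifie au projectivis\'e de $\CC_d[X,Y]\setminus\{0\}$, et sa dimension vaut $\frac{(d+1)(d+2)}{2}-1=\frac{d(d+3)}{2}=N$.

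Ensuite, la condition qu'une courbe d'\'equation $P(X,Y)=\sum a_{i,j}X^iY^j$ passe par un point $p_k=(x_k,y_k)$ se traduit par l'\'equation lin\'eaire homog\`ene $\sum a_{i,j}x_k^iy_k^j=0$ en les inconnues $a_{i,j}$. \'Etant donn\'es $N$ points, on obtient donc un syst\`eme lin\'eaire homog\`ene de $N$ \'equations en $N+1$ inconnues, dont la matrice $M$ a ses coefficients polynomiaux en les coordonn\'ees des points.

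Le c\oe ur de la preuve, et son principal obstacle, est de montrer que pour un choix \emph{g\'en\'erique} des $N$ points, $M$ est de rang maximal $N$. Chaque mineur $N\times N$ de $M$ \'etant un polyn\^ome en les coordonn\'ees des points, il suffit d'exhiber \emph{une} configuration pour laquelle l'un de ces mineurs est non nul : le compl\'ementaire du lieu d'annulation de ce mineur est alors un ouvert de Zariski dense, ce qui donne pr\'ecis\'ement la condition de g\'en\'ericit\'e voulue. Pour construire une telle configuration, je raisonnerais par r\'ecurrence sur $k$ : supposant $p_1,\ldots,p_{k-1}$ d\'ej\`a choisis de fa\c{c}on \`a rendre ind\'ependantes les $k-1$ premi\`eres lignes de $M$, l'espace des polyn\^omes de $\CC_d[X,Y]$ s'annulant en $p_1,\ldots,p_{k-1}$ est de dimension au moins $N+1-(k-1)\geq 2$, donc contient un polyn\^ome non nul $P$ ; il suffit alors de prendre $p_k$ hors de la courbe d'\'equation $P=0$ pour que la nouvelle \'equation soit ind\'ependante des pr\'ec\'edentes.

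Une fois ce rang maximal \'etabli, l'espace des solutions du syst\`eme est une droite vectorielle de $\CC_d[X,Y]$. Comme dans le cas des droites, tous ses \'el\'ements non nuls sont proportionnels et d\'efinissent donc la m\^eme courbe alg\'ebrique, d'o\`u l'existence et l'unicit\'e annonc\'ees. Enfin, le polyn\^ome obtenu est g\'en\'eriquement de degr\'e exactement $d$, puisque sinon les $N$ points se trouveraient tous sur une courbe de degr\'e au plus $d-1$, dont l'espace est de dimension $\frac{(d-1)(d+2)}{2}<N$ --- condition qui n'est satisfaite que sur un ferm\'e strict de l'espace des configurations de points.
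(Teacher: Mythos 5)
Votre preuve est correcte et suit essentiellement la m\^eme d\'emarche que celle du texte : calcul de la dimension de $\CC_d[X,Y]$, traduction des conditions de passage en un syst\`eme lin\'eaire, puis passage au quotient par la colin\'earit\'e. Vous ajoutez seulement une justification rigoureuse (par r\'ecurrence et non-annulation d'un mineur) du fait que le syst\`eme est de rang maximal pour des points g\'en\'eriques, point que le texte se contente d'affirmer en s'appuyant sur le sens intuitif du mot \emph{g\'en\'erique}.
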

\begin{proof}
Calculons la dimension de l'espace $\CC_d[X,Y]$. Un élément de
cet espace s'écrit
$$\sum_{i+j\le d} a_{i,j}X^iY^j $$
la dimension de $\CC_d[X,Y]$ est donc le nombre de couples $(i,j)$ dans
$\NN^2$ vérifiant $i+j\le d$. En faisant varier $i$ de $0$ à $d$ et en
comptant le nombre de $j$ possibles nous obtenons
$$\dim (\CC_d[X,Y])= (d+1) + d +\ldots + 2+1=\frac{(d+2)(d+1)}{2}  $$
Chercher les polynômes  $P(X,Y)$ qui s'annulent en un
point $(x_p,y_p)$ donné revient à résoudre l'équation $P(x_p,y_p)=0$
qui est linéaire en les coefficients $a_{i,j}$ de $P(X,Y)$.  Ainsi, si
nous cherchons les polynômes  $P(X,Y)$ qui s'annulent en
$\frac{d(d+3)}{2}$ points fixés, nous nous ramenons à résoudre un système de
$\frac{d(d+3)}{2}$ équations linéaires à $\frac{(d+2)(d+1)}{2}  $
inconnues. Si les points 
sont en position générique dans le plan, alors ce système est de rang
maximal et l'ensemble solution est donc un sous espace vectoriel de
$\CC_d[X,Y]$ de dimension 
$\frac{(d+2)(d+1)}{2}- \frac{d(d+3)}{2}=1 $. Or deux polynômes
multiples l'un de l'autre définissent la même courbe, la
courbe algébrique de degré $d$ passant par nos points est donc unique. 
\end{proof}

\begin{exe}
D'après la Proposition \ref{enum1}, il existe une unique conique
passant par 5 points en position générique dans le plan. Par exemple,
la conique  représentée sur la Figure \ref{coniques 2}b est l'unique
conique passant
par les 5 points représentés sur la Figure \ref{coniques 2}a. 
\end{exe}

\begin{figure}[h]
\begin{center}
\begin{tabular}{ccc}
\includegraphics[width=2cm,
  angle=0]{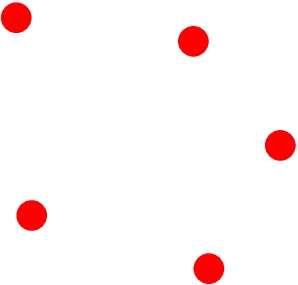}& \hspace{10ex} &
\includegraphics[width=2cm, angle=0]{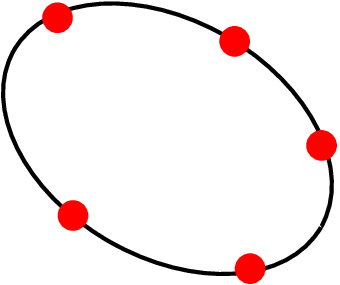}
\\ a) && b)
\end{tabular}
\end{center}
\caption{Une unique conique par 5 points}
\label{coniques 2}
\end{figure}

En regardant de près la démonstration du fait que par deux points
passe une unique droite, nous avons compris que nous pouvions
généraliser sans trop d'efforts le problème et sa solution aux courbes
algébriques de n'importe quel degré.
Maintenant que nous avons résolu ce problème plus général, observons
de plus près encore nos courbes algébriques.
Puisque nous avons choisi des points génériques, les courbes
solutions vont aussi être génériques : elles ne
se ``recoupent'' jamais. Or, il existe des courbes que se recoupent,
mais ces courbes  ne seront \textit{jamais}
solution... La raison à cela est que l'espace des courbes
qui se recoupent est beaucoup plus petit que l'espace de toutes les courbes
de degré $d$. Plus précisément, il a une dimension de moins. Mais
alors, si nous fixons un point de moins, pouvons nous faire passer une
courbe par les points restant et qui se recouperait?

Comme on peut s'en douter à la lecture du paragraphe précédent, nous
devons préalablement aller un peu plus loin dans 
l'étude des courbes 
algébriques 
avant de pouvoir poser rigoureusement notre nouveau problème. 

\subsection{Un problème énumératif plus général... et plus compliqué}
Une courbe algébrique $C$ est définie par une équation polynomiale
$P(X,Y)=0$. Comme toute courbe définie par une équation implicite, les
points de $C$ sont  naturellement séparés en deux 
ensembles. Ceux pour lesquels la différentielle de
$P(X,Y)$ ne s'annule pas, et les autres. 

\subsubsection{Courbes algébriques nodales}

Les points $p$ de $C$ pour lesquels la
différentielle de $P(X,Y)$ est non nulle en $p$ sont dits \textit{non
  singuliers}. Ce sont les points de $C$ les plus simples possible. 
D'après le théorème
des fonctions implicites, la courbe $C$ ressemble
à un graphe 
de fonction dans des coordonnées adéquates au voisinage d'un tel point. 
Les points $p$ de $C$ pour lesquels la
différentielle de $P(X,Y)$ est nulle en $p$ sont appelés les points
\textit{singuliers} de $C$. Au voisinage d'un tel point, la courbe $C$
peut prendre des formes variées et être extrêmement compliquée! Le plus
simple des points singuliers 
est le \textit{point double}.

\begin{defi}
Un point $p$ d'une courbe algébrique $C$ d'équation $P(X,Y)=0$
est appelé  point double de 
$C$ si la différentielle de $P(X,Y)$ en $p$ est nulle et si la
différentielle seconde de $P(X,Y)$ en $p$ est une forme quadratique
non dégénérée. 
\end{defi}

\begin{rem}\label{rem diff}
La différentielle et la
différentielle seconde d'un polynôme en $(0,0)$ se calculent très
facilement : si $P(X,Y)=P_0(X,Y) + P_1(X,Y) + \ldots + P_d(X,Y)$ où 
$P_i(X,Y)$ est un polynôme dont tous les monômes sont
\textit{exactement} de degré $i$, alors la différentielle de $P(X,Y)$
en l'origine est le polynôme $P_1(X,Y)$ et sa différentielle seconde
est le polynôme $P_2(X,Y)$. Pour calculer les différentielles d'un
polynôme en un point $p$ quelconque du plan, il suffit de  faire un
changement de variables affine pour ramener $p$ à l'origine et
appliquer la recette précédente.
\end{rem}

\begin{exe}
La conique d'équation $X^2+Y^2-1=0$ (voir Figure \ref{coniques}a) n'a
que des points non singuliers. Par contre, 
la conique d'équation $X^2-Y^2=0$ (voir Figure \ref{coniques}c) a un
point double à l'origine. 
\end{exe}

D'après le Lemme de Morse, au voisinage d'un point double, une courbe
algébrique ressemble à la courbe définie par une forme quadratique non
dégénérée. Le corps $\CC$ étant algébriquement clos, toutes les formes
quadratiques 
non dégénérées sur $\CC^2$ sont équivalentes à  $X^2-Y^2$.
En particulier, il existe
un unique modèle local pour un point double d'une courbe algébrique
complexe : la courbe ressemble à l'union des droites d'équation
$X-Y=0$ et $X+Y=0$. 
Ainsi, $p$ est un point
d'intersection de deux branches non singulières de $C$  à tangentes
distinctes. Au voisinage d'un point double, une courbe algébrique
ressemble donc à la Figure \ref{coniques}c.

\begin{defi}\label{def ns}
Une courbe algébrique dont tous les points sont non singuliers est
appelée courbe non singulière.

Une courbe algébrique dont les seuls points singuliers sont des points
doubles est appelée une courbe nodale. 
\end{defi}

La majorité des courbes sont non singulières. Plus précisément, une
courbe $C$ \textit{générique} est non 
singulière.
Le mot \textit{générique}  veut dire ici \textit{en dehors d'un
fermé d'intérieur vide et de mesure nulle}\footnote{Plus précisément, 
il faut prendre $C$ en dehors d'une hypersurface algébrique stricte dans
l'espace des courbes, appelée \textit{hypersurface discriminante}.}. Si l'on
considère maintenant l'espace des courbes nodales, alors une courbe
nodale générique n'a qu'un seul point double. Plus généralement, une
courbe générique dans l'espace des courbes algébriques avec au moins
$n$ points doubles a exactement $n$ points doubles.

Puisque les courbes génériques sont non singulières,  lorsque nous
cherchions à la section \ref{enum simple} les courbes 
algébriques de degré $d$ passant par $\frac{d(d+3)}{2}$ points en
position générique, nous trouvions  toujours une courbe non singulière. 
Fixons maintenant 1 point de moins, c'est-à-dire prenons
$\frac{d(d+3)}{2}-1$ points en 
position générique. Par le même raisonnement qu'à la section \ref{enum
simple}, il existe un espace de dimension 1\footnote{C'est en fait une
droite dans l'espace de courbes.} de courbes
algébriques de degré $d$ passant par ces points. Génériquement, ces
courbes sont non singulières, mais nous pouvons raisonnablement
nous attendre à ce que certaines d'entre elles soient nodales. De plus,
par généricité, ces courbes nodales auront \textit{exactement} un point double.
Si nous fixons maintenant $\frac{d(d+3)}{2}-2$ points en 
position générique, alors nous obtenons un espace de dimension 1 de courbes
nodales passant par ces points, et ces courbes auront génériquement
exactement 1 point double. Encore une fois, nous pouvons nous attendre à ce
que quelques unes de ces courbes aient au moins 2 points doubles, et  
par généricité, ces courbes auront exactement 2 points doubles.

En continuant ainsi, nous voyons que moins nous fixons de points dans $\CC^2$,
 plus nous pouvons imposer de points doubles sur les courbes passant
par ces points. De plus, une fois fixés un certain nombre de points,
on voit que le nombre maximum de points doubles est aussi fixé. Nous
pouvons même espérer trouver un nombre fini de
courbes ayant ce 
nombre maximum de points doubles et passant par les points
fixés. Demandons nous \textit{``Combien?''}, et voilà notre problème
énumératif posé.

Mais avant d'aller plus loin, nous
devons savoir combien de points 
doubles une courbe  algébrique peut avoir. Ce nombre est toujours
fini, et la proposition suivante nous donne le nombre maximum.

\begin{prop}\label{max pt2}
Une courbe algébrique complexe nodale $C$ de degré
$d$ a au plus 
$\frac{d(d-1)}{2}$ points doubles. De plus, si $C$ est irréductible,
alors elle ne peut avoir plus de
 $\frac{(d-1)(d-2)}{2}$ points doubles.
\end{prop}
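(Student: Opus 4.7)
The plan is to combine B\'ezout's theorem --- two plane algebraic curves with no common component, of degrees $d_1$ and $d_2$, meet in at most $d_1d_2$ points counted with multiplicity --- with the key observation that at a double point of a curve $C$, any other curve meets $C$ with local intersection multiplicity at least $2$. This comes from the presence at such a point of two smooth branches of $C$ with distinct tangent lines, each of which must be cut by the second curve.

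First, I would treat the irreducible case: $C$ of degree $d$ with $\delta$ double points $p_1,\ldots,p_\delta$. The idea is to construct, via a dimension count analogous to the one in the proof of Proposition \ref{enum1}, an auxiliary curve $D$ of degree $d-1$ passing through all the $p_i$ and through $r:=\frac{d(d+1)}{2}-\delta-1$ additional smooth points chosen on $C$; such a non-zero polynomial exists because $\dim\CC_{d-1}[X,Y]=\frac{d(d+1)}{2}$. Since $C$ is irreducible and $\deg D<\deg C$, the two defining polynomials share no common factor, so B\'ezout bounds the total intersection multiplicity of $C$ and $D$ by $d(d-1)$. Summing a contribution of at least $2$ at each $p_i$ and at least $1$ at each auxiliary point gives
\[ 2\delta + r \le d(d-1), \]
which simplifies to $\delta\le\frac{(d-1)(d-2)}{2}$. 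The formally excluded case $\delta\ge\frac{d(d+1)}{2}$ is ruled out by applying the same estimate to just $\frac{d(d+1)}{2}-1$ of the nodes, which forces $d\le 1$.

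For the general (possibly reducible) bound, I would decompose $C=C_1\cup\cdots\cup C_k$ into irreducible components of degrees $d_1,\ldots,d_k$ with $\sum d_i=d$. Each double point of $C$ is either a double point of a single $C_i$ --- at most $\frac{(d_i-1)(d_i-2)}{2}$ such by the irreducible case --- or a transverse intersection of two distinct components $C_i$, $C_j$ --- at most $d_id_j$ such by B\'ezout. Summing and using the identity $d^2=\sum_i d_i^2+2\sum_{i<j}d_id_j$, the total is bounded by $\sum_i\frac{d_i(d_i-1)}{2}+\sum_{i<j}d_id_j=\frac{d(d-1)}{2}$.

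The main obstacle will be the rigorous justification of the lower bound of $2$ for the local intersection multiplicity at a double point: this requires introducing the local notion of intersection multiplicity of two plane curves, as well as the statement of B\'ezout's theorem itself, neither of which has been developed so far in the text.
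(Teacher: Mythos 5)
Your plan is correct, and it actually goes beyond what the paper does: the text only proves the proposition for $d=2$ (via the local normal form $X^2-Y^2=0$ of a node, which exhibits a nodal conic as a union of two lines) and asserts the general case without proof. Your argument is the classical one. The arithmetic checks out: with $\delta$ nodes and $r=\frac{d(d+1)}{2}-\delta-1$ auxiliary smooth points, the inequality $2\delta+r\le d(d-1)$ gives exactly $\delta\le\frac{(d-1)(d-2)}{2}$, and your treatment of the degenerate case $\delta\ge\frac{d(d+1)}{2}$ (apply the estimate to $\frac{d(d+1)}{2}-1$ of the nodes to force $d\le 1$) closes the only loophole in the dimension count. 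The reducible case bookkeeping via $d^2=\sum_i d_i^2+2\sum_{i<j}d_id_j$ is also right, granted the observation that a node of $C$ lies on at most two local branches, hence is either a node of a single component or a transverse intersection of exactly two components at smooth points of each --- worth a sentence in a full write-up. You correctly identify the one real debt of the argument: B\'ezout's theorem and the local intersection multiplicity (in particular the lower bound of $2$ at a node, coming from the two smooth branches) are not developed in the text and would have to be imported; note also the small checks that the auxiliary polynomial can be chosen nonconstant and that $C$, being irreducible of degree $d>\deg D$, shares no component with $D$. Within the elementary toolkit the paper does set up (dimension counts as in Proposition \ref{enum1}), only the $d=2$ case is accessible, which is presumably why the paper stops there; your route is the standard way to get the general statement.
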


\begin{exe} 
Une droite est toujours non singulière, donc n'a jamais de point
  double. 
\end{exe}

\begin{exe} 
Prouvons la Proposition \ref{max pt2} dans le cas du degré 2. Nous
devons  montrer  qu'une conique a au maximum un unique
point double et que toute conique nodale est réductible.

Soit $C$ une conique nodale. Alors d'après la Remarque \ref{rem diff},
quitte à faire un changement de
coordonnées affine, nous pouvons supposer qu'un
 point double de $C$ est le point $(0,0)$, et  que $C$ est définie par
 l'équation $X^2-Y^2=0$. La conique $C$ est donc réductible, plus
 précisément est 
 l'union des deux 
 droites d'équation $X-Y=0$ et $X+Y=0$(voir Figure \ref{coniques}c). Comme une droite est non
 singulière, la conique $C$ ne peut pas avoir  d'autre point double. 
\end{exe}

\begin{exe} 
L'union de 3 droites non concourantes et non parallèles est une
cubique avec 3 points doubles. L'union d'une conique et d'une droite
est une cubique avec deux points doubles. Une cubique avec un point
double est donnée par l'équation $Y^2 - X^2(X + 1)=0$, et une cubique
sans point double 
est donnée par l'équation $Y^2 - X(X^2+1)=0$. Toutes ces cubiques sont
représentées à la Figure \ref{cub 2}.
\end{exe}

\begin{figure}[h]
\begin{center}
\begin{tabular}{cccc}
\includegraphics[width=3cm, angle=0]{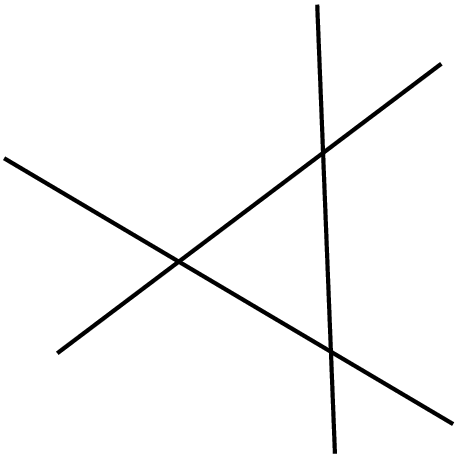}&
\includegraphics[width=3cm, angle=0]{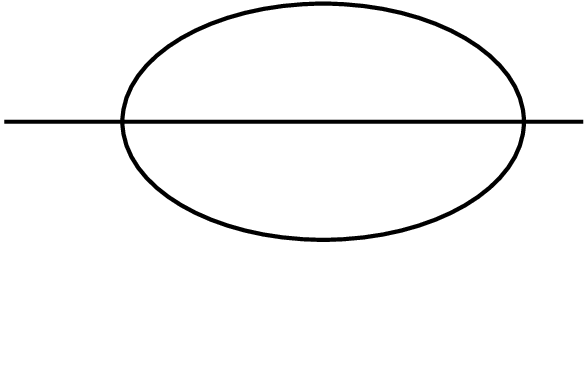}&
\includegraphics[width=2cm, angle=0]{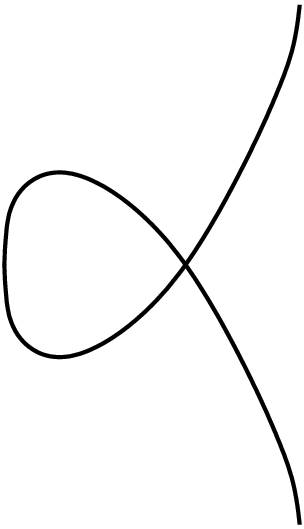}&
\includegraphics[width=1cm, angle=0]{Figures/Cubique1.eps}
\\ a) Union de 3 droites & b) Union d'une droite &
c) $Y^2 - X^2(X + 1)=0$  & d) $Y^2 - X(X^2+1)=0$
\\ & et d'une conique
\end{tabular}
\end{center}
\caption{Cubiques avec 3, 2, 1 et 0 points doubles}
\label{cub 2}
\end{figure}

\begin{exe} 
Plus généralement, il est facile de voir qu'il existe effectivement
une courbe algébrique 
nodale réductible de degré $d$ avec $\frac{d(d-1)}{2}$ points doubles
: il suffit de prendre l'union de $d$ droites dont 3 ne sont 
jamais concourantes.  En perturbant un peu l'équation de ces courbes,
on peut construire des courbes algébriques 
nodales irréductibles de degré $d$ avec $\frac{(d-1)(d-2)}{2}$ points
doubles. Un exemple en degré 4 est représenté sur les Figures
\ref{quart}a et b\footnote{\label{brusot}Voici une explication ``avec les mains''
  du fait que la courbe de la Figure \ref{quart}b est irréductible.
  On part de l'union des 4 droites de la Figure \ref{quart}a, puis on
perturbe un point double. On obtient ainsi l'union d'une courbe
irréductible de degré 2 et de deux droites. Puis, en perturbant un point
d'intersection de la conique et d'une des deux droites, on obtient l'union d'une
courbe  irréductible de degré 3 et d'une droite. Pour finir, on perturbe un
point d'intersection de la cubique et de la droite restante, et on
obtient une courbe de degré 4 irréductible avec $6-3=3$ points
doubles. Pour pouvoir  perturber n'importe
quel point double d'une courbe algébrique tout en préservant d'autres
points doubles éventuels, nous avons utilisé le Théorème de Brusotti qui
  affirme que les \textit{strates} du discriminant correspondant aux
  courbes nodales s'intersectent \textit{transversalement}.}.
\end{exe}
\begin{figure}[h]
\begin{center}
\begin{tabular}{ccccc}
\includegraphics[height=3.5cm, angle=0]{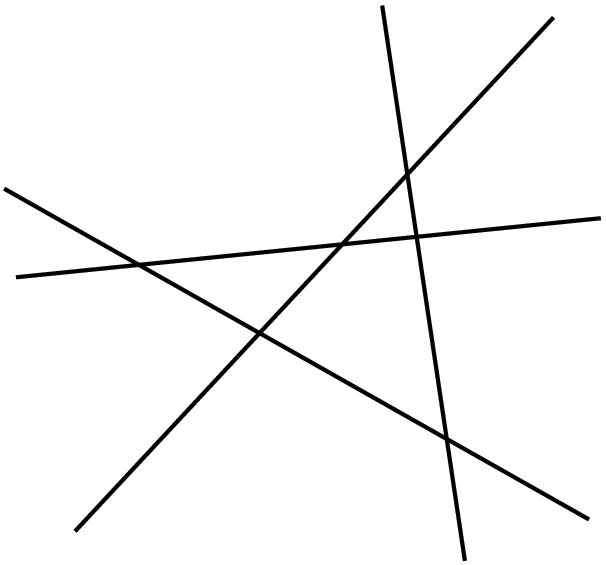}&
 \hspace{4ex} &
\includegraphics[height=3.5cm, angle=0]{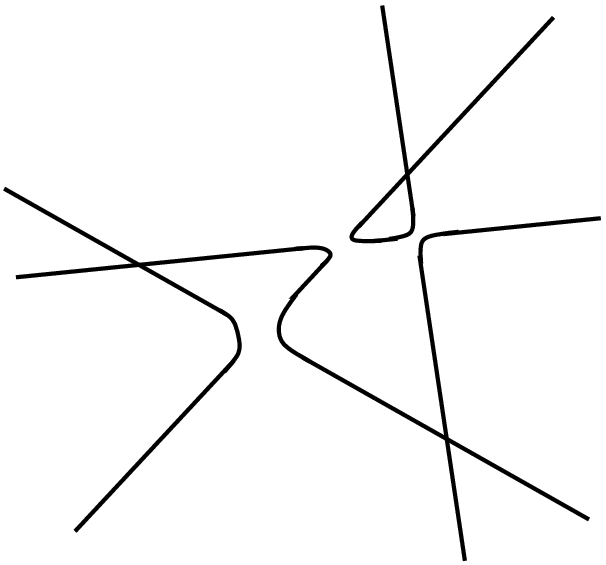}&
 \hspace{4ex} &
\includegraphics[height=3.5cm, angle=0]{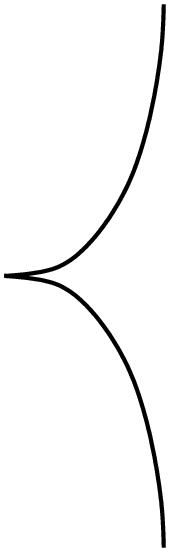}
\\ a) Union de 4 droites && b) Une courbe de degré 4&&
c) Un cusp
\\ &&  avec 3 points doubles
\end{tabular}
\end{center}
\caption{D'autres exemples de courbes algébriques dans $\CC^2$}
\label{quart}
\end{figure}

Précisons qu'il existe des courbes algébriques qui ne sont pas
nodales, c'est-à-dire possédant des points singuliers qui ne sont pas
des points 
doubles. Par exemple, la courbe d'équation $Y^2-X^3=0$ possède un
point singulier en $(0,0)$ qui n'est pas un point double, mais un
$\textit{cusp}$ (voir Figure \ref{quart}c).

Pour des raisons qu'il est difficile d'expliquer dans ce texte, les
géomètres préfèrent parler du nombre de points doubles qu'une courbe
algébrique \textit{n'a pas}, plutôt que du nombre de points doubles qu'elle a. 
Ce nombre est appelé \textit{genre} de la courbe. 

\begin{defi}
Soit $C$ une courbe algébrique nodale irréductible de degré $d$ avec $r$   
points doubles. On définit le genre\footnote{Des points doubles
  peuvent se trouver ``à l'infini'', et en toute rigueur il faudrait
  en tenir compte dans notre définition du genre. Cependant, pour les 
 problèmes énumératifs discutés dans ce texte, 
toutes les
  courbes solutions seront lisses à l'infini.}
 de $C$, noté 
$g(C)$, par
$$g(C)=\frac{(d-1)(d-2)}{2}-r $$
\end{defi}

Le genre d'une courbe nodale irréductible de degré $d$ est donc
toujours un nombre 
compris entre 0 et $\frac{(d-1)(d-2)}{2}$. Les courbes non singulières
correspondent exactement aux courbes de genre $\frac{(d-1)(d-2)}{2}$.
À l'opposé, nous verrons à la section
\ref{courbe rat} que les courbes de genre 0 sont aussi très particulières.

Nous avons déjà vu à la section \ref{enum simple} que  pour pouvoir
poser un problème énumératif, nous devons préalablement savoir
 la dimension de l'espace des courbes qui nous
intéressent. Nous connaissons déjà 
 la dimension de l'espace des
courbes non singulières de degré $d$ qui est
$\frac{d(d+3)}{2}$. D'après la discussion qui suit la Définition
\ref{def ns}, il semble qu'imposer un point double fasse baisser la
dimension de 1. Cela est effectivement vrai, et en remplaçant le nombre de
points doubles par le genre, nous obtenons la proposition
qui suit\footnote{C'est une conséquence du Théorème de Brusotti, voir
  note de bas de page \ref{brusot}.}.

\begin{prop}
L'espace\footnote{C'est une sous variété algébrique irréductible de
  l'espace des courbes algébriques de degré $d$.} des courbes
algébriques nodales irréductibles de 
degré $d$ et 
de genre $g$ est de dimension $3d-1+g$. 
\end{prop}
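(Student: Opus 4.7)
Le plan consisterait en trois \'etapes. D'abord, je calculerais la dimension de l'espace des courbes alg\'ebriques de degr\'e $d$ : comme dans la preuve de la Proposition \ref{enum1}, on a $\dim \CC_d[X,Y] = \frac{(d+1)(d+2)}{2}$, et puisque deux polyn\^omes proportionnels non nuls d\'efinissent la m\^eme courbe (on projectivise), on obtient un espace de dimension $\frac{(d+1)(d+2)}{2} - 1 = \frac{d(d+3)}{2}$.

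Ensuite, suivant la discussion qui pr\'ec\`ede l'\'enonc\'e, je chercherais \`a montrer qu'imposer un point double fait baisser la dimension d'exactement $1$. Pour cela, fixons un point $p \in \CC^2$ : la condition ``$P(X,Y)$ admet un point double en $p$'' se traduit par les trois \'equations lin\'eaires $P(p) = 0$, $\partial_X P(p) = 0$ et $\partial_Y P(p) = 0$ sur les coefficients de $P$, soit une codimension $3$ ; mais comme $p$ parcourt librement $\CC^2$, cela apporte $2$ degr\'es de libert\'e suppl\'ementaires, de sorte que la condition globale ``avoir un point double quelque part'' est de codimension $3 - 2 = 1$. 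Pour it\'erer et obtenir la codimension $r$ dans le cas de $r$ points doubles distincts, il faut s'assurer que les conditions correspondantes s'intersectent \emph{transversalement} : c'est pr\'ecis\'ement le contenu du Th\'eor\`eme de Brusotti \'evoqu\'e dans la note \ref{brusot}, que nous admettons. Enfin, l'irr\'eductibilit\'e et le fait de poss\'eder exactement $r$ points doubles (et non au moins $r$) sont deux conditions ouvertes dans cette sous-vari\'et\'e, et ne modifient donc pas sa dimension.

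Une fois ces faits admis, l'espace cherch\'e est une sous-vari\'et\'e de dimension $\frac{d(d+3)}{2} - r$ dans l'espace de toutes les courbes de degr\'e $d$. En substituant $r = \frac{(d-1)(d-2)}{2} - g$ fourni par la d\'efinition du genre, on obtient apr\`es un calcul direct
\[
\frac{d(d+3)}{2} - \frac{(d-1)(d-2)}{2} + g = \frac{(d^2 + 3d) - (d^2 - 3d + 2)}{2} + g = 3d - 1 + g,
\]
ce qui est la formule annonc\'ee. L'obstacle principal du plan est clairement le fait de transversalit\'e (Th\'eor\`eme de Brusotti) : il garantit que les $r$ conditions ``avoir un point double'' sont r\'eellement ind\'ependantes, et sa justification rigoureuse d\'epasse largement le cadre de ce texte introductif ; le reste n'est que d\'ecompte de param\`etres et alg\`ebre \'el\'ementaire.
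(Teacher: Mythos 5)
Votre démarche est correcte et suit exactement la ligne que le texte esquisse autour de l'énoncé : le papier ne donne d'ailleurs aucune démonstration formelle de cette proposition, se contentant d'affirmer que chaque point double fait baisser la dimension de $1$ et de renvoyer au Théorème de Brusotti en note de bas de page. Votre rédaction explicite utilement le décompte d'incidence ($3$ conditions linéaires en un point fixé, moins $2$ degrés de liberté pour la position du point, d'où la codimension $1$) ainsi que le calcul final $\frac{d(d+3)}{2}-\bigl(\frac{(d-1)(d-2)}{2}-g\bigr)=3d-1+g$, mais il s'agit bien du même argument, avec la même hypothèse de transversalité admise.
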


\subsubsection{Et bien, comptez maintenant!}\label{enum cplx}
Nous voici enfin prêt à poser notre problème énumératif dans toute sa
généralité.
Fixons nous un degré $d\ge 1$, un genre $g\ge 0$, et $\omega=\{p_1,\ldots,
p_{3d-1+g}\}$ une configuration  de $3d-1+g$ points dans
$\CC^2$. Considérons alors l'ensemble 
$\mathcal C (d,g,\omega)$ de toutes les courbes  algébriques
irréductibles nodales de  degré 
$d$,  de genre  $g$,  passant par  tous les points de $\omega$. 

\begin{prop}\label{inv cplx}
Pour une configuration $\omega$ générique, le cardinal de
$\mathcal C (d,g,\omega)$ est fini et ne dépend pas de $\omega$. 
\end{prop}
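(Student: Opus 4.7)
Le plan est de comparer l'espace des courbes avec celui des configurations de points par une correspondance d'incidence. Posons $N=3d-1+g$ et notons $V_{d,g}$ l'espace des courbes alg\'ebriques irr\'eductibles nodales de degr\'e $d$ et genre $g$, qui est de dimension $N$ d'apr\`es la proposition pr\'ec\'edente. Introduisons la vari\'et\'e d'incidence
\[
\mathcal{I} = \{ (C,\omega) \in V_{d,g} \times (\CC^2)^N \mid \omega \subset C \}.
\]
La premi\`ere projection $p_1 : \mathcal{I} \to V_{d,g}$ a pour fibre au-dessus de $C$ l'ensemble des $N$-uplets ordonn\'es de points de $C$, de dimension complexe $N$ puisque $\dim C = 1$. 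On obtient donc $\dim \mathcal{I} = N + N = 2N = \dim (\CC^2)^N$.

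Consid\'erons alors la seconde projection $\pi = p_2 : \mathcal{I} \to (\CC^2)^N$. Puisque ces deux vari\'et\'es ont m\^eme dimension, le th\'eor\`eme sur la dimension des fibres d'un morphisme fournit un ouvert de Zariski non vide $U \subset (\CC^2)^N$ au-dessus duquel les fibres de $\pi$ sont de dimension $0$, donc finies (\'eventuellement vides). Ceci d\'emontre d\'ej\`a la finitude de $\mathcal{C}(d,g,\omega)$ pour $\omega \in U$.

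Pour l'ind\'ependance du cardinal, l'argument-cl\'e est la semi-continuit\'e de la longueur des fibres d'un morphisme : il existe un ouvert de Zariski $U' \subset U$ au-dessus duquel $\pi$ est \'etale sur chaque composante irr\'eductible de $\mathcal{I}$, et sur $U'$ la fonction $\omega \mapsto |\pi^{-1}(\omega)| = |\mathcal{C}(d,g,\omega)|$ est localement constante. Comme $(\CC^2)^N = \CC^{2N}$ est irr\'eductible, l'ouvert $U'$ est connexe, donc cette fonction est constante sur $U'$. L'obstacle principal sera la justification rigoureuse de ces deux points --- l'existence de l'ouvert $U$ et l'\'etalit\'e g\'en\'erique sur $U'$. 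Tous deux reposent sur une \'etude locale de $V_{d,g}$ et de $\mathcal{I}$ en leurs points lisses, invoquant notamment le th\'eor\`eme de Brusotti d\'ej\`a mentionn\'e dans le texte, qui assure la transversalit\'e des strates nodales. Notons enfin qu'il n'est pas n\'ecessaire de savoir que $V_{d,g}$ lui-m\^eme est irr\'eductible (r\'esultat profond de Harris) : la somme des contributions de chaque composante irr\'eductible reste constante par le m\^eme argument.
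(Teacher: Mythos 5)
Votre d�monstration est correcte dans son principe, mais elle emprunte une route sensiblement diff�rente de celle, volontairement heuristique, du texte. Le texte se contente d'expliquer ``grossi�rement'' l'invariance : par �limination, les courbes de $\mathcal C(d,g,\omega)$ correspondent aux racines d'un polyn�me en une variable dont le degr� ne d�pend pas de la configuration g�n�rique, et, $\CC$ �tant alg�briquement clos, ce degr� est le nombre de racines. Vous remplacez cette r�duction par l'argument standard de la vari�t� d'incidence : avec $N=3d-1+g$, l'�galit� des dimensions $\dim \mathcal I = 2N = \dim(\CC^2)^N$, la finitude g�n�rique des fibres de la seconde projection, puis la constance du cardinal au-dessus d'un ouvert de Zariski connexe de $\CC^{2N}$. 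Les deux arguments reposent au fond sur le m�me fait --- le degr� d'un morphisme dominant g�n�riquement fini est bien d�fini --- mais le v�tre est celui que l'on peut rendre rigoureux sans �limination explicite, et vous observez � juste titre qu'il ne requiert pas l'irr�ductibilit� de la vari�t� de Severi $V_{d,g}$.

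Un point doit cependant �tre renforc� : le fait que $\pi$ soit �tale au-dessus d'un ouvert $U'$ ne suffit pas � rendre la fonction $\omega\mapsto \mathcal Card\left(\pi^{-1}(\omega)\right)$ localement constante ; il faut de plus que $\pi$ soit \emph{propre}, donc finie, au-dessus de $U'$. Or $\mathcal I$ n'est pas ferm�e dans $\PP(\CC_d[X,Y])\times(\CC^2)^N$, puisque $V_{d,g}$ n'est que localement ferm�e dans l'espace de toutes les courbes de degr� $d$ : lorsque $\omega$ varie, une famille de courbes de $\mathcal C(d,g,\omega)$ peut d�g�n�rer vers une courbe r�ductible ou plus singuli�re, et le cardinal de la fibre chuterait en traversant l'image de ce bord --- c'est l�, bien plus que dans l'�talit� g�n�rique, que se cache la vraie difficult�. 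Le rem�de est standard : on prend l'adh�rence $\overline{\mathcal I}$ dans $\PP(\CC_d[X,Y])\times(\CC^2)^N$, on v�rifie par le m�me calcul de dimension que $\overline{\mathcal I}\setminus\mathcal I$ est de dimension strictement inf�rieure � $2N$, donc que son image dans $(\CC^2)^N$ est contenue dans un ferm� propre que l'on retire de $U'$ ; au-dessus de l'ouvert restant, $\pi$ est finie et �tale, et votre argument de connexit� conclut.
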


Essayons d'expliquer grossièrement pourquoi ce cardinal 
est indépendant de $\omega$.
 En mettant ce problème géométrique sous une forme
algébrique, on s'aperçoit que les
courbes de $\mathcal C (d,g,\omega)$  correspondent aux
racines d'un certain polynôme
dans $ \CC[X]$. De plus,  le degré  
de ce polynôme est le même pour toute configuration générique 
$\omega$. Comme $ \CC$
est algébriquement clos, 
 le nombre de solutions est égal à ce degré, c'est-à-dire que le
 nombre de courbes dans $\mathcal C (d,g,\omega)$ est constant.

On pose alors 
$$N(d,g)=\mathcal Card\left( \mathcal C (d,g,\omega)\right).$$

Le nombre $N(d,g)$ est donc le nombre de  courbes  algébriques
irréductibles nodales\footnote{Il existe une notion de genre pour
  n'importe quelle 
  courbe algébrique, non nécessairement nodale, et on peut montrer que
toutes les courbes algébriques
irréductibles de  degré 
$d$,  de genre  $g$ et  passant par $3d-1+g$ point génériques
donnés sont toutes des courbes nodales. Ainsi, on peut oublier de
préciser nodale dans l'énoncé du problème.} de  degré 
$d$,  de genre  $g$,  passant par une configuration générique de
$3d-1+g$ points. 

\begin{exe}
Si $g=\frac{(d-1)(d-2)}{2}$, alors nous calculons le nombre de
courbes non singulières (avec 0 points doubles) passant par
$\frac{d(d+3)}{2}$ points du plan. Nous retombons donc sur le problème
traité à la section \ref{enum simple}:
\end{exe}

\begin{prop}\label{enum ns}
Pour tout $d\ge 1$ on a $N(d, \frac{(d-1)(d-2)}{2})=1$.
\end{prop}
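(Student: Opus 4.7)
The plan is to reduce the statement directly to Proposition \ref{enum1}, which has already done most of the work. The only genuine content beyond bookkeeping is checking that the unique curve produced by Proposition \ref{enum1} is actually irreducible, nodal and of maximal genus for a generic choice of points.

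First I would verify the numerology: when $g=\frac{(d-1)(d-2)}{2}$, the number of conditions is
\[
3d-1+g \;=\; 3d-1+\frac{(d-1)(d-2)}{2} \;=\; \frac{6d-2+d^2-3d+2}{2}\;=\; \frac{d(d+3)}{2},
\]
so we are exactly in the setting of Proposition \ref{enum1}. Applying that proposition to a generic configuration $\omega$ of $\frac{d(d+3)}{2}$ points yields the existence and uniqueness of an algebraic curve $C$ of degree $d$ passing through $\omega$. This takes care of the upper bound $N(d,\frac{(d-1)(d-2)}{2})\le 1$.

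It remains to check that this unique curve $C$ actually belongs to $\mathcal{C}(d,\frac{(d-1)(d-2)}{2},\omega)$, i.e.\ that it is irreducible, nodal, and of the correct genus $\frac{(d-1)(d-2)}{2}$, so that $N(d,\frac{(d-1)(d-2)}{2})\ge 1$. I would argue this by genericity: as noted in the text following Definition \ref{def ns}, a generic algebraic curve of degree $d$ is non-singular, so the locus of curves of degree $d$ that are either reducible or admit at least one singular point is a strict algebraic subset of $\CC_d[X,Y]$ (a piece of the discriminant hypersurface). The map which to a configuration $\omega$ associates the unique curve through $\omega$ is algebraic, and one can exhibit at least one configuration $\omega_0$ whose associated curve is irreducible and non-singular (for instance, perturb $\omega_0$ around any chosen smooth irreducible curve of degree $d$). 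Hence for $\omega$ generic, $C$ is irreducible and non-singular, and by definition its genus equals $\frac{(d-1)(d-2)}{2}-0=\frac{(d-1)(d-2)}{2}$.

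Combining the two inequalities gives $N(d,\frac{(d-1)(d-2)}{2})=1$. The only subtle point in this plan is the genericity argument for smoothness and irreducibility; this is precisely the kind of ``off a discriminant hypersurface'' statement that the paper has been using informally throughout, and here I would be content to appeal to that same convention rather than develop it rigorously.
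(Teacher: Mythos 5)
Votre preuve est correcte et suit essentiellement la m\^eme d\'emarche que le texte : la v\'erification num\'erique $3d-1+\frac{(d-1)(d-2)}{2}=\frac{d(d+3)}{2}$ ram\`ene l'\'enonc\'e \`a la Proposition \ref{enum1}, le texte se contentant de signaler cette r\'eduction sans d\'etailler l'argument de g\'en\'ericit\'e pour l'irr\'eductibilit\'e et la lissit\'e de la courbe obtenue. Votre compl\'ement sur ce point (rester en dehors de l'hypersurface discriminante) est exactement la convention que le texte utilise implicitement.
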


Cependant, calculer les nombres $N(d,g)$ en toute généralité s'avère
être un problème beaucoup plus difficile que le cas
$g=\frac{(d-1)(d-2)}{2})$...  
Nous savons, d'après la ``démonstration'' de 
la Proposition \ref{inv cplx}, que le nombre $N(d,g)$ peut
s'interpréter comme le degré d'un certain polynôme. Mais c'est
seulement en théorie que nous connaissons l'existence de ce polynôme! En 
particulier, nous ne connaissons pas a priori son degré. En fait,
savoir que ce polynôme existe ne nous aide pas beaucoup à
déterminer son degré, et il faudra trouver d'autres moyens pour
calculer les nombres $N(d,g)$.

\begin{pb}\label{pb1}
Comment calculer les nombres $N(d,g)$?
\end{pb}

Sans aucun doute, les origines de la géométrie énumérative remontent à
fort loin. Par exemple, les grecs
savaient certainement déjà que par 5 points passe une unique conique.
 Cependant, à ma connaissance 
le Problème \ref{pb1} n'est posé en toute généralité que
depuis le  XIXème siècle\footnote{Les problèmes énumératifs posés
  alors sont même encore plus généraux! Voici un exemple dont nous ne parlons
  pas ici : combien y a t-il de droites dans l'espace intersectant 4
  autres droites données? La réponse est 2, et il existe une bien jolie
démonstration due à Schubert.} alors  que la géométrie énumérative connaît un
véritable essor  
grâce notamment aux travaux de Chasles, De Joncquieres, Schubert,
Zeuthen ... En 1900, Hilbert propose dans son 15ème problème (voir
\cite{Hil}) de
travailler à des bases rigoureuses du calcul énumératif de Schubert. 
Malgré les progrès de la géométrie énumérative au XIXème siècle, peu
des nombres $N(d,g)$ étaient finalement connus en 1900. Le tableau
\ref{nombre 1} 
résume à peu près ce que l'on  savait à cette époque du Problème
\ref{pb1}. Pour information, le nombre $N(4,0)=620$ a été calculé pour
la première fois 
par Zeuthen  (voir \cite{Zeu1}). 

\begin{table}[!h]
$$\begin{array}{|c|c|c|c|c|}
\hline d\setminus g &\hspace{1.5ex} 0 \hspace{1.5ex} &\hspace{1.5ex}
1 \hspace{1.5ex} &\hspace{1.5ex} 2 \hspace{1.5ex} &\hspace{1.5ex}
3 \hspace{1.5ex} 
\\ \hline 1 & 1 &0  &0 & 0
\\ \hline 2 & 1 & 0 &0  & 0
\\ \hline 3 & 12 & 1 & 0 & 0
\\ \hline 4 & 620 & 225  &27 & 1
\\ \hline
\end{array}
$$
\caption{Premiers  nombres $N(d,g)$}
\label{nombre 1}
\end{table}

En plus, de ces valeurs particulières de $N(d,g)$, la réponse au
Problème \ref{pb1} pour les courbes avec un unique point double était
connue depuis longtemps. Nous donnerons une preuve tropicale de la
formule suivante à
la section \ref{appl 
  cplx}. 

\begin{prop}\label{enum 1n}
Pour tout $d\ge 3$ on a 
$$N(d, \frac{(d-1)(d-2)}{2}-1)=3(d-1)^2$$
\end{prop}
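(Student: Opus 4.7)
Le plan est de ramener le calcul de $N(d,\frac{(d-1)(d-2)}{2}-1)$ à celui du degré de l'hypersurface discriminante $\Delta_d$ dans l'espace projectif $\PP^N$ des courbes de degré $d$, où $N = \frac{d(d+3)}{2}$. Un argument d'incidence---la condition $P(p)=\partial_X P(p) = \partial_Y P(p) = 0$ étant linéaire en les coefficients de $P$ pour $p\in\CC^2$ fixé---montre que $\Delta_d$ est bien une hypersurface, dont le point générique correspond à une courbe nodale irréductible avec un unique point double. Passer par $N-1$ points génériques revient à se donner une droite $L\subset\PP^N$, et le nombre cherché est alors $|L\cap\Delta_d|=\deg\Delta_d$; il suffit donc de calculer ce degré.

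Pour cela, je considérerais un pinceau générique $(C_t)_{t\in\PP^1}$, c'est-à-dire précisément une telle droite $L$. Ce pinceau possède $d^2$ points de base par le théorème de Bézout, et après leur éclatement on obtient une surface lisse $X$ munie d'un morphisme propre $\pi:X\to\PP^1$ dont les fibres sont les transformées strictes des $C_t$. Chaque éclatement augmente la caractéristique d'Euler de $1$, donc $\chi(X) = \chi(\PP^2)+d^2 = 3+d^2$. La fibre générique étant une courbe lisse de genre $p_a = \frac{(d-1)(d-2)}{2}$, sa caractéristique vaut $2-2p_a$; une fibre singulière étant génériquement irréductible uninodale, sa normalisation est lisse de genre $p_a-1$ et on identifie ensuite deux de ses points, ce qui donne $\chi(F_{\text{sing}}) = 2-2(p_a-1)-1 = 3-2p_a$, soit un excès de $+1$ par rapport à la fibre générique. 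La formule multiplicative de la caractéristique d'Euler pour une fibration
$$\chi(X) \;=\; \chi(\PP^1)\,\chi(F_{\text{gen}}) \;+\; s\cdot\bigl(\chi(F_{\text{sing}}) - \chi(F_{\text{gen}})\bigr)$$
donne alors $3+d^2 = 4-4p_a+s$, d'où $s = d^2-1+2(d-1)(d-2) = 3(d-1)^2$.

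Le principal obstacle dans cette preuve est la justification de la généricité du pinceau: il faut s'assurer que, pour des points suffisamment généraux, toutes les fibres singulières de $\pi$ sont effectivement irréductibles uninodales, et que $L$ rencontre $\Delta_d$ transversalement en ces points (chaque fibre singulière comptant alors bien pour $1$ dans $|L\cap\Delta_d|$). Cela revient à démontrer que les autres strates du discriminant (courbes réductibles, courbes avec au moins deux points doubles ou courbes présentant une singularité plus compliquée telle qu'un cusp) sont de codimension au moins $2$ dans $\PP^N$, et donc évitées par une droite générique. Cette assertion repose sur l'argument classique de transversalité des strates nodales du discriminant, garanti par le Théorème de Brusotti évoqué dans la note \ref{brusot}.
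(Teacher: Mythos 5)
Votre preuve est correcte dans ses grandes lignes et l'arithm\'etique est juste ($d^2-1+2(d-1)(d-2)=3d^2-6d+3=3(d-1)^2$), mais elle suit une route enti\`erement diff\'erente de celle du texte. La preuve du papier est purement combinatoire : elle part de l'unique diagramme en \'etages de genre maximal, \'enum\`ere les deux fa\c{c}ons de baisser le genre de 1 (une ar\^ete de poids 2, de multiplicit\'e 4, ou une ar\^ete born\'ee suppl\'ementaire de poids 1), compte les marquages, et conclut par $\sum_{i=1}^{d-1}4(i-1)+\sum_{i=1}^{d-1}(2i+1)=3(d-1)^2$ ; elle repose donc enti\`erement sur le Th\'eor\`eme \ref{FD} (et en amont sur le th\'eor\`eme de correspondance de Mikhalkin) pris comme bo\^ite noire, apr\`es quoi le calcul est \'el\'ementaire. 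Votre approche est le calcul classique du degr\'e de l'hypersurface discriminante par un pinceau de Lefschetz et la formule multiplicative de la caract\'eristique d'Euler : elle est autonome en g\'eom\'etrie alg\'ebrique classique et ne fait appel \`a aucun r\'esultat tropical, mais tout son poids technique se concentre dans le paragraphe final que vous identifiez honn\^etement comme l'obstacle principal --- il faut v\'erifier que le lieu de base du pinceau est constitu\'e de $d^2$ points r\'eduits o\`u tous les membres sont lisses, que les strates plus profondes du discriminant (courbes r\'eductibles, \`a deux n{\oe}uds, cuspidales) sont de codimension au moins 2, et que la droite $L$, qui n'est \emph{pas} une droite g\'en\'erique de $\PP^N$ mais l'intersection de $N-1$ hyperplans du type $H_p$, rencontre n\'eanmoins $\Delta_d$ transversalement. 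Ce sont pr\'ecis\'ement les arguments de transversalit\'e (Brusotti, Severi) que l'approche par diagrammes en \'etages permet d'\'eviter une fois le th\'eor\`eme de correspondance acquis. Notez enfin que votre m\'ethode ne s'\'etend pas ais\'ement aux courbes \`a plusieurs points doubles (le cas de deux n{\oe}uds, Proposition \ref{enum 2n}, demanderait d\'ej\`a un calcul de classes caract\'eristiques nettement plus lourd), alors que la m\'ethode des diagrammes en \'etages s'y pr\^ete directement, comme le sugg\`erent les exercices de la section \ref{exos}.
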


A partir de années 70, les progrès de la géométrie algébrique contribuèrent
à réveiller l'intérêt pour les questions énumératives, et de nouveaux
nombres $N(d,g)$ furent calculés.
 Par exemple, la géométrie énumérative des courbes
avec 2 points doubles était comprise.
Une démonstration possible de la Proposition \ref{enum 2n}
est proposée à la section \ref{exos}.

\begin{prop}\label{enum 2n}
Pour tout $d\ge 4$ on a 
$$N(d,\frac{(d-1)(d-2)}{2}-2)=\frac{3}{2}(d-1)(d-2)(3d^2-3d-11) $$
\end{prop}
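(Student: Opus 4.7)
The plan is to realize $N\!\left(d,\tfrac{(d-1)(d-2)}{2}-2\right)$ as the number of ordinary double points of the curve $\Delta_\Lambda$ of singular members of a $2$-parameter linear system, and then extract it from the classical plane-curve genus--degree formula, using Proposition \ref{enum 1n} as the main input.

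First I would fix a generic configuration $\omega\subset\CC^2$ of $\frac{d(d+3)}{2}-2$ points and let $\Lambda\simeq\PP^2$ denote the $2$-parameter linear system of curves of degree $d$ through $\omega$; let $\Delta_\Lambda\subset\Lambda$ be the locus of singular members. A generic line in $\Lambda$ is the pencil of curves of degree $d$ through $\omega\cup\{p\}$ for an additional generic point $p\in\CC^2$, and Proposition \ref{enum 1n} applied to this augmented configuration yields exactly $3(d-1)^2$ singular curves in the pencil; hence $\deg(\Delta_\Lambda)=3(d-1)^2$. For generic $\omega$, $\Delta_\Lambda$ is moreover irreducible and its only singular points are ordinary double points, in bijection with the $2$-nodal curves of $\Lambda$ (whose total is the desired $N\!\left(d,\tfrac{(d-1)(d-2)}{2}-2\right)$), together with cusps, in bijection with the $1$-cuspidal curves through $\omega$; I denote the latter count by $\kappa$.

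Next I would invoke the plane-curve genus--degree relation
$$g\bigl(\widetilde{\Delta_\Lambda}\bigr)=\binom{3(d-1)^2-1}{2}-N\!\left(d,\tfrac{(d-1)(d-2)}{2}-2\right)-\kappa,$$
where $\widetilde{\Delta_\Lambda}$ denotes the normalization. The cuspidal characteristic number $\kappa=12(d-1)(d-2)$ is the classical formula of Zeuthen. For $g(\widetilde{\Delta_\Lambda})$ I would identify the normalization with the incidence variety $\mathcal V(\omega)=\{(C,p)\in\Lambda\times\CC^2:p\text{ is a node of }C\}$, which is naturally a smooth curve (two distinct points over each $2$-nodal $C$, and a single smooth preimage of each cusp). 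The ambient incidence $\mathcal V\subset\PP^N\times\CC^2$ is a $\PP^{N-3}$-bundle over $\CC^2$, since the fibre over $p$ is cut out by the three linear conditions $P(p)=\partial_XP(p)=\partial_YP(p)=0$; the $N-2$ conditions ``$P(p_i)=0$'' for $p_i\in\omega$ restrict to sections of $\mathcal O_{\mathcal V}(1)$, so that $\mathcal V(\omega)$ is a complete intersection in $\mathcal V$ whose genus is computable by a Chern-class calculation involving the first jet bundle $J^1\mathcal O_{\PP^2}(d)$.

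The main obstacle I expect is precisely this last step: the Chern-class computation yielding $g(\widetilde{\Delta_\Lambda})$ is classical but technically involved. A second point requiring care is the Bertini-type verification that, for generic $\omega$, $\Delta_\Lambda$ is indeed free of tacnodes, higher cusps, triple points and other codimension-$\geq 3$ singularities; here the Brusotti transversality theorem recalled earlier is the key input. Once both $\kappa$ and $g(\widetilde{\Delta_\Lambda})$ are known, the displayed formula determines the unknown unambiguously, and a direct simplification yields $\frac{3}{2}(d-1)(d-2)(3d^2-3d-11)$.
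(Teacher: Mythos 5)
Note first that the paper contains no proof of Proposition \ref{enum 2n}: the statement is attributed to Kleiman, and the author only \emph{proposes} a floor-diagram proof as an exercise in Section \ref{exos} --- a section explicitly titled as containing exercises whose solution the author does not know. Your classical route via the discriminant curve of a net is therefore necessarily different from anything in the paper, and it is a sound one: with $n=\deg\Delta_\Lambda=3(d-1)^2$ (correctly deduced from Proposition \ref{enum 1n}, since for $d\ge 3$ a generic pencil contains no reducible or worse-than-nodal members), $\kappa=12(d-1)(d-2)$, and $g(\widetilde{\Delta_\Lambda})=\tfrac{9}{2}(d-1)(d-2)+1$ (which is what your Chern-class set-up yields: $\mathcal V(\omega)$ is the zero locus of a section of $(\mathcal O(1)\boxtimes\mathcal O(d-1))^{\oplus 3}$ on $\Lambda\times\PP^2$, and adjunction gives $2g-2=9(d-1)(d-2)$), the relation $\delta=\frac{(n-1)(n-2)}{2}-\kappa-g$ does simplify exactly to $\frac{3}{2}(d-1)(d-2)(3d^2-3d-11)$; I checked the identity reduces to $(d-1)^2-1=d(d-2)$. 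Compared with the floor-diagram approach the paper hopes for, your method buys a conceptual, uniform derivation from intersection theory, at the price of substantial transversality verifications; the combinatorial approach would be elementary but nobody (including the author) has written it down.

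As submitted, however, your text is a plan rather than a proof: the two quantitative inputs that actually produce the answer --- the cusp count $\kappa$ and the genus of the normalization --- are only announced, not computed, and the Bertini-type statement that $\Delta_\Lambda$ is irreducible with only nodes (at binodal members) and ordinary cusps (at cuspidal members) for generic $\omega$ is asserted rather than argued; Brusotti alone does not give the cuspidal part. One point worth making explicit, since it is where the hypothesis $d\ge 4$ enters: the locus of reducible curves of type $(d_1,d_2)$ has codimension $d_1d_2\ge d-1$ in the space of degree-$d$ curves, so a generic net avoids it only when $d\ge 4$; for $d=3$ a generic net contains conic-plus-line members, which are binodal points of $\Delta_\Lambda$ but are not counted by $N(3,\cdot)$, and your bijection between nodes of $\Delta_\Lambda$ and the curves counted by $N$ would fail there. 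Stating this reconciles your argument with the range of validity of the proposition.
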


Le Lecteur intéressé pourra trouver d'autres formules du même 
tonneau dans \cite{DiFrIt}. 
Dans les années 90, une formule générale calculant tous les nombres
$N(d,g)$ a été donnée par Caporaso et Harris (voir
\cite{CapHar1}). Le problème \ref{pb1} était résolu.

La formule de 
Caporaso et Harris est assez compliquée et nous ne la donnerons pas
ici. En revanche, nous proposons au Lecteur  d'écrire lui même 
 cette formule 
à l'Exercice
\ref{exo CH}. 

Avant l'apparition de la \textit{géométrie tropicale}, les calculs des nombres
$N(d,g)$ étaient, à part quelques exceptions, assez compliqués et
demandaient un bagage technique assez important. La géométrie
  tropicale
a fourni  une nouvelle approche de ce domaine, simplifiant
considérablement les calculs précédents. En effet, les objets
tropicaux ont une nature combinatoire, beaucoup plus simple que les
courbes algébriques!
Le but de ce
texte est de  convaincre le Lecteur dans les sections \ref{trop},
\ref{etage} et \ref{appl} que 
tout un chacun peut calculer 
n'importe quel nombre $N(d,g)$ et établir d'intéressantes relations
entre ces nombres par ses propres moyen s'il dispose de
suffisamment de temps et de
patience.

\subsubsection{Courbes rationnelles}\label{courbe rat}
Les formules \ref{enum ns}, \ref{enum 1n} et \ref{enum 2n} dénombrent
 des courbes ayant peu de points doubles. À l'inverse, les
courbes ayant le nombre maximum de points doubles sont aussi très spéciales
et peuvent être étudiées avec des techniques particulières. Précisons
au passage que
ces courbes intéressent aussi beaucoup les physiciens. Nous
retrouverons les courbes de genre 0 à la section \ref{welschi}, car ce
sont quasiment les seules courbes que l'on sache étudier en profondeur
si nous prenons $\RR$ comme corps de base à la place de $\CC$.

Les courbes de genre 0 sont
aussi appelées \textit{courbes rationnelles}, car elles sont
\textit{paramétrées} par $\CC$.

\begin{thm}\label{rat}
Si $C$ est une courbe algébrique de degré $d$ et de genre 0, alors il
existe trois polynômes  en une variable et de degré $d$, $F(T), G(T)$ et
$H(T)$ tels que $C$ soit l'image de l'application rationnelle
$$\begin{array}{ccc}
\CC\setminus\{\textit{pôles de }\frac{F(t)}{H(t)} \textit{ et }
\frac{G(t)}{H(t)} \}& \longrightarrow &\CC^2 
\\ t&\longmapsto &\left(\frac{F(t)}{H(t)},  \frac{G(t)}{H(t)} \right)
\end{array} $$

\end{thm}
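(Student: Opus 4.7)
L'id�e est de construire une application birationnelle $\PP^1\to C$ en param�trant $C$ par une famille � un param�tre de courbes auxiliaires (les \textit{courbes adjointes}), le ``point mobile'' d'intersection avec $C$ fournissant directement les polyn�mes $F$, $G$, $H$ cherch�s sous forme de fractions rationnelles.

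Pour fixer les id�es, commen�ons par les petits degr�s. Si $d=1$, il n'y a rien � faire. Si $d=2$, $C$ est une conique lisse : en fixant $p_0\in C$, toute droite g�n�rique passant par $p_0$ rencontre $C$ en $p_0$ et en un unique second point d'apr�s B�zout, d'o� un param�trage de $C$ par le pinceau de droites $\PP^1$ issues de $p_0$. Si $d=3$, alors $C$ poss�de un unique point double $p_0$ ; une droite g�n�rique passant par $p_0$ y rencontre $C$ avec multiplicit� d'intersection $2$, et donc, par B�zout, en un unique autre point mobile, qui donne le param�trage.

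Pour $d\geq 4$, les droites ne suffisent plus et on utilise des courbes de degr� $d-2$. On consid�re le pinceau des courbes de degr� $d-2$ passant par les $\frac{(d-1)(d-2)}{2}$ points doubles de $C$ et par $d-3$ points lisses fix�s sur $C$. Un calcul de dimension similaire � celui de la Proposition \ref{enum1} donne bien, apr�s projectivisation, une famille de dimension $1$ (un pinceau). D'apr�s B�zout, chaque courbe de ce pinceau rencontre $C$ en $d(d-2)$ points, dont $(d-1)(d-2)$ sont absorb�s par les points doubles (avec multiplicit� $2$ chacun, les deux branches locales de $C$ �tant transverses et coup�es simplement par la courbe adjointe) et $d-3$ par les points lisses fix�s ; il reste donc $d(d-2)-(d-1)(d-2)-(d-3)=1$ point r�siduel mobile. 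Le param�tre $t\in\PP^1$ du pinceau param�trise alors $C$ rationnellement.

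La principale difficult� est la justification rigoureuse des multiplicit�s d'intersection au voisinage des points doubles, qui demande une utilisation fine de B�zout et le fait que deux branches transverses en un point contribuent ensemble $2$ � l'intersection avec une courbe lisse passant par ce point. Une fois ce point acquis, pour voir que les polyn�mes $F$, $G$, $H$ sont bien de degr� $d$ (et non plus petit), il suffit de remarquer qu'une droite g�n�rique de $\CC^2$ coupe $C$ en $d$ points distincts, correspondant ainsi � $d$ valeurs distinctes du param�tre $t$.
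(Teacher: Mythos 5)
Le texte n'offre aucune d\'emonstration du Th\'eor\`eme \ref{rat}, qui y est admis sans preuve ; votre argument ne peut donc \^etre compar\'e qu'\`a la d\'emonstration classique, dont il reprend fid\`element la trame (m\'ethode des courbes adjointes), et il est correct dans ses grandes lignes : le calcul r\'esiduel $d(d-2)-(d-1)(d-2)-(d-3)=1$ est exact, de m\^eme que le traitement des petits degr\'es et l'identification finale du degr\'e commun de $F$, $G$, $H$ \`a $d$ via l'intersection de $C$ avec une droite g\'en\'erique. Deux points m\'eritent toutefois d'\^etre consolid\'es. D'abord, le calcul de dimension \guillemotleft{} similaire \`a celui de la Proposition \ref{enum1} \guillemotright{} ne s'applique pas tel quel : les $\frac{(d-1)(d-2)}{2}$ points doubles de $C$ et les $d-3$ points lisses choisis ne sont \emph{pas} en position g\'en\'erique (ils sont tous sur $C$), donc rien ne garantit a priori que les conditions lin\'eaires soient ind\'ependantes ; on obtient seulement une dimension projective au moins \'egale \`a $1$. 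L'\'egalit\'e se d\'eduit en fait de B\'ezout : si la dimension \'etait au moins $2$, on pourrait imposer deux points lisses suppl\'ementaires de $C$, et la courbe adjointe obtenue couperait $C$ en au moins $(d-1)(d-2)+(d-1)=d(d-2)+1$ points compt\'es avec multiplicit\'e, ce qui forcerait une composante commune avec $C$, impossible puisque $C$ est irr\'eductible de degr\'e $d>d-2$. Ensuite, B\'ezout est un \'enonc\'e projectif : il faut travailler dans $\PP^2$, tenir compte d'\'eventuels points doubles \`a l'infini (cf. la note de bas de page du texte sur la d\'efinition du genre), et v\'erifier que le membre g\'en\'erique du pinceau coupe chaque branche locale de $C$ transversalement en chaque point double --- difficult\'e que vous signalez vous-m\^eme. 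Moyennant ces pr\'ecisions, la param\'etrisation birationnelle par le param\`etre du pinceau est correctement \'etablie.
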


Quelques années avant Caporaso et Harris, Kontsevich a donné une
formule récursive 
calculant tous les nombres $N(d,0)$.

\begin{thm}[Kontsevich \cite{KonMan1}]\label{Kont}
Les nombre $N(d,0)$ sont données par la relation 
$$ N(d,0)=\sum_{\begin{array}{c}d_1+d_2=d\\ d_1,\ d_2\ge
      1\end{array}}  
N(d_1,0)N(d_2,0)\left( d_1^2d_2^2\left(\begin{array}{c}3d-4
  \\ 3d_1-2\end{array}  \right)
-d_1^3d_2 \left(\begin{array}{c}3d-4
  \\ 3d_1-1\end{array}  \right)  \right)$$
si $d\ge 2$, et par la valeur initiale  $N(1,0)=1$.
\end{thm}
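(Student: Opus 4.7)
Le plan consiste à travailler dans l'espace des modules de Kontsevich $\overline{M}_{0,n}(\PP^2, d)$, qui paramètre les applications stables $(C, x_1, \ldots, x_n, f)$ où $C$ est une courbe nodale de genre arithmétique $0$ munie de $n$ points marqués lisses distincts et $f : C \to \PP^2$ un morphisme de degré $d$. C'est une variété projective irréductible de dimension $3d - 1 + n$, munie des morphismes d'évaluation $\mathrm{ev}_i : (C, (x_j), f) \mapsto f(x_i)$. Son bord est constitué des strates $D(A | B ; d_1 | d_2)$ paramétrant les configurations où le domaine $C = C_1 \cup C_2$ se scinde, les points marqués se répartissant selon la partition $A \sqcup B$ et les degrés des restrictions satisfaisant $d_1 + d_2 = d$.

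L'ingrédient géométrique crucial est une équivalence rationnelle élémentaire sur $\overline{M}_{0,4} \cong \PP^1$ : les trois points de bord $D(12|34)$, $D(13|24)$ et $D(14|23)$, correspondant aux trois partitions de $\{1,2,3,4\}$ en deux paires, sont linéairement équivalents puisqu'ils constituent simplement trois points d'une droite projective. En tirant cette équivalence en arrière par le morphisme d'oubli $\pi : \overline{M}_{0,n}(\PP^2, d) \to \overline{M}_{0,4}$ (qui conserve quatre points marqués privilégiés puis stabilise), on obtient dans l'anneau de Chow l'identité $\pi^* D(12|34) = \pi^* D(13|24)$, chaque membre étant une somme à coefficients entiers de diviseurs de bord de $\overline{M}_{0,n}(\PP^2, d)$.

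On applique cette identité dans $\overline{M}_{0,3d}(\PP^2, d)$ (dimension $6d-1$), intersectée avec le cycle de codimension $6d-2$ défini par : $\mathrm{ev}_1, \mathrm{ev}_2$ envoient sur deux points génériques $p_1, p_2$, $\mathrm{ev}_3, \mathrm{ev}_4$ sur deux droites génériques $\ell_1, \ell_2$, et les $3d-4$ évaluations restantes sur $3d-4$ points génériques. L'intersection finale est de dimension nulle, et l'on dénombre ses points de part et d'autre de l'égalité. Grâce à la formule de Bézout (les composantes $C_1, C_2$ de degrés $d_1, d_2$ se rencontrent en $d_1 d_2$ points, fournissant autant de choix de nœud) et à la combinatoire de la répartition des $3d-4$ points génériques, un calcul attentif donne pour $\pi^* D(13|24)$ la somme $\sum_{d_1+d_2=d,\, d_i \geq 1} N(d_1, 0) N(d_2, 0)\, d_1^2 d_2^2 \binom{3d-4}{3d_1-2}$, chaque composante recevant un point fixé, une droite fixée (qu'elle intersecte en $d_i$ points) et partageant le nœud. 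Pour $\pi^* D(12|34)$, les strates à deux composantes non contractées donnent l'analogue $\sum N(d_1, 0) N(d_2, 0)\, d_1^3 d_2 \binom{3d-4}{3d_1-1}$ (la composante portant $p_1, p_2$ et celle portant $p_3, p_4$ ayant des rôles distincts, d'où la dissymétrie). À cela s'ajoute une contribution isolée provenant de la strate $d_2 = 0$ : la composante contractée portant les marqués $3$ et $4$ est nécessairement envoyée sur l'unique point $\ell_1 \cap \ell_2$, ce qui impose à la composante $C_1$, de degré $d$, de passer par les $3d-1$ points $p_1, p_2, \ell_1 \cap \ell_2$ et les $3d-4$ points génériques, d'où précisément $N(d, 0)$ courbes. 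Aucune strate contractée analogue n'intervient dans $\pi^* D(13|24)$, car elle exigerait génériquement qu'un point fixé se trouve sur une droite fixée. L'égalité des deux dénombrements, après isolation de $N(d, 0)$, fournit la formule annoncée.

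L'obstacle principal est double. Sur le plan technique, il faut disposer d'une théorie de l'intersection rigoureuse sur $\overline{M}_{0,n}(\PP^2, d)$ — construction comme champ de Deligne--Mumford projectif, démonstration de l'irréductibilité et de la dimension attendue, transversalité générique des sections d'évaluation — qui permette d'assimiler degrés d'intersection et cardinaux. Sur le plan combinatoire, l'identification précise des strates apparaissant dans chaque $\pi^* D(ij|kl)$ et la détermination de leurs contributions, avec les divers facteurs $d_i$ issus de Bézout et des contacts avec les droites fixées, réclament un dénombrement minutieux où il faut notamment traiter à part les dégénérescences à composante contractée. Les valeurs initiales $N(1, 0) = N(2, 0) = 1$ amorcent alors la récurrence, et l'on retrouve les premières valeurs du Tableau \ref{nombre 1}.
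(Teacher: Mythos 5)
Le texte ne d�montre pas ce th�or�me : il est �nonc� avec un simple renvoi � \cite{KonMan1}, et la seule piste de preuve �voqu�e dans l'article --- retrouver la formule de Kontsevich via les diagrammes en �tages --- figure explicitement dans la section \ref{exos} parmi les exercices dont l'auteur ne conna�t pas la solution. Votre proposition suit donc une route enti�rement ext�rieure au texte, � savoir l'argument original de Kontsevich : l'�quivalence lin�aire des trois points de bord de $\overline{M}_{0,4}\cong\PP^1$, tir�e en arri�re sur $\overline{M}_{0,3d}(\PP^2,d)$ puis intersect�e avec les conditions impos�es par $2$ points, $2$ droites et $3d-4$ points suppl�mentaires. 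Le sch�ma est correct et la comptabilit� que vous annoncez est la bonne : les conditions sont de codimension totale $6d-2$ dans un espace de dimension $6d-1$ ; le c�t� $D(13|24)$ donne $\sum N(d_1,0)N(d_2,0)\,d_1^2d_2^2\binom{3d-4}{3d_1-2}$ ; du c�t� $D(12|34)$ les strates � deux composantes non contract�es donnent $\sum N(d_1,0)N(d_2,0)\,d_1^3d_2\binom{3d-4}{3d_1-1}$ et la strate � composante contract�e envoy�e sur $\ell_1\cap\ell_2$ isole exactement $N(d,0)$ (il faudrait aussi noter que la d�g�n�rescence sym�trique, une composante contract�e portant les marquages $1$ et $2$, est exclue puisque $p_1\neq p_2$). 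Ce que chaque approche apporte : la v�tre est conceptuelle et se g�n�ralise (c'est la relation WDVV, valable pour les invariants de Gromov--Witten de cibles bien plus g�n�rales), mais elle repose sur la machinerie des applications stables que vous ne faites qu'invoquer --- construction de $\overline{M}_{0,n}(\PP^2,d)$, transversalit� g�n�rique, multiplicit� un en chaque point d'intersection ; la machinerie propre au texte (th�or�me de correspondance puis diagrammes en �tages) calcule tous les $N(d,g)$ par un algorithme combinatoire �l�mentaire mais n'est pas connue pour redonner la r�currence de Kontsevich. En l'�tat, votre texte est un plan de preuve fid�le et exact plut�t qu'une preuve compl�te ; les points techniques que vous signalez vous-m�me sont pr�cis�ment ceux qu'il resterait � r�diger.
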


Ainsi, tous les nombres $N(d,0)$ se retrouvent à partir du fait que par 2 points
passe une unique droite!
Notons que les approches de Kontsevich, et de Caporaso et Harris sont
différentes. Ainsi, les formules obtenues sont différentes, en ce sens
que la formule de Kontsevich \textit{n'est pas} une sous formule de la formule
de Caporaso et Harris. 
Grâce au Théorème \ref{Kont}, nous pouvons calculer tous les nombres
$N(d,0)$. À peu près un siècle après Zeuthen, Vainsencher (voir
\cite{Vain1}) a calculé 
 $N(5,0)=87304$ pour la première fois. Quelques temps après, les
nombres $N(d,0)$ pour 
$d\ge 6$ sont calculés pour la première fois grâce à la formule de
Kontsevich. 
\begin{table}[!h]
$$\begin{array}{|c|c|c|c|c|c|c|c|c|}
\hline d &\hspace{4ex} 1\hspace{4ex} & \hspace{4ex}
2 \hspace{4ex}& \hspace{4ex}3\hspace{4ex}  &\hspace{4ex}
4 \hspace{4ex}&\hspace{4ex} 5\hspace{4ex}  &\hspace{4ex} 6\hspace{4ex}
&\hspace{4ex}  7\hspace{4ex}
\\ \hline
N(d,0) & 1 &1  & 12&
620
&  87304 &26312976 &14616808192
\\\hline
\end{array}$$
\caption{Premières  valeurs de $N(d,0)$}
\label{tab2}
\end{table}

À la lumière du tableau \ref{tab2}, nous pouvons constater que les
nombres $N(d,0)$ 
ont l'air de grandir assez vite.
De fait nous avons la proposition suivante.
\begin{prop}[Di Francesco - Itszykson \cite{DiFrIt}]\label{asy N}
L'asymptotique de la suite $(\ln N(d,0))_{d\ge 1}$ lorsque $d$ tend
vers l'infini est donnée par
$$\ln N(d,0)\sim 3d\ln d  $$
\end{prop}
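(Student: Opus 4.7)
Posons $u_d = \ln N(d,0)$; l'objectif est d'\'etablir $u_d/(d\ln d) \to 3$. La preuve exploite la r\'ecurrence de Kontsevich (Th\'eor\`eme \ref{Kont}) combin\'ee avec la formule de Stirling.

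\textbf{Mise en forme.} On utilise l'identit\'e $\binom{3d-4}{3d_1-1} = \binom{3d-4}{3d_1-2}\cdot\frac{3d_2-2}{3d_1-1}$ pour r\'e\'ecrire la somme de Kontsevich sous la forme
$$N(d,0) = \sum_{d_1+d_2=d} T_{d_1},\qquad T_{d_1} := N(d_1,0) N(d_2,0) \binom{3d-4}{3d_1-2}\cdot \frac{d_1^2 d_2 (2d_1-d_2)}{3d_1-1}.$$
Deux outils fondamentaux entrent alors en jeu : la formule de Stirling
$$\ln \binom{3d-4}{3d_1-2} = 3d\, H(d_1/d) + O(\ln d),\qquad H(x) = -x\ln x - (1-x)\ln(1-x),$$
et l'identit\'e alg\'ebrique $d_1\ln d_1 + d_2\ln d_2 = d\ln d - d\, H(d_1/d)$, qui vont compenser exactement leurs contributions entropiques respectives.

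\textbf{Minoration.} Un calcul \'el\'ementaire montre que pour tout $d \ge 2$ et tout $d_1 \ne d_2$, la paire $T_{d_1}+T_{d-d_1}$ est strictement positive : son num\'erateur, apr\`es factorisation, vaut $3 d_1 d_2 (d+2) - 2 d^2$, positif d\`es que $d_1 d_2 \ge d - 1$, condition toujours v\'erifi\'ee. De plus, $T_{\lceil d/2 \rceil} > 0$ pour tout $d \ge 2$. Il en r\'esulte
$$N(d,0) \ge T_{\lceil d/2\rceil} \ge c\, N(\lfloor d/2\rfloor,0)\, N(\lceil d/2\rceil,0)\cdot \frac{8^d}{\sqrt d}.$$
L'hypoth\`ese inductive $u_k \ge 3k\ln k - K_2 k$ pour $k<d$, combin\'ee aux deux identit\'es ci-dessus, fournit alors $u_d \ge 3d\ln d - K_2' d + O(\ln d)$, et donc $\liminf u_d/(d\ln d) \ge 3$.

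\textbf{Majoration.} Sous l'hypoth\`ese inductive $u_k \le 3k\ln k + K_1 k$ pour $k < d$, l'estim\'ee
$$\ln |T_{d_1}| \le u_{d_1}+u_{d_2} + \ln \binom{3d-4}{3d_1-2} + O(\ln d) \le 3d\ln d + K_1 d + O(\ln d)$$
est \emph{uniforme en} $d_1$ gr\^ace \`a la compensation entropique. La somme des $d-1$ termes donne donc $u_d \le 3d\ln d + K_1' d + O(\ln d)$, ce qui suffit pour conclure $\limsup u_d/(d\ln d) \le 3$.

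\textbf{Principale difficult\'e.} L'identit\'e entropique entra\^ine que \emph{tous} les termes de la somme de Kontsevich sont de taille comparable $e^{3d\ln d + O(d)}$; ce ph\'enom\`ene est au c\oe{}ur de l'argument. Pour la minoration, la positivit\'e des paires $T_{d_1}+T_{d-d_1}$ dispense d'une analyse d\'elicate des annulations entre termes individuellement positifs et n\'egatifs.
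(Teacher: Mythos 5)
Le texte ne d\'emontre pas cette proposition~: elle est emprunt\'ee \`a Di Francesco et Itzykson et sert ensuite d'ingr\'edient (c'est elle qui fournit la majoration $\ln N(d,0)\le (3+o(1))\,d\ln d$ dans la preuve du th\'eor\`eme d'Itenberg--Kharlamov--Shustin, o\`u seule une minoration du type $\ln N(2^{k-1},0)\ge \ln\nu(\mathcal D_k)$ est \'etablie combinatoirement, par diagrammes en \'etages et le long de la sous-suite $d=2^{k-1}$). Votre argument, enti\`erement fond\'e sur la r\'ecurrence de Kontsevich (Th\'eor\`eme \ref{Kont}), est donc une route r\'eellement diff\'erente et autonome, et ses \'etapes cl\'es sont exactes~: la r\'e\'ecriture du terme g\'en\'eral sous la forme $N(d_1,0)N(d_2,0)\binom{3d-4}{3d_1-2}\frac{d_1^2d_2(2d_1-d_2)}{3d_1-1}$ est correcte~; comme $\binom{3d-4}{3d_1-2}=\binom{3d-4}{3d_2-2}$, le crochet de $T_{d_1}+T_{d_2}$ a bien pour num\'erateur $3d_1d_2(d+2)-2d^2$, quantit\'e strictement positive pour $d\ge 2$ puisque $d_1d_2\ge d-1$ --- c'est le point d\'ecisif, qui neutralise les termes n\'egatifs de la somme de Kontsevich et l\'egitime la minoration par le seul terme m\'edian~; enfin l'identit\'e exacte $d_1\ln d_1+d_2\ln d_2=d\ln d-d\,H(d_1/d)$ compense bien l'entropie du coefficient binomial, uniform\'ement en $d_1$, ce qui est le bon m\'ecanisme pour la majoration.

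Deux points de comptabilit\'e demandent toutefois \`a \^etre r\'epar\'es avant que les r\'ecurrences ne se referment. (1) Pour la minoration, la borne $T_{\lceil d/2\rceil}\ge c\,N_1N_2\,8^d/\sqrt d$ jette trop~: avec elle, l'hypoth\`ese $u_k\ge 3k\ln k-K_2k$ ne redonne que $u_d\ge 3d\ln d-K_2d-\tfrac12\ln d+O(1)$, et la r\'ecurrence \`a constante fixe ne boucle pas. Il suffit de conserver le facteur polynomial r\'eel, \`a savoir $\frac{d_1^2d_2(2d_1-d_2)}{3d_1-1}\sim d^3/24$ et $\binom{3d-4}{3d_1-2}\ge 8^d/(Cd)$ pour $d_1=\lceil d/2\rceil$, d'o\`u $T_{\lceil d/2\rceil}\ge c\,N_1N_2\,8^d d^{2}$~: le terme $+2\ln d$ absorbe alors les constantes pour $d$ assez grand, les petits degr\'es servant de cas de base. (2) Pour la majoration, le terme d'erreur $O(\ln d)$ fait d\'eriver la constante $K_1$ \`a chaque pas de la r\'ecurrence forte~; en posant $K(d)=\max_{k\le d}(u_k-3k\ln k)/k$ on obtient $K(d)\le K(d-1)+C\ln d/d$, donc $K(d)=O\bigl((\ln d)^2\bigr)$ et $u_d\le 3d\ln d+O\bigl(d(\ln d)^2\bigr)$, ce qui suffit amplement pour $\limsup u_d/(d\ln d)\le 3$ mais n'est pas la borne $3d\ln d+K_1'd+O(\ln d)$ que votre r\'edaction sugg\`ere. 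Ces deux r\'eparations sont de routine et ne remettent pas en cause l'architecture de la preuve, qui est celle, essentiellement, de la r\'ef\'erence originale~; l'approche du texte par diagrammes en \'etages, elle, ne donne que la minoration.
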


\section{Géométrie énumérative réelle}\label{enum reel}
À la section \ref{alg cplx}, nous avons utilisé à deux endroits que
notre corps de base $\CC$ était algébriquement clos. 
Nous avons d'abord invoqué le fait que toutes les formes quadratiques
sur $\CC$ non dégénérées sont équivalentes  
pour écrire l'équation locale d'un point double d'une courbe
algébrique. Puis, pour démontrer que les nombres $N(d,g)$ ne
dépendaient pas de la configuration de points $\omega$ choisie, nous
avons utilisé le fait qu'un polynôme générique de degré $n$ dans
$\CC[X]$ a exactement $n$ 
racines.

Ces deux résultats sont faux sur $\RR$, ce qui  rend la géométrie
  réelle paradoxalement plus complexe que la géométrie complexe. Avant
  de nous lancer 
  dans la géométrie énumérative réelle proprement dite, penchons nous
  d'abord sur 
  les courbes algébriques réelles.

\subsection{Courbes algébriques réelles}\label{reel}

Si $P(X,Y)$ est un polynôme dans $\RR[X,Y]$, alors 
dans l'esprit la section \ref{alg cplx}, nous pouvons regarder
l'ensemble des 
points $(x,y)$ dans 
$\RR^2$ tels que $P(x,y)=0$. Maintenant, le fait que $\RR$ ne soit
pas algébriquement clos change beaucoup de choses. En particulier, la
courbe définie par $P(X,Y)$ peut être vide. Par exemple, aucun
point de $\RR^2$ ne vérifie $X^2+Y^2+1=0$. Nous arrivons ainsi à une petite
contradiction : nous appelons courbe l'ensemble vide...  
Comme souvent, le problème vient de ce que nous ne regardons pas le bon objet.
En effet, il ne faut pas voir une courbe algébrique réelle uniquement
comme l'ensemble des solutions dans $\RR^2$ de l'équation $P(X,Y)=0$,
mais toujours comme l'ensemble des solutions dans $\CC^2$ de
l'équation $P(X,Y)=0$. Que vient donc faire le mot ``réel'' ici
puisque nous considérons toujours les solutions d'une équation dans
$\CC^2$? Le fait que le polynôme $P$ soit réel entraîne que si
$P(x,y)=0$, alors $P(\overline x,\overline y)=0$, où $\overline x$
désigne le nombre complexe conjugué à $x$. Ainsi, notre courbe
algébrique $C$ définie par $P(X,Y)$ est beaucoup plus ``spéciale'' qu'une
courbe algébrique complexe quelconque, puisqu'elle admet une
involution\footnote{Plus précisément 
  une involution antiholomorphe.} 
$$\begin{array}{ccc}
C&\longrightarrow& C
\\(x,y) &\longmapsto& (\overline x,\overline y)
\end{array} $$
De plus, les zéros de $P(X,Y)$ dans $\RR^2$, c'est-à-dire les points réels de
$C$, ne sont autres que les points fixes de cette involution.
Ceci motive la définition suivante.

\begin{defi}
Soit $P(X,Y)$ un polynôme dans $\RR[X,Y]$ de degré au moins 1. Alors
l'ensemble $C$ des 
solutions de $P(X,Y)=0$ dans $\CC^2$ est appelé une courbe algébrique
réelle. De plus, l'ensemble $\RR C= C\cap \RR^2$ est appelé partie
réelle de $C$. 
\end{defi}

\begin{exe}
Une droite dont l'équation est à coefficients réels est un courbe
algébrique réelle, et sa partie réelle est une droite telle que nous
les
 dessinons depuis tout petits.
\end{exe}

À la section \ref{alg cplx}, nous trichions un peu en dessinant les
courbes algébriques dans $\CC^2$. Comme il est peu aisé de représenter
$\CC^2\simeq\RR^4$, nous considérions en fait des courbes algébriques réelles,
et nous dessinions leur partie réelle. Mais ici, puisque nous parlons
justement de courbes algébriques réelles, les dessins seront fidèles à
la réalité\footnote{Pour autant qu'une telle phrase ait un sens...}.
Les courbes dessinées dans les
Figures \ref{coniques}, \ref{cub}, \ref{coniques 2}, \ref{cub 2} et
\ref{quart} sont donc des 
exemples de parties réelles de courbes algébriques réelles.

\begin{exe}
La Figure \ref{crb reel} contient d'autres exemples de partie réelle
de courbes algébriques réelles de degré 2, 3 et 4. Les équation des
courbes représentée sur les Figures  \ref{crb reel}c et d sont un
peu longues, nous ne les écrivons pas. 
\end{exe}
\begin{figure}[h]
\begin{center}
\begin{tabular}{ccccc}
&
\includegraphics[height=3cm, angle=0]{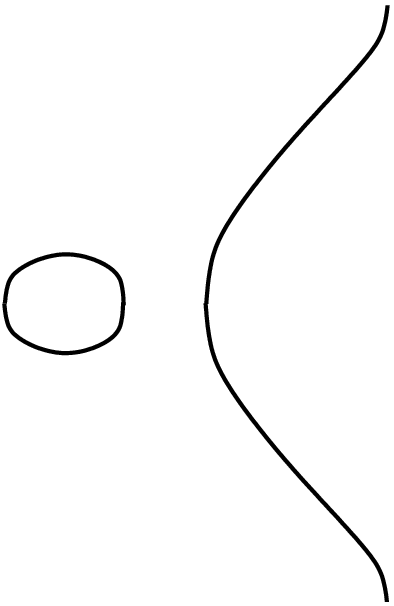}&
\includegraphics[height=3cm, angle=0]{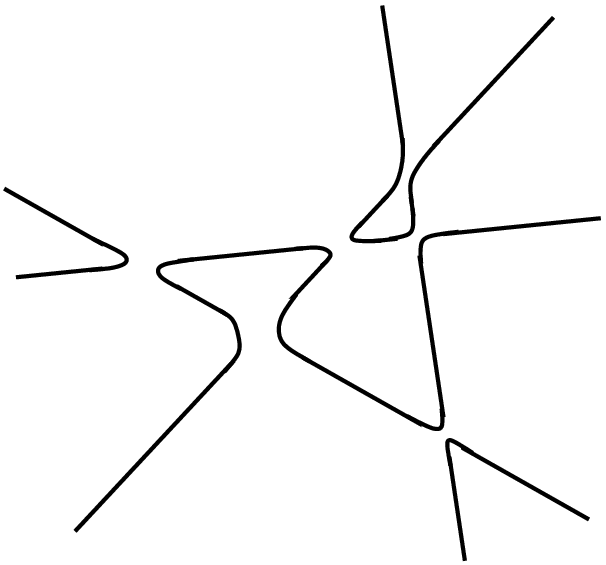}&
\hspace{0ex}&
\includegraphics[height=3cm, angle=0]{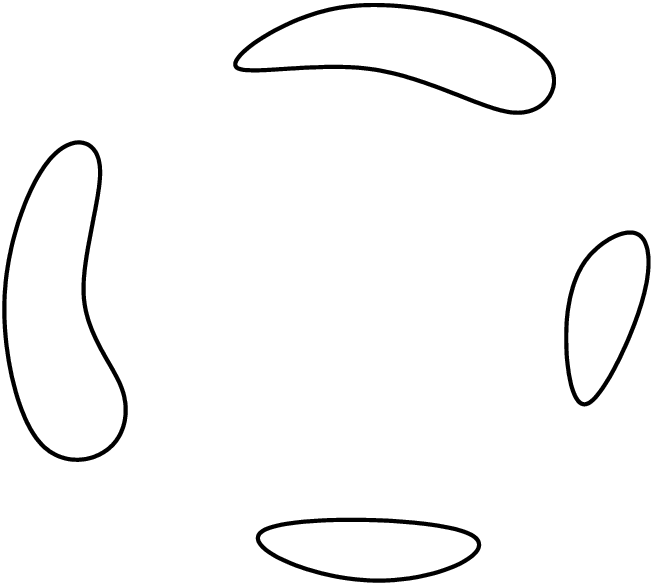}
\\ a) $X^2+Y^2+1=0$ & b) $Y^2-X(X^2-1)$ &
c) Une courbe de degré 4 && d) Une autre courbe 
\\ &&&&de degré 4
\end{tabular}
\end{center}
\caption{Courbes algébriques réelles de degré 2,3 et 4}
\label{crb reel}
\end{figure}

Pourquoi considérer tous les points dans $\CC^2$ d'une courbe
algébrique réelle au lieu de se contenter des points dans $\mathbb
R^2$? 
 Nous avons vu plus haut que considérer uniquement la partie réelle implique
 d'étudier des objets potentiellement vides. Mais il y a plus grave!
Certaines informations concernant  
 une courbe algébrique réelle peuvent être cachées dans ses points
 complexes. Par exemple, les points doubles d'une algébrique réelle
 nodale n'ont aucune raison d'être tous réels. Ainsi, si nous voulons
 définir la bonne notion de genre d'une courbe algébrique réelle, nous
 devons prendre en compte \textit{tous} les points doubles de la
 courbe, pas uniquement ceux qui se trouvent dans $\RR^2$.

Puisqu'une courbe algébrique réelle est avant tout une courbe
algébrique complexe, nous pouvons parler de courbes algébriques
réelles irréductibles, singulières ...

\begin{defi}
Une  courbe algébrique réelle $C$ est irréductible (respectivement
non singulière, nodale) si 
$C$ est irréductible (respectivement 
non singulière, nodale) en tant que courbe algébrique complexe.

Le genre d'une courbe algébrique réelle irréductible nodale $C$ est celui de $C$
vue comme courbe algébrique complexe. 
\end{defi}
\begin{rem}
Une courbe algébrique réelle définie par un polynôme $P(X,Y)$ dans
$\RR[X,Y]$ est 
irréductible si le polynôme $P(X,Y)$ est
irréductible dans $\CC[X,Y]$. 
\end{rem}

\begin{exe}
Toutes les courbes algébriques réelles représentées à la Figure
\ref{crb reel}
sont irréductibles et non singulières.
\end{exe}

\begin{exe}
Terminons de passer en revue toutes les coniques réelles possibles à
changement de variables affine de $\RR^2$ près. Les formes quadratiques 
$X^2+Y^2$ et $X^2-Y^2$ n'étant pas équivalentes sur $\RR$, il y a plus
de coniques réelles que de coniques complexes. Pour avoir toutes les
coniques réelles en plus de celles dessinées sur les Figures
\ref{coniques} et \ref{crb reel}a, nous devons ajouter les coniques
d'équations $X^2-Y^2 +1 =0$, $X^2+Y^2=0$ et $X^2+1=0$. Ces trois coniques
sont représentées à la Figure \ref{con reel}. Notons que les
polynômes $X^2+Y^2$ et $X^2+1$ sont irréductibles dans $\RR[X,Y]$, mais pas
dans $\CC[X,Y]$ et définissent donc des courbes algébriques réelles réductibles.
\end{exe}

\begin{figure}[h]
\begin{center}
\begin{tabular}{ccccc}
\includegraphics[height=3cm, angle=0]{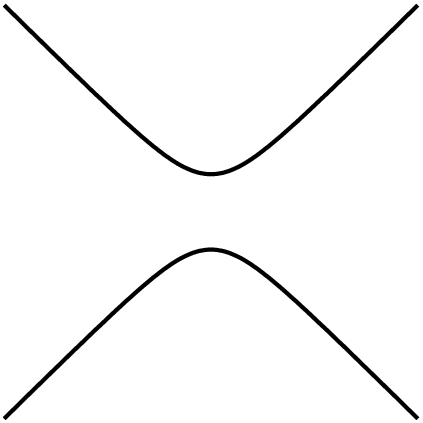}&
\hspace{5ex}  &
\includegraphics[height=3cm, angle=0]{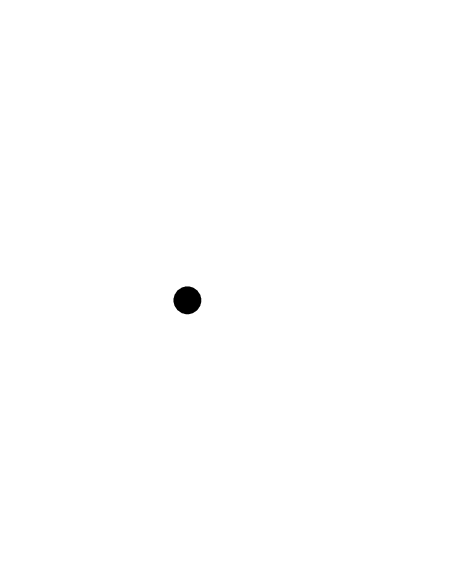}&
\hspace{5ex}  &

\\ a) $X^2-Y^2+1=0$ && b) $X^2+Y^2=0$ &&
c)  $X^2+1=0$
\end{tabular}
\end{center}
\caption{Fin de la classification des coniques dans $\RR^2$}
\label{con reel}
\end{figure}

En regardant la classification des coniques réelles, nous remarquons un
nouveau phénomène : il  y a deux types de points doubles réels
possibles pour une courbe algébrique réelle! En effet, une conique
réelle nodale est soit l'union de deux droites réelles, soit l'union
de deux droites complexes conjuguées. 
Encore une fois, cela est
dû au fait qu'à changement de variables
affine de $\RR^2$ et à l'application $P(X,Y)\to -P(X,Y)$ près, il y a
deux formes 
quadratiques non dégénérées distinctes  : $X^2+Y^2$ et
$X^2-Y^2$. Comme les formes 
quadratiques sont les équations locale d'un point double, nous voyons alors
qu'il existe effectivement deux sortes de points doubles réels pour une courbe
algébrique réelle. Nous aurons l'occasion de reparler de ces deux
types de points doubles réels à la section \ref{welschi}.

\begin{defi}
Soit $C$ une courbe algébrique réelle, et $p\in \RR C$ un point double
de $C$. On dit que $p$ est un point double réel isolé de $\RR C$ si la
différentielle seconde de $C$ en $p$ est équivalente sur $\RR$ à
$X^2+Y^2$. On dit que $p$ est un point double réel non isolé de $\RR C$ si la
différentielle seconde de $C$ en $p$ est équivalente sur $\RR$ à
$X^2-Y^2$. 
\end{defi}

Si $p$ est un point double réel isolé de $\RR C$, alors au voisinage de
$p$, $C$ ressemble à la courbe définie par l'équation $X^2+Y^2=0$. En
particulier, $p$ est le seul point de $\RR C$ dans ce voisinage, d'où
le nom de point double isolé. De plus, comme $X^2+Y^2=(X+iY)(X-iY)$,
au voisinage de 
$p$ la courbe $C$ ressemble à l'union de deux droites complexes conjuguées,
c'est-à-dire que $p$ est le point d'intersection de 
deux branches complexes conjuguées de $C$. Cette situation est
représentée à la Figure \ref{pt 2 reel}a où les pointillés
représentent les 
deux branches complexes conjuguées de $C$.

Au voisinage d'un point double réel non isolé, la courbe $C$ ressemble  à
la courbe définie par l'équation $X^2-Y^2=0$, et  donc à
l'union de deux droites réelles (voir la Figure \ref{pt 2 reel}b).

\begin{figure}[h]
\begin{center}
\begin{tabular}{ccccc}
\includegraphics[height=2cm, angle=0]{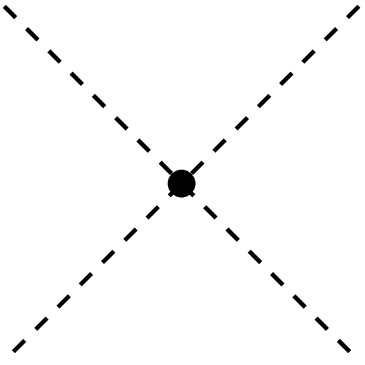}&
\hspace{4ex}  &
\includegraphics[height=2cm, angle=0]{Figures/Conique3.eps}&
\hspace{4ex}  &
\includegraphics[height=3cm, angle=0]{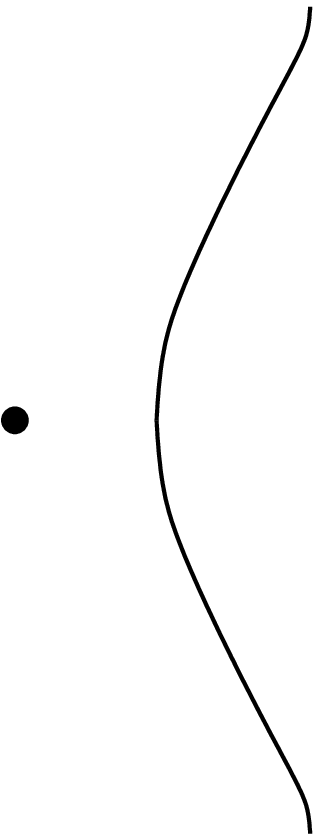}

\\ a) Un point double isolé && b) Un point double non isolé &&
c) $Y^2 - X^2(X-1)=0$
\end{tabular}
\end{center}
\caption{Points doubles réels}
\label{pt 2 reel}
\end{figure}

\begin{exe}
La cubique d'équation $Y^2 -X^2(X+1)=0$ (voir Figure \ref{cub 2}c) a un
point double réel non isolé en $(0,0)$. La cubique d'équation   $Y^2 -
X^2(X-1)=0$ (voir Figure \ref{pt 2 reel}c) a un  
point double réel isolé en $(0,0)$.
\end{exe}

\subsection{Le réel, c'est compliqué}
Maintenant que nous avons  défini les courbes algébriques
réelles nodales irréductibles, nous pouvons jouer au même jeu qu'à la
section \ref{enum cplx}. 
Fixons nous un degré $d\ge 1$, un genre $g\ge 0$, et
une configuration générique\footnote{\emph{générique} comme configuration de points dans
$\CC^2$. De telles configurations de points réels existent. En effet, il
existe un polynôme $f$ à coefficients complexes non tous nuls tel que les
coordonnées des points des configurations $\omega$ non génériques satisfont à
$f=0$. Lorsque ces coordonnées sont toutes réelles, elles satisfont aux deux
équations réelles obtenues en remplaçant les coefficients de $f$ par leur
partie réelle, resp. imaginaire. L'une des deux est nécessairement non
triviale, donc les configurations de points réels ne peuvent pas toutes être
non génériques.}
 $\omega=\{p_1,\ldots, p_{3d-1+g}\}$ de
$3d-1+g$ points dans 
$\RR^2$. Considérons 
alors l'ensemble 
$\RR \mathcal C (d,g,\omega)$ de toutes les courbes  algébriques  réelles
irréductibles 
nodales de  degré 
$d$,  de genre  $g$,  passant par  tous les points de $\omega$. Comme
$\RR^2 \subset \CC^2$, il est clair que  $\RR \mathcal C
(d,g,\omega)\subset \mathcal 
C (d,g,\omega)$, où $\mathcal C (d,g,\omega)$ est l'ensemble des courbes
algébriques \textit{complexes} de degré $d$ 
et de genre $g$ passant par les points de $\omega$.

Nous avons  vu à la section \ref{enum cplx} que pour une configuration
de points
$\omega$ dans 
$\CC^2$, les courbes de $\mathcal C (d,g,\omega)$
sont données par les racines d'un polynôme dans $\CC[X]$, dont le
degré, et donc le nombre de racines, ne dépend pas du choix de la
configuration générique $\omega$. Si les points 
$p_i$ sont dans $\RR^2$, alors ce polynôme est dans $\RR[X]$. Maintenant,
le nombre de racines réelles d'un polynôme réel ne
dépend pas uniquement du degré du polynôme, mais aussi de ses
coefficients. C'est-à-dire que la Proposition \ref{inv cplx} a peu de
chance d'être vraie
 dans le cas réel : le cardinal de $\RR \mathcal C (d,g,\omega)$
dépend de la configuration 
$\omega$ choisie! Par exemple, on peut facilement
trouver une configuration de 8 points dans $\RR^2$ par laquelle passent
12 cubiques réelles rationnelles, et une autre par laquelle passent
seulement 8 cubiques réelles rationnelles (voir Proposition \ref{W3}). 

Les racines d'un polynôme dans $\RR[X]$ sont soient réelles, soit
naturellement associées en paires de racines complexes
conjuguées. Puisque les solutions d'un problème énumératif réel sont
les racines d'un polynôme réel, les solutions sont soit des courbes
réelles, soit 
naturellement associées en paires de courbes complexes conjuguées. 
Reprenons l'exemple des 8 points de $\RR^2$ par lesquels passent
seulement 8 cubiques réelles rationnelles. Nous savons que 
 par  ces 8 points
 passent
12 cubiques complexes rationnelles, donc 4 de ces cubiques complexes
rationnelles ne sont pas réelles et sont regroupées en
 deux paires de cubiques complexes conjuguées.

La géométrie énumérative réelle semble donc plus compliquée que la
géométrie énumérative complexe... Il y a cependant un cas où la
méthode employée pour compter les courbes complexes marche tout aussi
bien pour
compter les courbes réelles.

\begin{prop}\label{lisse reel}
Pour tous $d\ge 1$ et $\omega$ générique, on a $\mathcal Card\left( \RR \mathcal C(d,
\frac{(d-1)(d-2)}{2},\omega) \right)=1$. 
\end{prop}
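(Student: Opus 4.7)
The plan is to mimic closely the proof of Proposition \ref{enum ns}, and then check that the unique complex curve produced there is actually a real curve.

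First I would work directly with $\RR_d[X,Y]$ instead of $\CC_d[X,Y]$. Since the real and complex spaces of polynomials of degree at most $d$ have the same dimension $\frac{(d+1)(d+2)}{2}$, and since for a point $p_i\in\RR^2$ the condition $P(x_{p_i},y_{p_i})=0$ is a \emph{real} linear equation in the coefficients of $P$, the system determining polynomials in $\RR_d[X,Y]$ vanishing at $p_1,\ldots,p_{\frac{d(d+3)}{2}}$ consists of $\frac{d(d+3)}{2}$ real linear equations in $\frac{(d+1)(d+2)}{2}$ real unknowns. For $\omega$ generic (in the real sense, i.e.\ outside a proper real algebraic subvariety of $(\RR^2)^{\frac{d(d+3)}{2}}$), the rank of the system is maximal, exactly as in the proof of Proposition \ref{enum1}. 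The real solution space is then a \emph{real} line in $\RR_d[X,Y]$, and all nonzero elements of this line define the same real algebraic curve $C$; so there is at most one such curve, and indeed exactly one.

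Alternatively, and perhaps more conceptually, one may simply quote Proposition \ref{enum ns}: for generic $\omega\subset\RR^2\subset\CC^2$, there is a unique complex curve $C$ of degree $d$ and genus $\frac{(d-1)(d-2)}{2}$ through $\omega$. Since $\omega$ is invariant under complex conjugation, the conjugate curve $\overline C$ also passes through $\omega$ and has the same degree and genus. By uniqueness, $\overline C=C$, so $C$ is a real curve. This gives the inequality $\mathcal Card(\RR\mathcal C)\ge 1$, while the inclusion $\RR\mathcal C(d,\frac{(d-1)(d-2)}{2},\omega)\subset \mathcal C(d,\frac{(d-1)(d-2)}{2},\omega)$ together with $N(d,\frac{(d-1)(d-2)}{2})=1$ gives $\mathcal Card(\RR\mathcal C)\le 1$.

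The only delicate point is the notion of genericity: we must choose $\omega$ generic among \emph{real} configurations, so that the real linear system has maximal rank. Since the locus of bad real configurations is the intersection with $(\RR^2)^{\frac{d(d+3)}{2}}$ of a complex algebraic subvariety of $(\CC^2)^{\frac{d(d+3)}{2}}$ defined over $\RR$, and since this intersection is proper (it does not contain all of $(\RR^2)^{\frac{d(d+3)}{2}}$, as one sees by exhibiting a single good real configuration, e.g.\ $\frac{d(d+3)}{2}$ points on a real smooth curve of degree $d$), it is a proper real algebraic subvariety, hence of empty interior and measure zero. That is the entire content of the obstacle, and it is routine.
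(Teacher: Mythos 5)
Your first argument is exactly the paper's proof: the paper simply observes that one can repeat the linear-algebra computation of the section on the simple enumerative problem word for word with $\RR$ in place of $\CC$, since the resolution of a linear system does not depend on the base field. Your alternative conjugation argument (deducing $\overline{C}=C$ from the uniqueness of the complex solution) is a correct and slightly different route to the same conclusion, but your main line of reasoning coincides with the paper's.
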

\begin{proof}
Nous pouvons reprendre mot pour mot la démonstration de la section
\ref{enum simple} en remplaçant $\CC$ par $\RR$. En effet, ce problème
énumératif 
se traduit en un problème d'algèbre linéaire dont la résolution ne
dépend pas du corps de base. 
\end{proof}

Mais  pour l'énumération générale des courbes algébriques
réelles nodales, savoir 
compter les courbes complexes ne nous aide pas beaucoup...

\begin{pb}\label{pb reel}
Que peut-on dire du cardinal de $\RR \mathcal C (d,g,\omega)$?
\end{pb}

Puisque ce cardinal est le nombre de racines réelles d'un polynôme
dans $\RR[X]$, on peut immédiatement dire deux choses : ce nombre est plus petit
que le  degré du polynôme et lui est
 congru modulo 2

\begin{prop}\label{encadr triv}
Pour toute configuration de points $\omega$, on a les inégalités
suivantes
$$0,1 \le \mathcal Card\left(\RR \mathcal C (d,g,\omega) \right) \le N(d,g) $$
la borne inférieure étant 1 (resp. 0) si $N(d,g)$ est impair
(resp. pair).
De plus on a 
$$\mathcal Card\left(\RR \mathcal C (d,g,\omega) \right)=N(d,g)\ mod\  2$$
\end{prop}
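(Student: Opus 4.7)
The plan is to exploit the complex conjugation action on $\mathcal C(d,g,\omega)$, which is available precisely because the configuration $\omega$ lies in $\RR^2$.

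First, I would dispose of the upper bound. Since $\RR^2\subset\CC^2$ and since any real curve is a fortiori a complex curve, we have the inclusion $\RR \mathcal C (d,g,\omega)\subset \mathcal C (d,g,\omega)$ noted in the text. Taking cardinals and invoking Proposition \ref{inv cplx} gives immediately $\mathrm{Card}(\RR \mathcal C (d,g,\omega))\le N(d,g)$.

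Next, the parity statement. I would define an involution on $\mathcal C (d,g,\omega)$ by conjugation: if $C\in \mathcal C (d,g,\omega)$ is defined by a polynomial $P(X,Y)\in \CC[X,Y]$, let $\overline C$ be the complex curve defined by $\overline P(X,Y)$, i.e.\ the polynomial obtained by conjugating each coefficient. Three facts need to be checked, each immediate: (i) $\overline C$ is still an irreducible nodal curve of degree $d$ and genus $g$, since these properties are preserved by conjugation of the coefficients; (ii) $\overline C$ still passes through every point of $\omega$, because each $p_i=(x_i,y_i)\in\RR^2$ satisfies $\overline x_i=x_i$ and $\overline y_i=y_i$, so $\overline P(x_i,y_i)=\overline{P(x_i,y_i)}=0$; and (iii) $C=\overline C$ (as subsets of $\CC^2$, hence as elements of $\mathcal C(d,g,\omega)$) if and only if $P$ can be chosen with real coefficients, which is precisely the condition that $C$ belongs to $\RR\mathcal C(d,g,\omega)$. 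Thus the involution $C\mapsto\overline C$ has fixed point set $\RR\mathcal C(d,g,\omega)$ and acts freely on the complement, which therefore has even cardinality. Writing
$$N(d,g)\;=\;\mathrm{Card}(\RR\mathcal C(d,g,\omega))\;+\;2k,$$
where $k$ is the number of conjugate pairs of non-real curves, we obtain at once the congruence $\mathrm{Card}(\RR\mathcal C(d,g,\omega))\equiv N(d,g)\pmod 2$.

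Finally, the lower bound. From the equality above, $\mathrm{Card}(\RR\mathcal C(d,g,\omega))\ge 0$ always, and if $N(d,g)$ is odd then the congruence forces $\mathrm{Card}(\RR\mathcal C(d,g,\omega))\ge 1$. There is no real obstacle in this argument; the only conceptual point is checking that conjugation is well defined on the set $\mathcal C(d,g,\omega)$, and this rests entirely on $\omega\subset\RR^2$ together with the fact that the listed numerical invariants (degree, genus, nodality, irreducibility) are preserved by coefficient conjugation.
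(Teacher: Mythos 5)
Your proof is correct and takes essentially the same approach as the paper: the paper deduces the upper bound, the parity and hence the lower bound from the fact that the solutions are the real roots of a polynomial in $\RR[X]$ of degree $N(d,g)$, and it explicitly notes that the non-real solutions pair off into complex-conjugate couples --- which is exactly the involution $C\mapsto\overline{C}$ you construct. Your version is marginally more self-contained, since you realize the conjugate pairing directly on the set of curves (checking that coefficient conjugation preserves degree, irreducibility, nodality, genus and incidence with $\omega\subset\RR^2$, with fixed locus $\RR\mathcal C(d,g,\omega)$) rather than invoking the auxiliary one-variable polynomial from the heuristic proof of the invariance of $N(d,g)$.
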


Maintenant, quelles sont les valeurs possibles du cardinal de $\RR \mathcal C
(d,g,\omega)$ entre 0 et $N(d,g)$? Si tous les polynômes de $\RR[X]$
représentaient toutes les solutions de nos problèmes énumératifs,
alors la réponse 
serait évidente : le cardinal de $\RR \mathcal C
(d,g,\omega)$ pourrait être n'importe quel nombre plus petit et de même
parité que $N(d,g)$. 

Cependant, nos polynômes à une variable proviennent de problèmes
géométriques, et ne sont donc a priori pas du tout quelconques. Nous
avons donc bien 
peu d'espoir de prouver l'énoncé ci-dessus. Et pour cause, la
Proposition \ref{W3} ci-dessous nous
montre que cet énoncé est faux! 

À l'heure actuelle, peu de choses sont connues dans l'étude du Problème
\ref{pb reel} comparé à ce que l'on sait du Problème \ref{pb1}. 
 Si l'on connaît plusieurs moyens de calculer tous les 
nombres $N(d,g)$, on ne sait à peu près rien des valeurs
possibles du cardinal de $\RR \mathcal C
(d,g,\omega)$ à part dans quelques  cas particuliers. En fait, on ne sait
répondre en toute généralité à cette question qu'en degré plus petit
que 3... Le cas des degrés 1 et 2 est couvert par la Proposition
\ref{lisse reel}, ainsi que le cas des cubiques réelles non singulières de
degré 3. Le cas des cubiques réelles rationnelles a été traité par
Degtyarev et Kharlamov il y a seulement quelques années.

\begin{prop}[Degtyarev - Kharlamov \cite{DK}]\label{W3}
Les valeurs prises par le cardinal de $\RR \mathcal C(3,0,\omega)$ quand $\omega$
parcourt les configurations génériques de 8 points dans $\RR^2$ sont
$8,10$ et $12$.
\end{prop}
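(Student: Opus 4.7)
Ma strat�gie combine trois ingr�dients : la valeur $N(3,0)=12$ (Tableau \ref{tab2}), une contrainte de parit�, et l'invariant de Welschinger $W_3$. D'apr�s la Proposition \ref{encadr triv}, on a imm�diatement $\mathcal Card(\RR \mathcal C(3,0,\omega)) \le 12$ et ce cardinal est pair, ce qui limite a priori les valeurs possibles � l'ensemble $\{0, 2, 4, 6, 8, 10, 12\}$. Pour minorer ce cardinal, j'invoquerais l'invariant de Welschinger $W_3$ introduit � la section \ref{welschi} : c'est un comptage sign� des cubiques r�elles rationnelles passant par $\omega$, ind�pendant du choix g�n�rique de $\omega$, et qui vaut $W_3 = 8$ (valeur que l'on peut obtenir par un calcul direct sur une configuration particuli�re, ou via les m�thodes tropicales pr�sent�es dans les sections ult�rieures). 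En notant $n_\pm$ les nombres de cubiques r�elles affect�es respectivement des signes $+1$ et $-1$, les relations $n_+ - n_- = 8$ et $n_+ + n_- \le 12$ imposent $n_- \in \{0,1,2\}$, d'o� $n_+ + n_- \in \{8, 10, 12\}$.

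Il reste � prouver que chacune de ces trois valeurs est effectivement atteinte. J'exhiberais alors deux configurations extr�mes, l'une r�alisant le maximum $12$ (o� les $12$ cubiques complexes sont toutes r�elles), l'autre r�alisant le minimum $8$ (o� quatre cubiques complexes forment deux paires complexes conjugu�es). La valeur interm�diaire $10$ s'en d�duit par un argument de connexit� : l'espace des configurations g�n�riques de $8$ points dans $\RR^2$ est connexe, et lorsqu'on d�forme continument $\omega$ en traversant une configuration non g�n�rique, le cardinal de $\RR \mathcal C(3,0,\omega)$ ne peut varier que par $\pm 2$, deux cubiques r�elles fusionnant en une paire complexe conjugu�e ou r�ciproquement. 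Par un th�or�me des valeurs interm�diaires ``discret'', toutes les valeurs paires entre $8$ et $12$ sont atteintes le long d'un chemin reliant ces configurations, en particulier $10$.

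Le principal obstacle est �videmment la construction explicite des deux configurations extr�mes. Atteindre la valeur $12$ peut se faire en d�g�n�rant une cubique r�elle appropri�e (par exemple en l'union de trois droites, ou d'une conique et d'une droite) et en analysant la reconstruction des $12$ cubiques par perturbation contr�l�e, � la mani�re de la note \ref{brusot}. Atteindre la valeur $8$ est beaucoup plus d�licat et constitue le c\oe ur de la preuve originale de Degtyarev et Kharlamov : rien a priori ne garantit en effet que la minoration fournie par Welschinger soit satur�e, et il faut une analyse g�om�trique fine de l'espace des cubiques r�elles singuli�res passant par $8$ points r�els pour produire une configuration tr�s sp�cifique o� exactement quatre des douze cubiques complexes ne soient pas r�elles.
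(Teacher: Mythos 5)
Le texte ne d�montre pas cette proposition : elle est cit�e comme un r�sultat de Degtyarev et Kharlamov \cite{DK}, et l'article note seulement qu'ils l'ont �tablie en calculant (avant la lettre) l'invariant $W(3)$. Il n'y a donc pas de preuve interne � laquelle comparer votre tentative ; je l'�value pour elle-m�me. La premi�re moiti� de votre argument est correcte et dans l'esprit des outils du texte : en �crivant le cardinal sous la forme $n_++n_-$ avec $n_+-n_-=W(3)=8$ et $n_++n_-\le N(3,0)=12$, on obtient $n_-\le 2$ et donc un cardinal dans $\{8,10,12\}$ (la parit� de la Proposition \ref{encadr triv} en d�coule d'ailleurs automatiquement). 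Il faut seulement prendre garde � ne pas �tre circulaire : la valeur $W(3)=8$ doit �tre obtenue ind�pendamment, ce que permet effectivement le Th�or�me \ref{FD} (trois diagrammes en �tages de multiplicit� r�elle $1$, $5$, $3$ et $0$ marquages pour celui de multiplicit� paire), alors qu'historiquement c'est la Proposition \ref{W3} qui a servi � calculer $W(3)$.

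La lacune r�elle est dans la seconde moiti� : vous ne construisez pas les configurations r�alisant $8$ et $12$, vous annoncez seulement que vous le feriez, et vous reconnaissez vous-m�me que c'est l� le c\oe ur de la preuve. Or sans ces deux configurations extr�males, rien n'est d�montr� au-del� de l'inclusion des valeurs possibles dans $\{8,10,12\}$ ; en particulier l'argument de connexit� pour atteindre $10$ (correct dans son principe : un chemin g�n�rique de configurations traverse transversalement les murs du discriminant, et chaque travers�e change le nombre de solutions r�elles de $0$ ou $\pm 2$) est suspendu � l'existence pr�alable de configurations aux deux extr�mit�s. Notez que la machinerie du texte fournit en fait une piste plus concr�te que la perturbation de cubiques d�g�n�r�es que vous sugg�rez : dans la correspondance de Mikhalkin (Th�or�me \ref{corr 2}), les huit cubiques tropicales de multiplicit� complexe $1$ de la Figure \ref{Tcub0} ont chacune exactement une pr�image r�elle, et la cubique de multiplicit� complexe $4$ et de multiplicit� r�elle $0$ a, selon le choix des signes dans la construction de $\omega_0$, soit $0$ soit $2$ soit $4$ pr�images r�elles ; c'est pr�cis�ment ce degr� de libert� qui r�alise les trois valeurs $8$, $10$ et $12$. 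Ce compl�ment d�passe ce qui est �nonc� dans le texte, mais c'est lui qui manque pour que votre plan devienne une preuve.
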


La Proposition \ref{W3} fait apparaître le  phénomène 
diablement intéressant suivant : quel
que soit le choix de nos 8 points dans $\RR^2$, il existe toujours au
moins 8 cubiques rationnelles réelles passant par ces 8 points! Ainsi,
la borne inférieure 0 de la Proposition \ref{encadr triv} est loin
d'être optimale. Par contre, on constate aussi qu'il existe des
configurations de 8 points pour lesquelles les 12 cubiques rationnelles
complexe sont en faite réelles. Est ce là un fait général? Peut on
améliorer les deux bornes de la Proposition \ref{encadr triv}? Les
problèmes suivants sont  des versions simplifiées du problème \ref{pb reel}

\begin{pb}\label{min reel}
Quelle est la valeur minimale du cardinal de $\RR \mathcal C(d,g,\omega)$ 
quand $\omega$ varie?
\end{pb}

\begin{pb}\label{max reel}
Quelle est la valeur maximale du cardinal de $\RR \mathcal C(d,g,\omega)$ 
quand $\omega$ varie?
\end{pb}

Pour l'instant, ces deux problèmes sont encore largement ouverts. Par
exemple, personne à ma connaissance ne sait s'il existe une
configuration de 11 points dans $\RR^2$ telle que les 620 courbes
algébriques complexes rationnelles de degré 4 passant par ces points
soient toutes réelles.
Vers 2002, Welschinger a proposé une recette pour associer un signe
$\pm 1$ à toute courbe
algébrique 
réelle 
 et a montré un résultat surprenant : si on
compte les courbes rationnelles de cette manière, alors le résultat ne
dépend pas de la configuration $\omega$ choisie. Encore plus intéressant, cet
invariant donne une nouvelle borne inférieure pour le cardinal de $\RR \mathcal
C(d,0,\omega)$!

\subsection{Invariants de Welschinger}\label{welschi}

Nous avons déjà mentionné à la section \ref{courbe rat} que les
courbes rationnelles sont très spéciales parmi toutes les courbes
algébriques, et que l'on disposait de  techniques
particulières pour les étudier. Nous allons ici expliquer comment compter
``correctement'' les courbes réelles rationnelles pour que le résultat ne
dépende pas de la configuration $\omega$ choisie. Nous obtiendrons
ainsi les \textit{invariants de Welschinger}.

Nous avons vu à la section \ref{reel} qu'il
existe deux types  de points doubles réels pour une
courbe algébrique réelle, les points doubles isolés et les points
doubles non isolés. Un point double isolé est le point d'intersection de deux
branches complexes conjuguées de la courbe. Un point double non isolé
est le point 
d'intersection de deux branches réelles. 
\begin{defi}
La masse d'une courbe algébrique
réelle $C$, notée $m(C)$, est le nombre de points doubles isolés de $C$.
\end{defi}

\begin{exe}
La cubique d'équation $Y^2-X^2(X+1)=0$ (voir Figure \ref{cub 2}c) est
de masse 0, et la cubique 
d'équation $Y^2-X^2(X+1)=0$ (voir Figure \ref{pt 2 reel}c) est de masse 1. 
\end{exe}

Soit $\omega$ une configuration générique de $3d-1$ points dans
$\RR^2$ et posons
$$W(d)= \sum_{C\in \RR \mathcal C(d,0,\omega)}
(-1)^{m(C)}$$

Le résultat suivant est un des théorèmes majeurs en géométrie
énumérative réelle de ces dernières années.

\begin{thm}[Welschinger \cite{Wel1}]
Pour tout $d\ge 1$, le nombre $W(d)$ ne dépend pas de $\omega$.
\end{thm}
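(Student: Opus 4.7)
La strat�gie est un argument standard de \emph{traves�e de mur} dans l'espace $\Omega \simeq (\RR^2)^{3d-1}$ des configurations de $3d-1$ points de $\RR^2$. Notons $\Omega_{\mathrm{gen}}$ l'ouvert des configurations g�n�riques et $\Delta = \Omega \setminus \Omega_{\mathrm{gen}}$ le \emph{discriminant}, qui est une hypersurface alg�brique r�elle. Pour deux configurations g�n�riques arbitraires $\omega_0, \omega_1$, on choisit un chemin $\{\omega_t\}_{t \in [0,1]}$ qui les relie et qui est suffisamment g�n�rique pour ne rencontrer $\Delta$ qu'en un nombre fini de valeurs de $t$, chacune correspondant � une strate lisse de codimension 1 de $\Delta$. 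L'objectif est de v�rifier que $W(d)$ est inchang� � chaque traves�e.

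Pour $t$ g�n�rique, l'ensemble $\RR \mathcal C(d,0,\omega_t)$ varie par une d�formation lisse, au cours de laquelle chaque courbe rationnelle r�elle se d�forme continuement et conserve son nombre de points doubles isol�s : la masse $m(C)$ est donc localement constante. Le nombre $W(d)$ est ainsi localement constant sur chaque composante connexe de $\Omega_{\mathrm{gen}}$. La preuve se r�duit donc � analyser le comportement de $W(d)$ quand $\omega_t$ traverse une strate de codimension 1. Ces strates correspondent essentiellement � deux types d'�v�nements : (A) deux courbes rationnelles r�elles distinctes de $\RR \mathcal C(d,0,\omega_t)$ se collisionnent en une unique courbe r�elle $C_0$, puis disparaissent comme paire complexe conjugu�e de l'autre c�t� du mur (ou inversement) ; (B) une courbe rationnelle r�elle persistante acquiert une singularit� r�elle suppl�mentaire (cusp, tacnode, ou point triple) au moment exact de la traves�e.

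Le cas (A) est le plus d�licat. L'observation cl� est que les deux courbes r�elles qui se collisionnent ont des masses de parit� \emph{oppos�e} : au voisinage imm�diat de la collision, elles se distinguent l'une de l'autre par le type (isol� ou non isol�) d'un point double r�el situ� pr�s du lieu de fusion, et ce point double d�g�n�re en une singularit� non-Morse au moment pr�cis de la traves�e. Leurs contributions $(-1)^{m(C)}$ s'annulent donc deux � deux, et comme une paire complexe conjugu�e ne contribue pas � $W(d)$, la somme est pr�serv�e. Le cas (B) s'�tudie localement � partir de la d�formation universelle de la singularit� en jeu : par exemple, un cusp r�el admet une d�formation r�elle � un param�tre qui donne, d'un c�t� du mur, une courbe localement lisse et, de l'autre, une courbe avec un point double r�el d'un type bien d�termin�. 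La courbe globale correspondante persiste donc de chaque c�t� du mur, et l'on v�rifie que sa masse ne change pas (car seule une des deux transitions apparait r�ellement). L'analyse est analogue pour les tacnodes et les points triples.

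La principale difficult� technique r�side dans cette analyse locale cas par cas des murs, et plus profond�ment dans le fait crucial suivant : un \emph{changement de type} d'un point double r�el (isol� vers non isol�, ou inversement) ne peut se produire qu'� travers une singularit� strictement plus d�g�n�r�e (typiquement un cusp), ce qui, coupl� aux annulations du cas (A), garantit l'invariance du signe. �tablir la classification compl�te des strates de codimension 1 du discriminant, et v�rifier de mani�re exhaustive que chacune pr�serve la somme sign�e, constitue le c\oe ur de la preuve originale de Welschinger.
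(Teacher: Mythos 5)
Le texte ne d\'emontre pas ce th\'eor\`eme : il l'\'enonce comme un r\'esultat de Welschinger en renvoyant \`a \cite{Wel1}, si bien qu'il n'y a pas de preuve interne \`a laquelle comparer la v\^otre.

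Sur le fond, votre strat\'egie de travers\'ee de murs est la bonne intuition (c'est l'esprit de la preuve originale, men\'ee en fait dans le cadre symplectique, via les courbes pseudo-holomorphes et la compacit\'e de Gromov), mais votre texte reste un plan dont le contenu math\'ematique essentiel est pr\'ecis\'ement ce que vous reportez \`a la fin. Trois lacunes concr\`etes. Premi\`erement, l'affirmation clef --- les deux courbes r\'eelles qui entrent en collision ont des masses de parit\'es oppos\'ees --- est en substance le th\'eor\`eme lui-m\^eme ; il faut la d\'emontrer par l'\'etude des d\'eformations \'equig\'en\'eriques r\'eelles d'un cusp ($Y^2=X^2(X+a)$ donne un point double non isol\'e pour $a>0$ et isol\'e pour $a<0$), puis refaire ce travail pour la tacnode et le point triple, o\`u le bilan des masses modulo 2 est nettement plus d\'elicat ; votre dichotomie (A)/(B) ne co\"{\i}ncide d'ailleurs pas avec la structure r\'eelle des murs, puisque la naissance ou la mort d'une paire de courbes r\'eelles se produit exactement le long de ces lieux singuliers, et non comme un \'ev\'enement ind\'ependant. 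Deuxi\`emement, votre liste de murs est incompl\`ete : il faut aussi traiter les courbes r\'eductibles apparaissant comme limites de courbes rationnelles irr\'eductibles, ainsi que les configurations pour lesquelles un point double de la courbe vient co\"{\i}ncider avec un point de $\omega$. Troisi\`emement, l'existence m\^eme d'un chemin ne rencontrant que ces strates-l\`a, en des temps isol\'es et transversalement, repose sur des \'enonc\'es de dimension pour les strates du discriminant (codimension au moins 2 des d\'eg\'en\'erescences plus profondes) que vous ne justifiez pas. Sans ces v\'erifications, l'invariance de la somme sign\'ee n'est pas \'etablie.
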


Pour nous, l'intérêt principal  de l'invariant $W(d)$ est qu'il donne une borne
inférieure pour le cardinal de $\RR \mathcal C(d,0,\omega)$ potentiellement
meilleure que celle de la Proposition \ref{encadr triv}.

\begin{prop}
Pour tout $d\ge 1$ et toute configuration $\omega$ générique de $3d-1$
points dans $\RR^2$, on a
$$|W(d)|\le \mathcal Card\left(\RR \mathcal C(d,0,\omega)\right) $$ 
\end{prop}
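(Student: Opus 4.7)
L'approche est essentiellement une application de l'in�galit� triangulaire, combin�e avec l'invariance de $W(d)$ fournie par le th�or�me de Welschinger.

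Fixons une configuration g�n�rique $\omega$ de $3d-1$ points dans $\RR^2$. Par d�finition,
$$W(d) = \sum_{C \in \RR\mathcal{C}(d,0,\omega)} (-1)^{m(C)}.$$
Chaque terme de cette somme est $\pm 1$. Par l'in�galit� triangulaire, la valeur absolue de la somme est au plus la somme des valeurs absolues, soit
$$|W(d)| \le \sum_{C \in \RR\mathcal{C}(d,0,\omega)} |(-1)^{m(C)}| = \sum_{C \in \RR\mathcal{C}(d,0,\omega)} 1 = \mathcal{Card}\left(\RR\mathcal{C}(d,0,\omega)\right).$$

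Le point crucial est que le membre de gauche $|W(d)|$ ne d�pend pas de $\omega$ grace au th�or�me de Welschinger cit� juste avant, alors que le membre de droite, lui, d�pend a priori de $\omega$. C'est pr�cis�ment cette dissym�trie qui fait de $|W(d)|$ une borne inf�rieure valable \emph{pour toute} configuration g�n�rique $\omega$, et c'est ce qui rend l'�nonc� int�ressant : si $|W(d)|$ est non nul, alors il existe n�cessairement au moins $|W(d)|$ cubiques r�elles rationnelles passant par n'importe quelle configuration g�n�rique de $3d-1$ points, am�liorant potentiellement la borne triviale $0$ ou $1$ de la Proposition \ref{encadr triv}.

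Il n'y a donc pas de v�ritable obstacle dans cette preuve : tout le travail a �t� fait dans le th�or�me pr�c�dent de Welschinger. Cette proposition n'est qu'un corollaire formel, mais c'est elle qui donne tout le sens ``�num�ratif'' � l'invariant $W(d)$.
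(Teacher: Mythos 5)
Votre preuve est correcte et suit exactement la même démarche que celle du texte : la proposition découle immédiatement du fait que les courbes de $\RR \mathcal C(d,0,\omega)$ sont comptées avec un signe $\pm 1$, ce que vous formalisez via l'inégalité triangulaire. Votre remarque sur la dissymétrie entre le membre de gauche (indépendant de $\omega$) et le membre de droite (dépendant de $\omega$) est une observation pertinente, mais elle ne fait pas partie de la preuve elle-même.
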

\begin{proof}
La proposition découle immédiatement du fait que les courbes de $\RR
\mathcal C(d,0,\omega)$ sont comptées avec un signe $\pm 1$.
\end{proof}

Donc si nous arrivons à montrer que les nombres $W(d)$ ne sont pas
nuls, nous aurons accompli un progrès dans l'étude du Problème
\ref{min reel}! On voit facilement avec la Formule de Kontsevich
(Théorème \ref{Kont}) que les nombres $N(d,0)$
sont pairs dès que $d\ge 3$, de sorte que nous ne pouvons même pas affirmer
immédiatement que par $3d-1$ point génériques de $\RR^2$ passe au
moins une courbe algébrique réelle rationnelle de degré $d$.

\begin{pb}
Les nombres $W(d)$ sont ils nuls?
\end{pb}

Welschinger a montré que les nombres $W(d)$ ne dépendent pas de la
configuration $\omega$ choisie, mais il restait à les calculer. Dans
son article original, Welschinger a seulement pu démontrer que $W(d)$
était non nul pour $d\le 5$.
 Ce n'est que grâce à la géométrie
tropicale que les
invariants de Welschinger ont pu être calculés pour la première fois
par Mikhalkin dans \cite{Mik1}. En se basant sur les travaux de Mikhalkin, 
 Itenberg, Kharlamov et Shustin (voir \cite{IKS1}) ont  donné une
 borne inférieur non triviale pour les nombres $W(d)$ et
 ont ainsi établi l'existence d'une courbe
algébrique réelle rationnelle de degré $d$ passant par n'importe quelle
configuration de $3d-1$ points réels. Un peu plus tard, ils ont montré
que les invariants de Welschinger satisfont une formule de type
Caporaso-Harris (voir \cite{IKS3}). Encore une fois, cette formule est
assez compliquée, nous ne la donnerons pas ici. Nous proposons
toutefois au Lecteur d'écrire lui même cette formule à l'Exercice
\ref{exo CH reel}.

Les premières valeurs de $W(d)$ sont données dans le tableau
\ref{W}. Notons que pour démontrer la Proposition \ref{W3}, Degtyarev et
Kharlamov avaient en fait calculé $W(3)$, et ce avant que
Welschinger ait défini ses invariants. 
Les valeurs de $W(d)$ pour $d\ge 4$ sont dues
à Itenberg, Kharlamov et Shustin. Ces derniers ont aussi étudié
quelques propriétés de 
la suite  $(W(d))_{d\ge 1}$. Nous verrons ces
résultats plus en détail à la section \ref{appl reel}.

La Proposition \ref{W3} nous dit que $W(3)$ est la borne inférieure
optimale pour $Card\left(\RR \mathcal C(3,0,\omega)\right) $.
En utilisant  les diagrammes en
étages (voir section \ref{etage}), Rey a montré que l'invariant de
Welschinger est aussi optimal en degré 4.

\begin{table}[!h]
$$\begin{array}{|c|c|c|c|c|c|c|c|c|}
\hline d &\hspace{4ex} 1\hspace{4ex} & \hspace{4ex}
2 \hspace{4ex}& \hspace{4ex}3\hspace{4ex}  &\hspace{4ex}
4 \hspace{4ex}&\hspace{4ex} 5\hspace{4ex}  &\hspace{4ex} 6\hspace{4ex}
&\hspace{4ex}  7\hspace{4ex}
\\ \hline
W(d) & 1 &1  & 8&
240
&  18 264 &2845440 & 792731520
\\\hline
\end{array}$$
\caption{Premières  valeurs de $W(d)$}
\label{W}
\end{table}

\begin{prop}[Rey]
Il existe une configuration générique de 11 points dans $\RR^2$  par
laquelle passent
exactement $240$ courbes algébriques réelles rationnelles de degré 4.
\end{prop}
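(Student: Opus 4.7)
Le plan est d'utiliser les diagrammes en étages de la section \ref{etage}, qui permettent d'énumérer à la fois les courbes rationnelles complexes et réelles de degré $4$ passant par une configuration $\omega$ de $11$ points suffisamment verticalement étirée dans $\RR^2$. Pour une telle configuration, à chaque diagramme en étages $D$ de degré $4$ et de genre $0$ sont associées une multiplicité complexe $\mu^{\CC}(D) \in \NN$ et une multiplicité de Welschinger $\mu^{\RR}(D) \in \ZZ$, satisfaisant
\begin{equation*}
\sum_D \mu^{\CC}(D) = N(4,0) = 620 \qquad \text{et} \qquad \sum_D \mu^{\RR}(D) = W(4) = 240.
\end{equation*}
De plus, les $\mu^{\CC}(D)$ courbes complexes provenant de $D$ se répartissent en courbes réelles (chacune contribuant à $\mu^{\RR}(D)$ par le signe $(-1)^{m(C)}$) et en paires de courbes complexes conjuguées.

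Je commencerais par fixer explicitement une configuration $\omega$ verticalement étirée, puis par énumérer à la main la (courte) liste des diagrammes en étages de degré $4$ et de genre $0$ passant par $\omega$, en calculant pour chacun ses deux multiplicités. En degré $4$, la combinatoire reste modeste et cette énumération est faisable. L'étape suivante consiste à décrire, diagramme par diagramme, les $\mu^{\CC}(D)$ relèvements en courbes algébriques, en déterminant lesquels sont réels, et la parité de la masse de chacun d'eux.

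Le point essentiel de la preuve est alors de montrer que l'on peut ajuster finement le choix de $\omega$ pour que, quel que soit le diagramme $D$, les courbes réelles parmi ses relèvements aient toutes une masse de même parité. Sous cette hypothèse, chaque courbe réelle contribuerait $+1$ à $W(4)$, et l'on aurait
\begin{equation*}
\mathcal Card\left(\RR \mathcal C(4,0,\omega)\right) \;=\; \sum_D \mu^{\RR}(D) \;=\; W(4) \;=\; 240.
\end{equation*}
L'ensemble des $620$ courbes rationnelles complexes passant par $\omega$ se répartirait ainsi en $240$ courbes réelles et $190$ paires de courbes complexes conjuguées, saturant la borne inférieure fournie par l'invariant de Welschinger.

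La principale difficulté est précisément ce contrôle des signes : il faut exclure, pour la configuration $\omega$ retenue, l'apparition d'une courbe réelle rationnelle de degré $4$ dont la masse aurait la parité \emph{anormale}, car une telle courbe produirait une annulation dans la somme signée définissant $W(4)$, interdisant la saturation de la borne. Ce contrôle fin repose sur une analyse combinatoire diagramme par diagramme des relèvements réels, exploitant la latitude donnée par le choix précis de $\omega$ (par exemple les positions relatives et les parités des coordonnées des points), et constitue le véritable travail technique de la démonstration.
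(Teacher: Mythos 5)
Le texte n'inclut pas de d\'emonstration de cette proposition : il l'attribue \`a Rey en indiquant seulement qu'elle s'obtient \`a l'aide des diagrammes en \'etages. Votre strat\'egie est donc exactement celle que le texte sugg\`ere, et votre r\'eduction logique est correcte : puisque $|W(4)|\le \mathcal Card\left(\RR\mathcal C(4,0,\omega)\right)$, il suffit d'exhiber une configuration $\omega$ pour laquelle toute courbe r\'eelle rationnelle de degr\'e $4$ passant par $\omega$ est de masse paire (ou, diagramme par diagramme, pour laquelle le nombre de rel\`evements r\'eels de chaque courbe tropicale co\"incide avec sa multiplicit\'e r\'eelle $\mu_\RR\in\{0,1\}$), ce qui donne $\mathcal Card\left(\RR\mathcal C(4,0,\omega)\right)=\sum_{\mathcal D}\mu_\RR(\mathcal D)=W(4)=240$.

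Il y a cependant un vrai trou : toute la substance de la preuve est renvoy\'ee \`a plus tard. Vous n'\'enum\'erez pas les diagrammes en \'etages de degr\'e $4$ et de genre $0$, vous ne d\'ecrivez pas leurs rel\`evements r\'eels, et surtout vous n'exhibez ni ne construisez la configuration $\omega$ dont l'existence est pr\'ecis\'ement l'\'enonc\'e \`a d\'emontrer ; vous constatez vous-m\^eme que ce contr\^ole \og constitue le v\'eritable travail technique \fg{} sans l'effectuer. De plus, les outils fournis par le texte ne suffisent pas \`a mener ce programme \`a bien : les Th\'eor\`emes \ref{corr 2} et \ref{FD} ne donnent que la somme \emph{sign\'ee} des rel\`evements r\'eels de chaque courbe tropicale, pas leur nombre ni leurs signes individuels. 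Pour d\'ecider combien de rel\`evements r\'eels poss\`ede chaque courbe tropicale et quelle est la parit\'e de la masse de chacun, il faut une version raffin\'ee de la correspondance de Mikhalkin (description explicite des courbes r\'eelles proches de la limite tropicale, sommet par sommet, en fonction de donn\'ees de signes attach\'ees \`a $\omega$), ingr\'edient que votre plan ne nomme pas et qui est le c\oe ur de l'argument de Rey. En l'\'etat, votre texte identifie correctement la bonne approche mais ne d\'emontre pas la proposition.
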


À partir du degré 5, le problème de l'optimalité de $W(d)$ est toujours ouvert.

\begin{pb}
Existe-t-il toujours une configuration générique $\omega$ de $3d-1$
points dans $\RR^2$ telle que le cardinal de $\RR \mathcal C(d,0,\omega)$ soit exactement $W(d)$? 
\end{pb}

\begin{rem}
Rien ne nous empêche de compter les courbes de  $\RR \mathcal
C(d,g,\omega)$ pour tout genre $g$ avec les mêmes signes que pour les
courbes rationnelles. Malheureusement, les nombres obtenus ne sont pas
invariants 
par rapport à $\omega$ dès que $g\ge 1$ (voir \cite{Wel1} ou \cite{IKS1})...
\end{rem}

\section{Géométrie énumérative  tropicale}\label{trop}
La géométrie tropicale est une géométrie algébrique comme aux sections
\ref{alg cplx} et \ref{enum reel}, mais au lieu de se fixer un corps
de base, on se 
fixe à la place un \textit{semi-corps} (c'est-à-dire que nous ne
disposons pas de la soustraction) de base. Les objets tropicaux que l'on
obtient ont alors une nature combinatoire, ce qui facilite la
résolution de certains problèmes. Par exemple, la géométrie
énumérative tropicale est beaucoup plus simple que ses homologues
réelle ou complexe! La suite de ce texte est destinée à convaincre le
Lecteur que la géométrie
énumérative tropicale, c'est facile.

L'intérêt majeur de la géométrie tropicale est qu'elle permet de
résoudre des problèmes 
de géométrie classique. Les Théorèmes de Correspondance de Mikhalkin
que nous verrons à la section \ref{corr} en sont de beaux exemples :
compter des courbes tropicales revient à compter des courbes algébriques.

\subsection{Courbes tropicales}
Par soucis de concision, nous n'adopterons pas ici le point de vue
\textit{géométrie algébrique sur le semi-corps tropical}.
Nous nous bornerons à donner les définitions ad hoc dont nous aurons
besoin dans la suite 
de ce texte.  
Pour nous,  une courbe tropicale est un graphe équilibré dans $\RR^2$
dont les arêtes sont des intervalles à pente rationnelle et auxquelles
nous attribuons un nombre entier appelé \textit{poids}.

Il est possible d'unifier les concepts de cette section avec ceux de la
section \ref{alg cplx} où les courbes algébriques sont définies
comme le lieu d'annulation de polynômes. Il existe une notion de
polynôme
tropical et de son lieu d'annulation, et on peut ainsi
 définir une courbe tropicale comme à la section \ref{alg cplx}. 
Cette définition ``algébrique''  de courbe tropicale est
équivalente à celle que nous donnons ici,
nous renvoyons le Lecteur au cours d'Ilia Itenberg dans \cite{BIT} ou
aux textes \cite{Mik3} et \cite{Mik9}.

\begin{defi}\label{rect}
Un graphe rectiligne à pentes rationnelles $\Gamma$ dans $\RR^2$ est la donnée d'un ensemble
fini de points $\Gamma_0$ de $\RR^2$, et d'un ensemble fini $\Gamma_1$
d'intervalles (non nécessairement bornés) de $\RR^2$  tels que
\begin{itemize}
\item[$\bullet$] chaque intervalle de $\Gamma_1$ est contenu dans
  une droite d'équation $aX+bY+c=0$ avec $a$ et $b$ dans $\ZZ$, 

\item[$\bullet$] les extrémités de chaque intervalle de $\Gamma_1$
  sont dans $\Gamma_0$,
\item[$\bullet$] l'intersection de deux éléments de $\Gamma_1$ est soit
  vide, soit un point de $\Gamma_0$,

\item[$\bullet$] tout point dans $\Gamma_0$ est une extrémité d'au
  moins 3 intervalles de $\Gamma_1$.
\end{itemize}

Les points de $\Gamma_0$ sont appelés les sommets de $\Gamma$,  les
intervalles de $\Gamma_1$ sont appelés les arêtes de $\Gamma$. Si un
sommet $s$ est une extrémité d'une arête $a$, on dit que le sommet
$s$ est adjacent à l'arête $a$. Un sommet adjacent à 3 (resp. 4)
arêtes est appelé un sommet trivalent (resp. quadrivalent).

\end{defi}

\begin{exe}
Des exemples de graphes rectilignes à pentes
rationnelles sont représentés sur la Figure \ref{graph}. Le graphe de
la Figure \ref{graph}a (resp. \ref{graph}b, c et d) a 5 (resp. 3,
1 et 4)
sommets et 8 (resp. 10, 3 et 9) arêtes. 
\end{exe}

\begin{figure}[h]
\begin{center}
\begin{tabular}{ccc}
\includegraphics[height=3cm, angle=0]{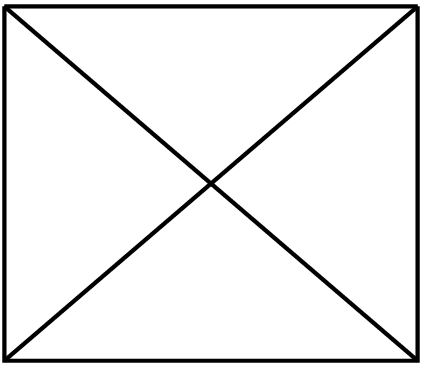}&
\hspace{10ex}  &
\includegraphics[height=3cm, angle=0]{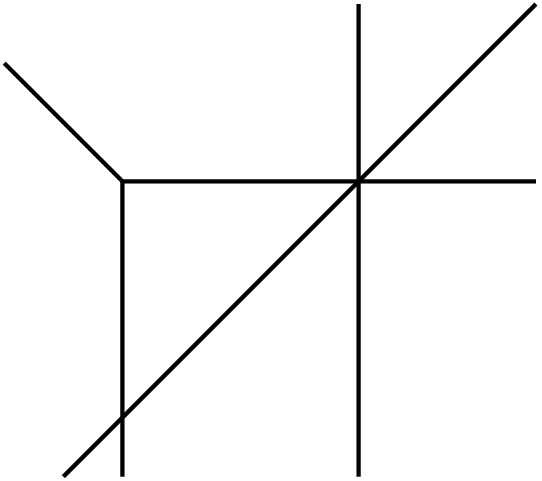}
\\ a) && b)

\\
\\\includegraphics[height=4cm, angle=0]{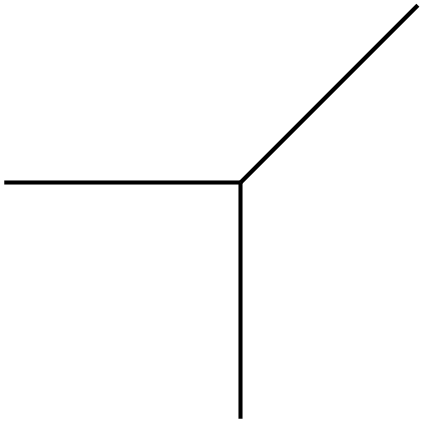}&
\hspace{10ex}  &
\includegraphics[height=4cm, angle=0]{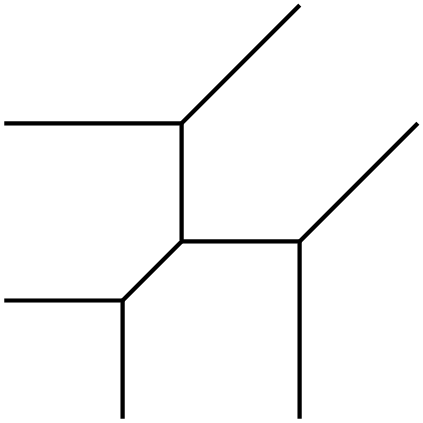}

\\c)&&d)
\end{tabular}
\end{center}
\caption{Graphes rectilignes à pentes
rationnelles}
\label{graph}
\end{figure}

Puisque les sommets d'un graphe rectiligne à pentes rationnelles $
\Gamma$ dans $\RR^2$
sont tous adjacents à au moins trois arêtes et que deux
arêtes ne peuvent s'intersecter qu'en un sommet, l'ensemble
$\Gamma_0$ est entièrement déterminé par l'union dans $\RR^2$ des arêtes
de $\Gamma$.  
Pour éviter les détails trop techniques et peu intéressants, et bien
que cela ne soit pas tout à fait honnête, nous confondrons dans la
suite un graphe rectiligne à pentes rationnelles et
l'union de ses arêtes   dans $\RR^2$.

Soit $s$ un sommet adjacent à une arête $a$. Comme l'arête $a$ est
inclue dans une droite $L$ dont la direction est d'équation à
coefficients dans $\ZZ$, il existe 
un vecteur $\vec v$ dans $\ZZ^2$ tel que $s+\vec v$ soit dans $L$. De
plus, si nous demandons que les coordonnées de $\vec v$ soient premières
entre elles, il n'y a plus que deux choix possibles pour $\vec v$ 
suivant sa direction. Nous pouvons fixer cette direction en demandant 
que le point $s+\epsilon \vec v$ soit dans $a$ pour $\epsilon$ un
nombre réel positif assez
petit. Nous avons ainsi défini de manière unique un vecteur $\vec v$ dans
$\ZZ$ à partir de $s$ et de $a$.

\begin{defi}
Le vecteur $\vec v$ est appelé le vecteur primitif sortant du sommet $s$
suivant l'arête $a$.
\end{defi}

\begin{exe}
Si le sommet $s$ du graphe de la Figure \ref{graph}c est le points $(0,0)$,
alors les trois arêtes sont contenues dans les trois droites
d'équations $Y=0$, $X=0$ et $X-Y=0$. Les trois vecteurs primitifs
sortant de $s$ sont donc les vecteurs $(-1,0)$, $(0,-1)$ et $(1,1)$.
\end{exe}

Pour définir une courbe tropicale, il nous faut introduire la notion
de \textit{poids} sur les arêtes.

\begin{defi}
Un graphe pondéré dans $\RR^2$ est un graphe
rectiligne à pentes rationnelles $\Gamma$ muni d'une fonction
  $w : \Gamma_1\to \NN^*$.
\end{defi}

Dans les dessins de graphes pondérés, on ne précise le poids d'une
arête uniquement si celui ci est au moins 2.

\begin{exe}
Les quatre graphes de la Figure \ref{graph} sont donc des graphes
pondérés, chaque arête étant implicitement de poids 1. Les graphes de
la Figure \ref{courbe trop} sont aussi des graphes pondérés, le graphe
pondéré de la Figure 
\ref{courbe trop}b contient une arête de poids 2.
\end{exe}

Nous avons maintenant tout ce qu'il faut pour définir une courbe
tropicale, qui est un graphe  pondéré
et \textit{équilibré}.

\begin{defi}
Une courbe tropicale $C$ dans $\RR^2$ est un graphe rectiligne à
pentes rationnelles $\Gamma$ pondéré par $w$  qui vérifie la condition
d'équilibre en chacun de ses sommets :
pour tout sommet $s$ de $\Gamma$ adjacent aux arêtes $a_1,\ldots,a_k$
de  vecteurs primitifs sortants correspondants $\vec v_1,\ldots,\vec
v_k$, on a
$$\sum_{i=1}^k w(a_i)\vec v_i=\vec 0 $$
\end{defi}

La condition d'équilibre est bien connue en électricité sous le nom de
``Loi de Kirchoff''. 

\begin{exe}
Le graphe pondéré de la Figure \ref{graph}c est une courbe
tropicale. En effet, les trois vecteurs primitifs sortant du sommet
sont $(-1,0)$, $(0,-1)$ et $(1,1)$ et les poids  sont
tous 1, donc la somme est bien le vecteur nul.
\end{exe}

\begin{exe}
En revanche, le graphe pondéré de la Figure \ref{graph}a n'est pas une
courbe tropicale. Les autres graphes pondérés des 
Figures \ref{graph} et
\ref{courbe trop} sont des courbes tropicales.
\end{exe}

\begin{figure}[h]
\begin{center}
\begin{tabular}{ccc}
\includegraphics[height=3.5cm, angle=0]{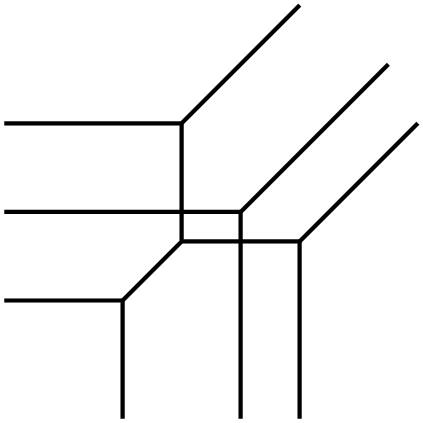}&
\hspace{10ex}  &
\includegraphics[height=3cm, angle=0]{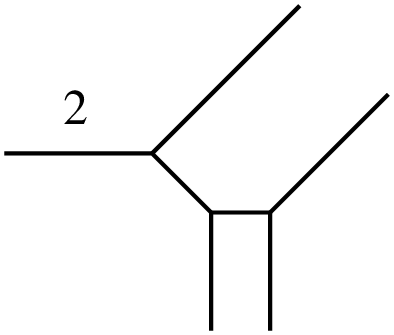}
\\ a) && b) 

\\
\\\includegraphics[height=4cm, angle=0]{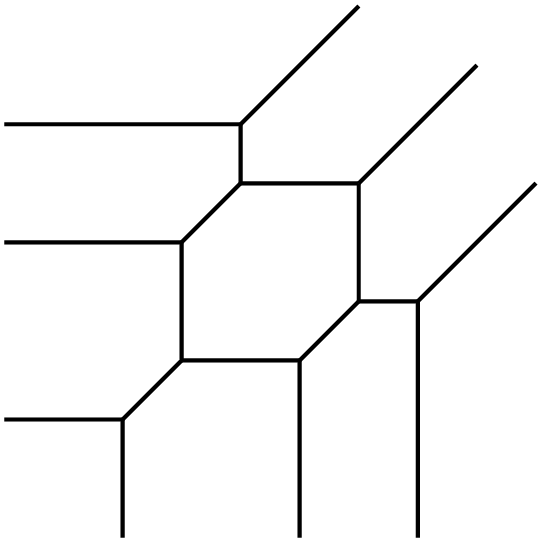}&
\hspace{10ex}  &
\includegraphics[height=4cm, angle=0]{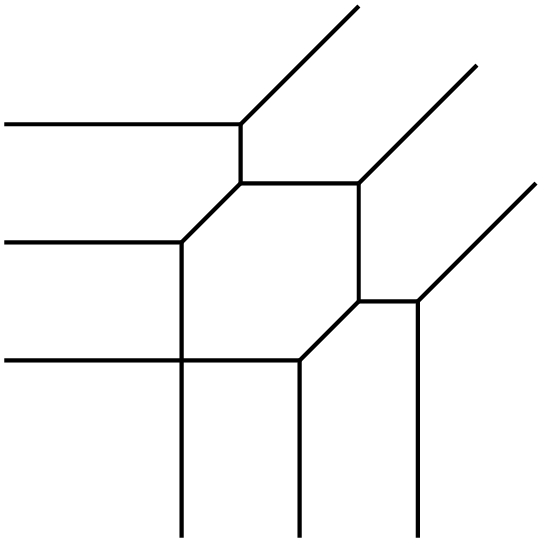}

\\c)&&d)
\end{tabular}
\end{center}
\caption{Quelques courbes tropicales}
\label{courbe trop}
\end{figure}

\begin{rem}
En raison de la condition
d'équilibre, une courbe tropicale n'est \textit{jamais} compacte. 
\end{rem}

Comme dans le cas des courbes algébriques, nous pouvons définir les
courbes tropicales irréductibles, le
degré d'une courbe tropicale et le genre d'une courbe tropicale
nodale.

\begin{defi}
Une courbe tropicale est irréductible si elle n'est pas l'union de
deux courbes tropicales.
\end{defi}

\begin{exe}
Les courbes tropicales des Figures \ref{graph}c, d et \ref{courbe
  trop}b, c et d sont irréductibles. Les courbes tropicales des
Figures \ref{graph}b et  \ref{courbe
  trop}a sont réductibles. Par exemple la courbe tropicale de la
Figure  \ref{courbe
  trop}a est l'union des courbes tropicales des Figures \ref{graph}c
et d. 
\end{exe}

\begin{defi}
Une courbe tropicale $C$ est de degré $d$ si pour chacune des
directions $(-1,0)$, $(0,-1)$ et $(1,1)$, la somme des poids des
arêtes non bornées de $C$ partant à l'infini dans cette direction est
$d$, et si $C$ n'a pas d'autres arêtes non bornées.  
\end{defi}

\begin{exe}
La courbe tropicale de la Figure \ref{graph}c est de degré 1.
 Les courbes tropicales des
Figures \ref{graph}d et  \ref{courbe
  trop}b sont de degré 2. Les courbes tropicales des
 Figures \ref{courbe
  trop}a, c et d sont de degré 3.
\end{exe}

\begin{rem}
Il se peut donc avec cette définition qu'une courbe tropicale n'ait
pas de degré, comme par exemple celle de le Figure \ref{graph}b. Cela
est dû  à la ``mauvaise'' définition de courbe tropicale que
nous adoptons ici. Si l'on voit une courbe tropicale comme l'ensemble
des zéros tropicaux d'un polynôme tropical, alors cette difficulté
disparaît : le degré de la courbe est simplement le degré du polynôme,
comme en géométrie classique. La courbe tropicale de la Figure
\ref{graph}b est alors de degré 4.
\end{rem}

Le nombre maximal de sommets et d'arêtes d'une courbe tropicale
s'exprime facilement en fonction du degré de la courbe. Nous aurons
besoin dans la suite de la proposition suivante.  

\begin{prop}\label{max som}
Une courbe tropicale de degré $d$ a au plus $d^2$ sommets.
\end{prop}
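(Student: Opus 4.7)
L'approche naturelle que je suivrais est celle de la subdivision duale du polygone de Newton. � une courbe tropicale $C$ de degr� $d$ dans $\RR^2$ est associ�e une subdivision du triangle $\Delta_d=\mathrm{conv}\{(0,0),(d,0),(0,d)\}$ en polygones � sommets entiers, dans laquelle les sommets de $C$ correspondent bijectivement aux cellules de dimension $2$, les ar�tes born�es de $C$ aux ar�tes int�rieures, et les ar�tes non born�es de $C$ aux ar�tes du bord de $\Delta_d$. Sous cette bijection, majorer le nombre de sommets de $C$ revient � majorer le nombre de polygones d'int�rieur non vide figurant dans la subdivision.

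La mise en place de cette dualit� est la premi�re �tape. En chaque sommet $s$ de $C$ adjacent aux ar�tes $a_1,\ldots,a_k$ de vecteurs primitifs sortants $\vec v_1,\ldots,\vec v_k$, la condition d'�quilibre $\sum_i w(a_i) \vec v_i = \vec 0$ dit exactement que les vecteurs $w(a_i)\vec v_i$, tourn�s de $90^\circ$, peuvent �tre mis bout � bout dans un ordre cyclique convenable pour former un polygone ferm� � sommets entiers $P_s$. On recolle ensuite les $P_s$ le long de leurs ar�tes communes, chaque ar�te born�e de $C$ joignant deux sommets $s$ et $s'$ �tant duale � une ar�te partag�e entre $P_s$ et $P_{s'}$. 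L'hypoth�se sur le degr� (les ar�tes non born�es dans les directions $(-1,0)$, $(0,-1)$ et $(1,1)$ portant un poids total �gal � $d$ dans chacune de ces directions) garantit que le polygone obtenu par recollement est exactement $\Delta_d$.

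La conclusion vient alors d'un argument d'aire �l�mentaire. Tout polygone � sommets entiers d'int�rieur non vide a une aire au moins �gale � $\frac{1}{2}$, par exemple parce qu'il contient un triangle primitif. L'aire de $\Delta_d$ valant $\frac{d^2}{2}$, la subdivision contient au plus $d^2$ cellules de dimension $2$, et la bijection donne imm�diatement que $C$ a au plus $d^2$ sommets.

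Le principal obstacle est la mise en place propre de cette dualit�, que le texte n'a pas d�velopp�e puisqu'il ne fait pas intervenir le point de vue alg�brique via les polyn�mes tropicaux. Une alternative purement combinatoire consisterait � se ramener au cas o� tous les sommets sont trivalents (subdiviser un sommet de valence sup�rieure en sommets trivalents ne fait qu'augmenter le nombre de sommets), puis � appliquer la formule d'Euler sur chaque composante connexe pour obtenir une relation de la forme $V = E_u + 2g - 2$, � majorer $E_u \le 3d$ par d�finition du degr�, et � utiliser la borne $g \le \frac{(d-1)(d-2)}{2}$, ce qui donne bien $V \le 3d + (d-1)(d-2) - 2 = d^2$. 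Mais cette derni�re borne sur le genre se d�montre elle-m�me le plus naturellement par le m�me argument d'aire sur la subdivision duale, de sorte que l'on ne s'affranchit pas r�ellement de la dualit�.
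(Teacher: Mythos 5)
Le texte ne donne aucune d\'emonstration de cette proposition, il n'y a donc pas de preuve de r\'ef\'erence \`a laquelle comparer la v\^otre. Votre argument est correct et c'est l'argument standard : la subdivision duale du triangle $\Delta_d$ associe bijectivement aux sommets de $C$ des polygones \`a sommets entiers d'int\'erieur non vide, donc d'aire au moins $\frac{1}{2}$, et l'aire totale $\frac{d^2}{2}$ de $\Delta_d$ donne la borne $d^2$. Le seul point qui m\'erite d'\^etre signal\'e -- et vous le signalez vous-m\^eme -- est que la dualit\'e n'est pas disponible telle quelle dans le cadre adopt\'e ici, o\`u une courbe tropicale est d\'efinie comme un graphe pond\'er\'e \'equilibr\'e et non comme le lieu des z\'eros d'un polyn\^ome tropical ; la construction des polygones $P_s$ et surtout la v\'erification qu'ils se recollent en une subdivision globale de $\Delta_d$ reposent sur un th\'eor\`eme de structure que le texte se contente d'\'evoquer en renvoyant \`a \cite{Mik3}. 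Un d\'etail \`a justifier dans votre r\'edaction : chaque $P_s$ est bien de dimension $2$, car un sommet est adjacent \`a au moins trois ar\^etes dont les directions ne peuvent pas toutes \^etre port\'ees par une m\^eme droite (deux ar\^etes de m\^eme vecteur primitif sortant se chevaucheraient au voisinage du sommet, ce qu'interdit la D\'efinition \ref{rect}). Votre variante par la formule d'Euler est \'egalement correcte, et vous avez raison de noter qu'elle ne fait que d\'eplacer la difficult\'e vers la borne $g\le\frac{(d-1)(d-2)}{2}$, elle aussi admise sans preuve dans le texte.
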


Comme en géométrie complexe, nous appellerons \textit{droite} (resp
\textit{conique}, \textit{cubique}) tropicale une
courbe tropicale de degré 1 (resp. 2, 3).
Une courbe tropicale peut être assez compliquée, et comme à la section
\ref{alg cplx} nous allons
travailler avec les plus simples, les courbes nodales.

\begin{defi}\label{ct nodale}
Une courbe tropicale $C$ est nodale si chaque arête non bornée de $C$
est de poids 1 et si chaque sommet de $C$ est trivalent ou quadrivalent.
 De plus, si $s$ est un sommet quadrivalent adjacent aux
arêtes $a_1,\ldots a_4$ de vecteurs primitifs sortants correspondants
$\vec v_1,\ldots 
,\vec v_4$, alors quitte à renuméroter les $a_i$ et $\vec v_i$ on a
$w(a_1)=w(a_2)$, $w(a_3)=w(a_4)$, $\vec v_1=-\vec v_2$ et $\vec
v_3=-\vec v_4$. 
\end{defi}
\begin{figure}[h]
\begin{center}
\begin{tabular}{c}
\includegraphics[height=3cm, angle=0]{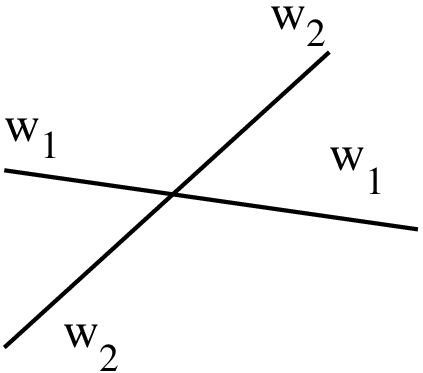}
\end{tabular}
\end{center}
\caption{Un sommet quadrivalent d'une courbe tropicale nodale}
\label{pt2 trop}
\end{figure}

En d'autres termes, une courbe tropicale $C$ est nodale si tous ses
sommets qui sont adjacents 
à au moins 4 arêtes sont en fait adjacents à exactement 4 arêtes et
sur un voisinage de ce sommet, $C$ est l'union de deux intervalles
(voir Figure \ref{pt2 trop}). Nous
retrouvons la notion de point d'intersection de deux branches de $C$
comme à la section \ref{alg cplx}. 

\begin{exe}
La courbe tropicale de la Figure \ref{graph}b n'est pas nodale car
un de ses sommets est adjacent à 6 arêtes. La courbe tropicale de la
Figure \ref{courbe trop}b n'est pas nodale car une de ses arêtes
infinies est de poids 2.
Les courbes tropicales des
Figures \ref{graph}c, d et \ref{courbe trop}a, c, d sont nodales.
\end{exe}

Pour calculer le genre d'une
courbe algébrique complexe, il suffit de compter le nombre de points doubles
de la courbe. En géométrie tropicale, l'idée est toujours la même mais
la définition est un peu 
différente car un sommet quadrivalent peut compter pour
plusieurs points doubles, et un sommet trivalent peut 
aussi ``cacher'' des points doubles.

\begin{defi}\label{ct genre}
Soit $C$ une courbe tropicale irréductible nodale de degré $d$ avec
$\sigma$ sommets trivalents. Le
genre de $C$, noté $g(C)$ est défini par
$$g(C)=\frac{\sigma -3d+2}{2}$$
\end{defi}

Comme dans le cas des courbes algébriques, le genre est un entier
compris entre  0 et $\frac{(d-1)(d-2)}{2}$.

\begin{exe}
Les courbes tropicales des Figures \ref{graph}c, d,  \ref{courbe
  trop}d sont de genre 0. La courbe tropicale de la Figure
\ref{courbe trop}c est de genre 1. 
\end{exe}

Comme en géométrie complexe, nous appellerons \textit{courbe tropicale
  rationnelle} une courbe tropicale de genre 0.

\subsection{Théorèmes de Correspondance}\label{corr}
Nous avons maintenant toutes les définitions requises pour pouvoir
faire de la géométrie énumérative tropicale.
Fixons nous un degré $d\ge 1$, un
genre $g\ge 0$, et 
une configuration générique $\omega=\{p_1,\ldots, p_{3d-1+g}\}$ de
$3d-1+g$ points dans 
$\RR^2$. Considérons 
alors l'ensemble 
$\TT \mathcal C (d,g,\omega)$ de toutes les courbes  tropicales
irréductibles 
nodales\footnote{Comme
  dans le cas algébrique, il existe une notion de genre pour toute
  courbe tropicale, et pour une configuration générique $\omega$,
  toutes les courbes de $\TT \mathcal C (d,g,\omega)$ seront
  nodales. On peut donc encore une fois oublier de préciser
  ``nodales'' dans l'énoncé du problème.} de  degré 
$d$,  de genre  $g$,  passant par  tous les points de $\omega$. 

\begin{prop}[Mikhalkin, \cite{Mik1}]\label{fini trop}
Si la configuration de
points $\omega$ est générique, alors l'ensemble $\TT \mathcal C
(d,g,\omega)$ est fini.
\end{prop}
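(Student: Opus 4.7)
The plan is to bound $\TT \mathcal C(d,g,\omega)$ by cutting the problem into combinatorial pieces: first enumerate the possible \emph{combinatorial types} of curves, then show that within each type, generic point conditions pin down at most finitely many curves.

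First, I would show that there are only finitely many combinatorial types of irreducible nodal tropical curves of degree $d$ and genus $g$, where a combinatorial type records the abstract graph structure (together with the trivalent/quadrivalent nature of each vertex), the weight of each edge, and the primitive direction of each edge. By Proposition \ref{max som} the number of vertices is at most $d^2$. Since the nodal condition forces each vertex to be trivalent or quadrivalent, the number of edges is then bounded by $2d^2$. Moreover, the nodal condition forces unbounded edges to have weight $1$, and the balancing equations at each vertex, combined with the fact that the total ``outflow'' at infinity is $3d$, bound the weights and primitive directions of bounded edges to a finite list of possibilities in $\ZZ^2$.

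Second, fix a combinatorial type $\tau$. The set $M_\tau$ of tropical curves realising $\tau$ is parametrised by the $2|V|$ coordinates of vertex positions in $\RR^2$, subject to one affine-linear constraint per bounded edge (the vector between its endpoints must be a positive real multiple of its prescribed primitive direction). A direct count using the relation $\sigma = 2g + 3d - 2$ from Definition \ref{ct genre}, together with the Euler-type formula $2E_b = 3\sigma + 4q - 3d$ relating bounded edges $E_b$, trivalent vertices $\sigma$, quadrivalent vertices $q$ and unbounded edges, yields $\dim M_\tau \le 3d - 1 + g$, with equality in the purely trivalent case.

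Third, the condition that one edge of the curve pass through a fixed point $p_i \in \RR^2$ is a single affine-linear equation on the vertex coordinates (once the primitive direction of the containing edge is fixed, only its perpendicular offset is free). For a generic configuration $\omega$, the $3d-1+g$ such equations (one per point) are independent on $M_\tau$, so the subset of $M_\tau$ consisting of curves passing through $\omega$ is at most zero-dimensional, hence finite. Taking the union over the finitely many combinatorial types gives the finiteness of $\TT \mathcal C(d,g,\omega)$.

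The main obstacle is the last step: \emph{genericity} must be invoked carefully. One must check that for $\omega$ outside a suitable lower-dimensional ``bad'' locus, no point $p_i$ falls at a vertex, no two point conditions are accidentally redundant, and no tropical curve carrying an edge of unexpectedly large weight appears through $\omega$ (such phenomena would lie in the cellular boundary of the space of types and must be excluded). Once this transversality is established, each cell of dimension exactly $3d - 1 + g$ contributes at most one solution and all lower-dimensional cells contribute none, which proves the proposition.
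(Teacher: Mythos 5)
The paper gives no proof of this proposition: it is quoted directly from Mikhalkin's article \cite{Mik1}, so there is no in-paper argument to compare against. Your strategy --- finitely many combinatorial types, a dimension count per type, then transversality for generic $\omega$ --- is exactly the strategy of that reference (and of Gathmann--Markwig). Your first and third steps are sound: Proposition \ref{max som}, the valence restriction for nodal curves, and the bound $w(a)\vec v=(\alpha,\beta)$ with $|\alpha|,|\beta|\le d$ (which the paper itself invokes in the proof of the Corollaire \ref{floor dec}) do yield finitely many types; and for a fixed type each point condition is one affine-linear equation, so generic independence amounts to avoiding a finite union of lower-dimensional bad loci, as you say.

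The genuine gap is in your second step. Writing $M_\tau$ as the subset of $\RR^{2|V|}$ cut out by one parallelism equation per bounded edge gives $\dim M_\tau = 2|V|-\mathrm{rank}$, hence $\dim M_\tau \ge 2|V|-E_b$; your count produces a \emph{lower} bound, not the upper bound you assert. (The computation of $2|V|-E_b$ is correct and equals $3d-1+g$ whether or not quadrivalent vertices are present; the problem is only the direction of the inequality.) If for some type the $E_b$ equations were dependent --- the phenomenon of \emph{superabundant} curves, which occurs exactly when the two closing-up conditions attached to some cycle of the graph degenerate --- then $\dim M_\tau > 3d-1+g$, and imposing $3d-1+g$ point conditions would leave a positive-dimensional family, destroying finiteness rather than merely complicating the bookkeeping. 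One must therefore prove that nodal plane tropical curves are \emph{regular}: Mikhalkin does this by ordering the independent cycles so that each contains an edge whose direction is extremal among the directions occurring on that cycle, which forces the corresponding pair of cycle equations to be independent of the preceding ones; in the embedded setting of this paper one can also observe that a cycle all of whose edges lie on a single line would violate the condition that two edges meet only in a vertex. Without some such regularity argument the inequality $\dim M_\tau \le 3d-1+g$ is unjustified and the proof does not close.
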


\begin{exe}\label{en dte}
Si nous prenons $d=1$ (et donc $g=0$), nous regardons alors les
droites tropicales 
passant par deux points  de $\RR^2$. Si les deux points sont
sur une même droite (habituelle!) d'équation $X=\alpha$ ou $Y=\alpha$ ou
$X=Y+\alpha$, alors il existe une infinité de droites tropicales passant
par ces deux points (voir la Figure  \ref{dtes trop}a). Dans ce cas,
les deux points ne 
sont pas en position générique. Par contre, si les deux points ne sont
pas sur une telle droite, alors il existe une unique droite tropicale
passant par ces deux points, comme nous le voyons sur les Figures \ref{dtes
  trop}b, c et d pour
différentes paires de points. Nous retrouvons ainsi le même résultat
que pour les 
droites algébriques! 
\end{exe}

\begin{figure}[h]
\begin{center}
\begin{tabular}{ccccccc}
\includegraphics[height=2.5cm, angle=0]{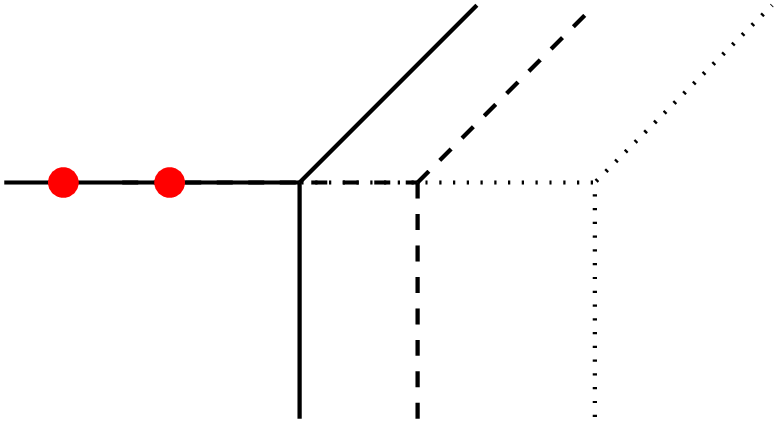}&
\hspace{1ex}  &
\includegraphics[height=2.5cm, angle=0]{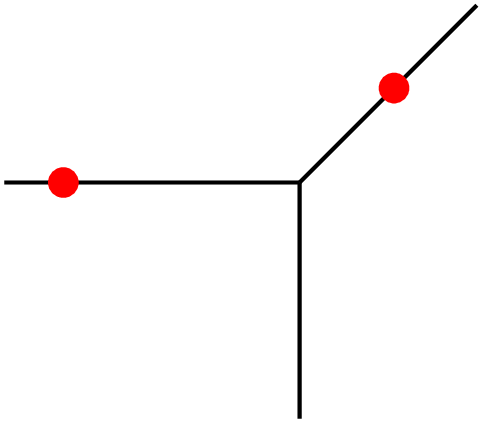}&
\hspace{1ex}  &
\includegraphics[height=2.5cm, angle=0]{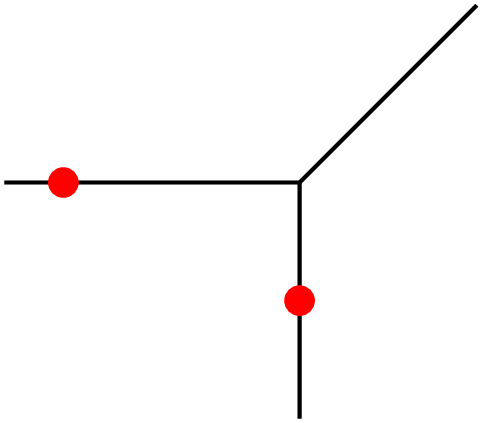}&
\hspace{1ex}  &
\includegraphics[height=2.5cm, angle=0]{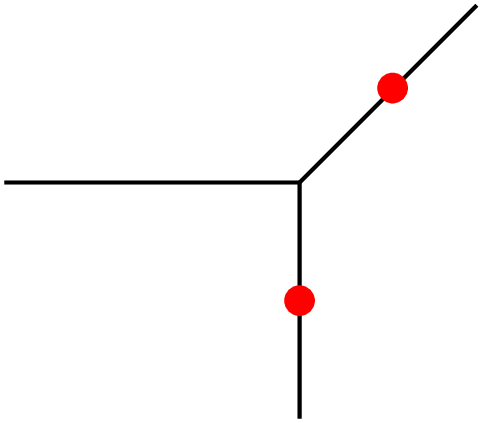}

\\ a) Non générique&& b)  Générique &&
c)  Générique &&d) Générique
\end{tabular}
\end{center}
\caption{Deux points dans $\RR^2$ et les droites les contenant}
\label{dtes trop}
\end{figure}

\begin{exe}\label{en con}
Fixons nous maintenant $d=2$ (et donc $g=0$). Nous comptons  les
coniques tropicales passant par 5 points en position générique, et
dans ce cas encore nous 
en trouvons une unique. Comme pour les courbes algébriques! Nous
pouvons voir cette 
conique tropicale pour un exemple de configuration sur la Figure
\ref{Tconcub}a. La 
preuve que cette conique est la seule sera donnée à la section \ref{etage}.
\end{exe}
\begin{figure}[h]
\begin{center}
\begin{tabular}{ccccccc}
\includegraphics[height=3cm, angle=0]{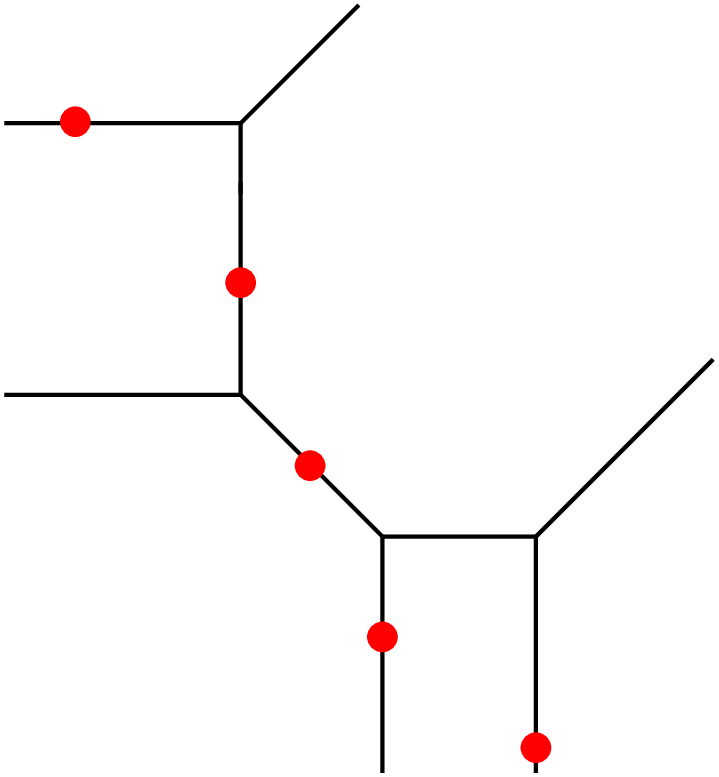}&
\hspace{6ex}  &
\includegraphics[height=4cm, angle=0]{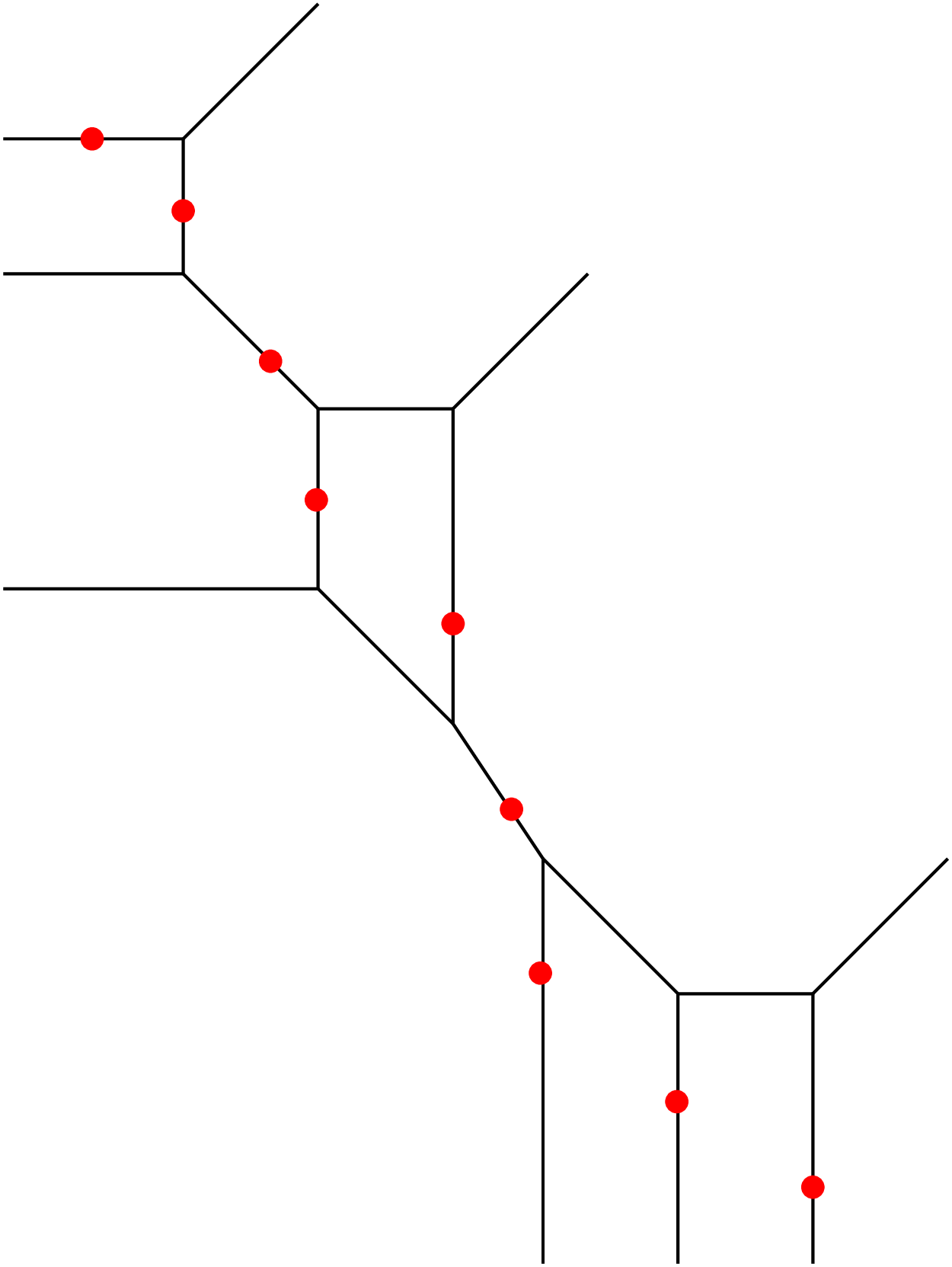}

\\ a) Une unique conique tropicale && b)  Une unique cubique
tropicale de genre 1
\\ passant par 5 points&&  passant par 9 points
\end{tabular}
\end{center}
\caption{Deux solutions de problèmes énumératifs tropicaux}
\label{Tconcub}
\end{figure}

\begin{exe}
Pour $d=3$, le genre peut être 0
ou 1. Si nous fixons $g=1$, alors comme aux Exemples \ref{en dte}
et \ref{en con},
 nous trouvons une
unique cubique de genre 1 passant par 9 points génériques (voir Figure
\ref{Tconcub}b). Regardons maintenant les cubiques tropicales rationnelles
passant par  une configuration  de 8 points. La Figure \ref{Tcub0}
nous donne
toutes les cubiques tropicales  rationnelles passant par une  telle
configuration.  La 
preuve que toutes les cubiques rationnelles passant par les 8 points
choisis sont bien représentées sur la Figure \ref{Tcub0} sera donnée à
la section 
\ref{etage}. Nous avons alors beau compter et recompter, 
nous ne trouvons que 9
cubiques tropicales rationnelles passant par nos 8 points, rien à voir avec
le nombre $N(3,0)$ égal à 12... La similitude entre la géométrie
tropicale et la géométrie complexe
s'arrêterait-elle ici?
\end{exe}

\begin{figure}[h]
\begin{center}
\begin{tabular}{ccccc}
\includegraphics[height=4cm, angle=0]{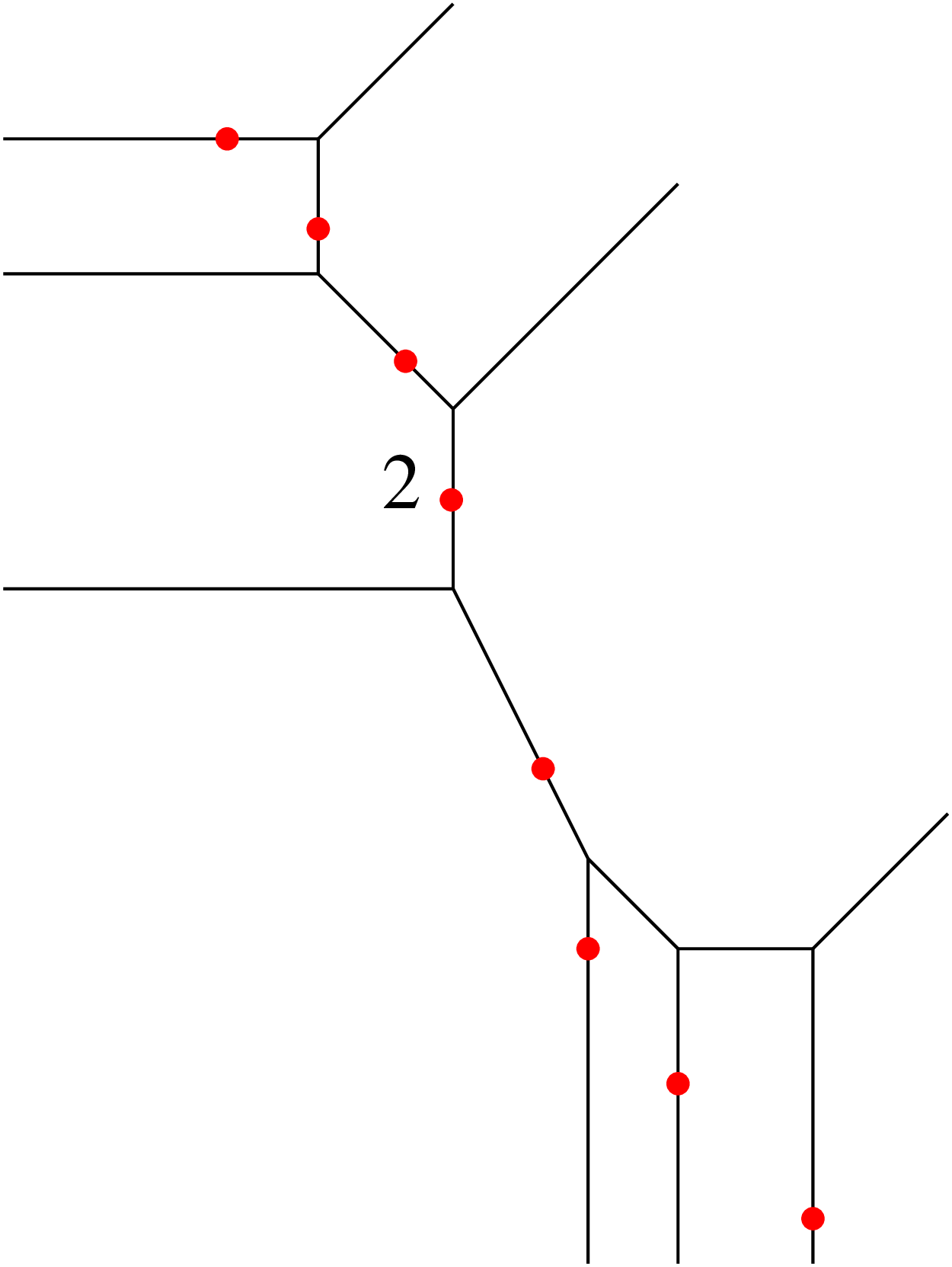}&
\hspace{3ex}  &
\includegraphics[height=4cm, angle=0]{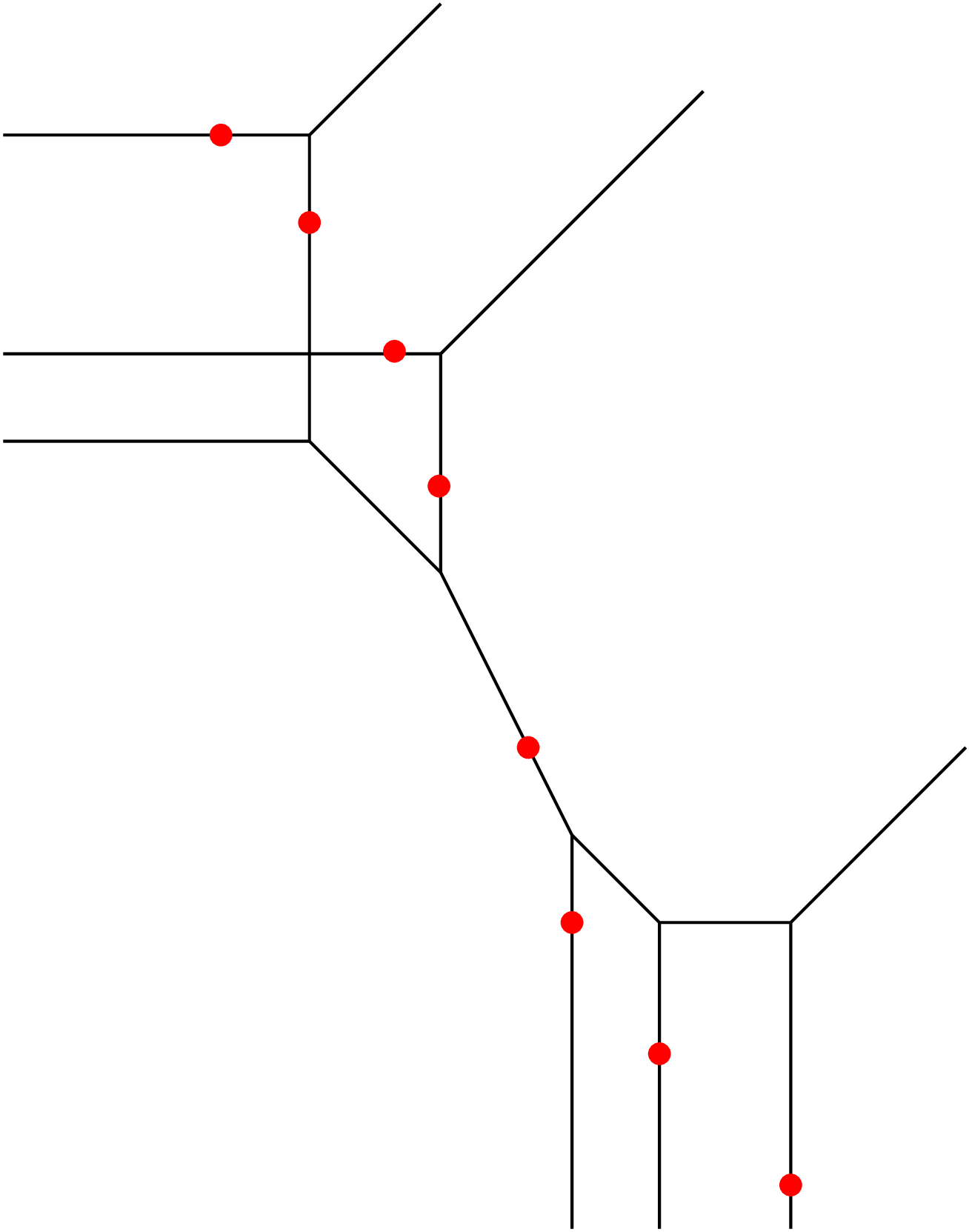}&
\hspace{3ex}  &
\includegraphics[height=4cm, angle=0]{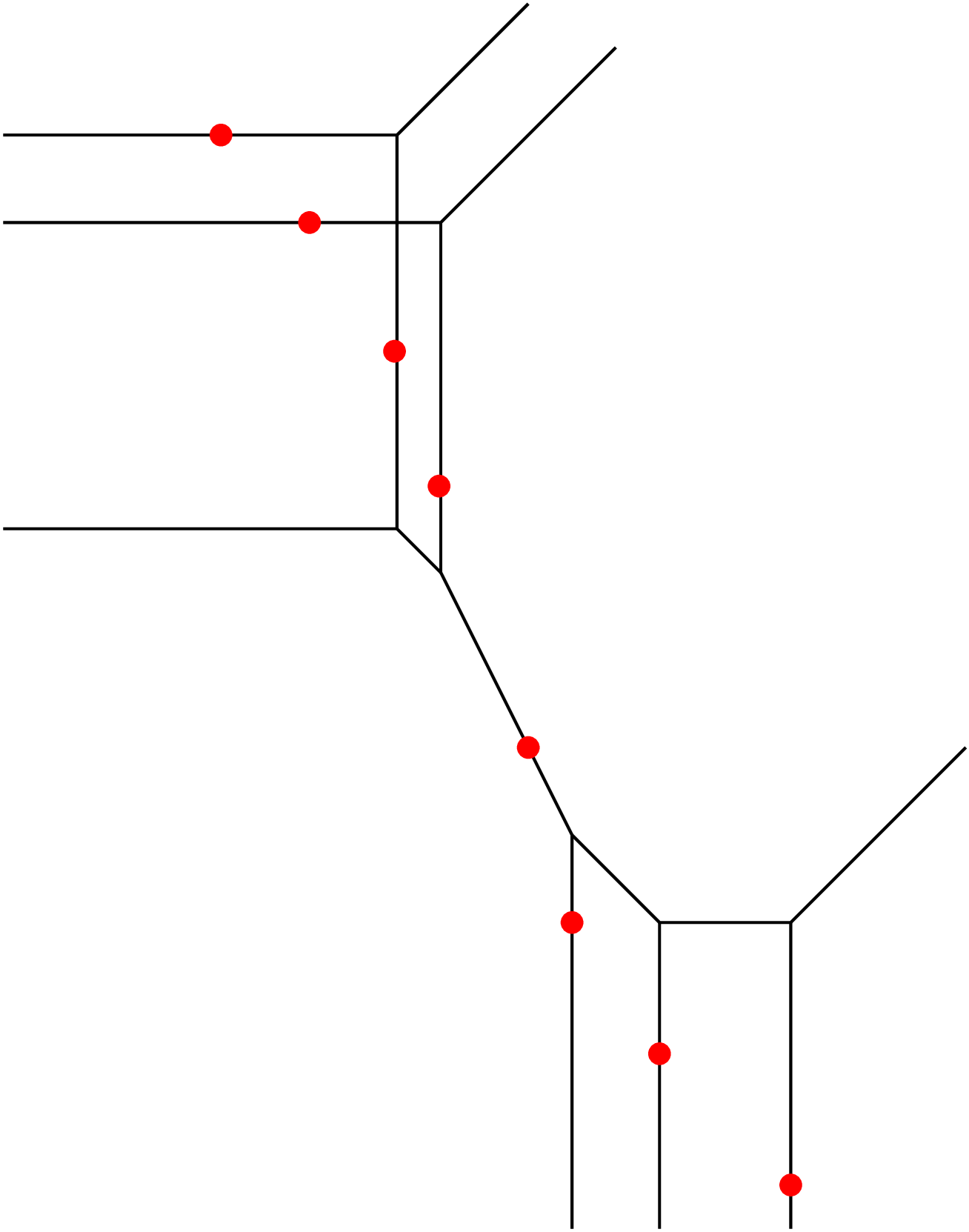}
\\ a) &&b) &&c)
\\
\\
\\\includegraphics[height=4cm, angle=0]{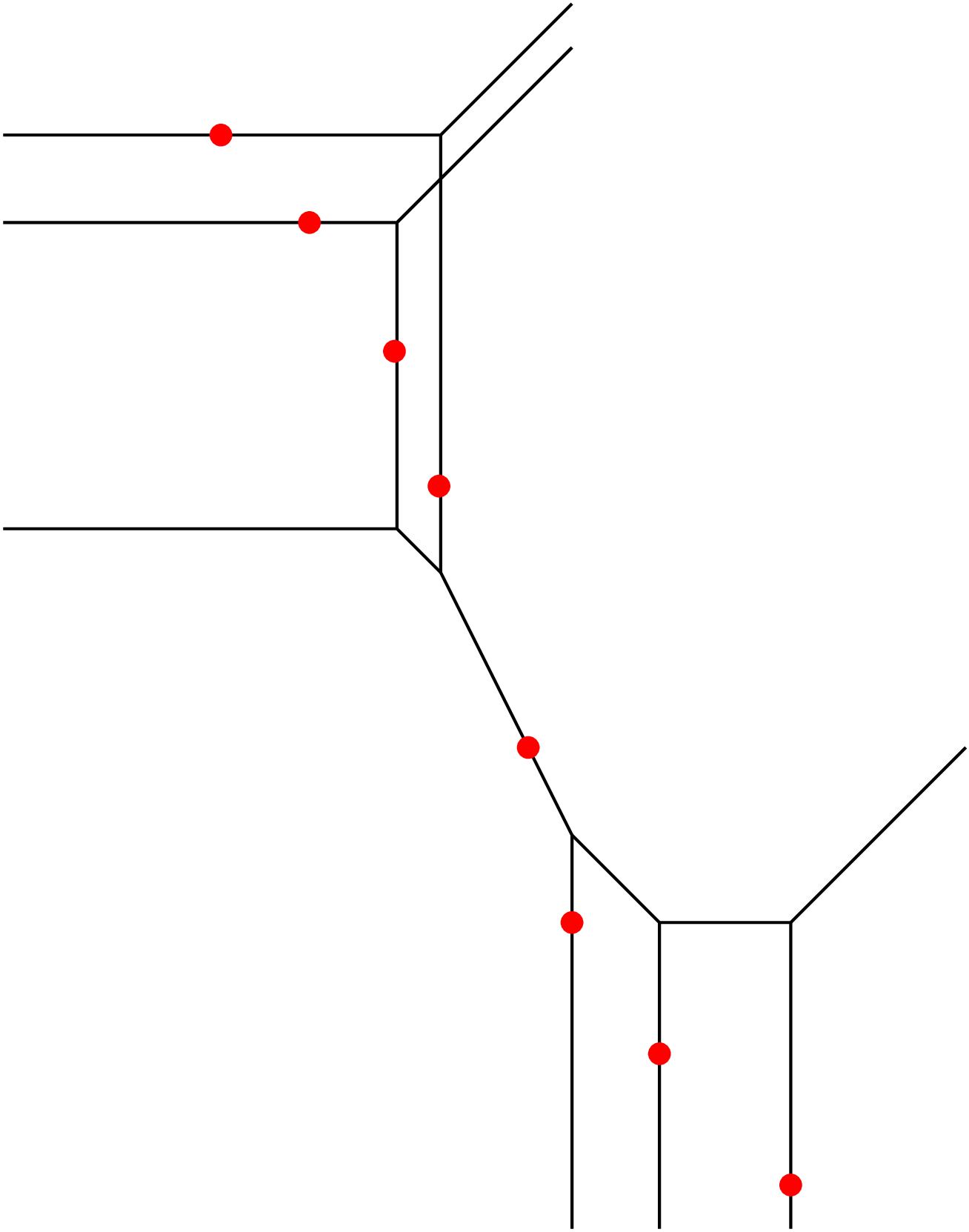}&
\hspace{3ex}  &
\includegraphics[height=4cm, angle=0]{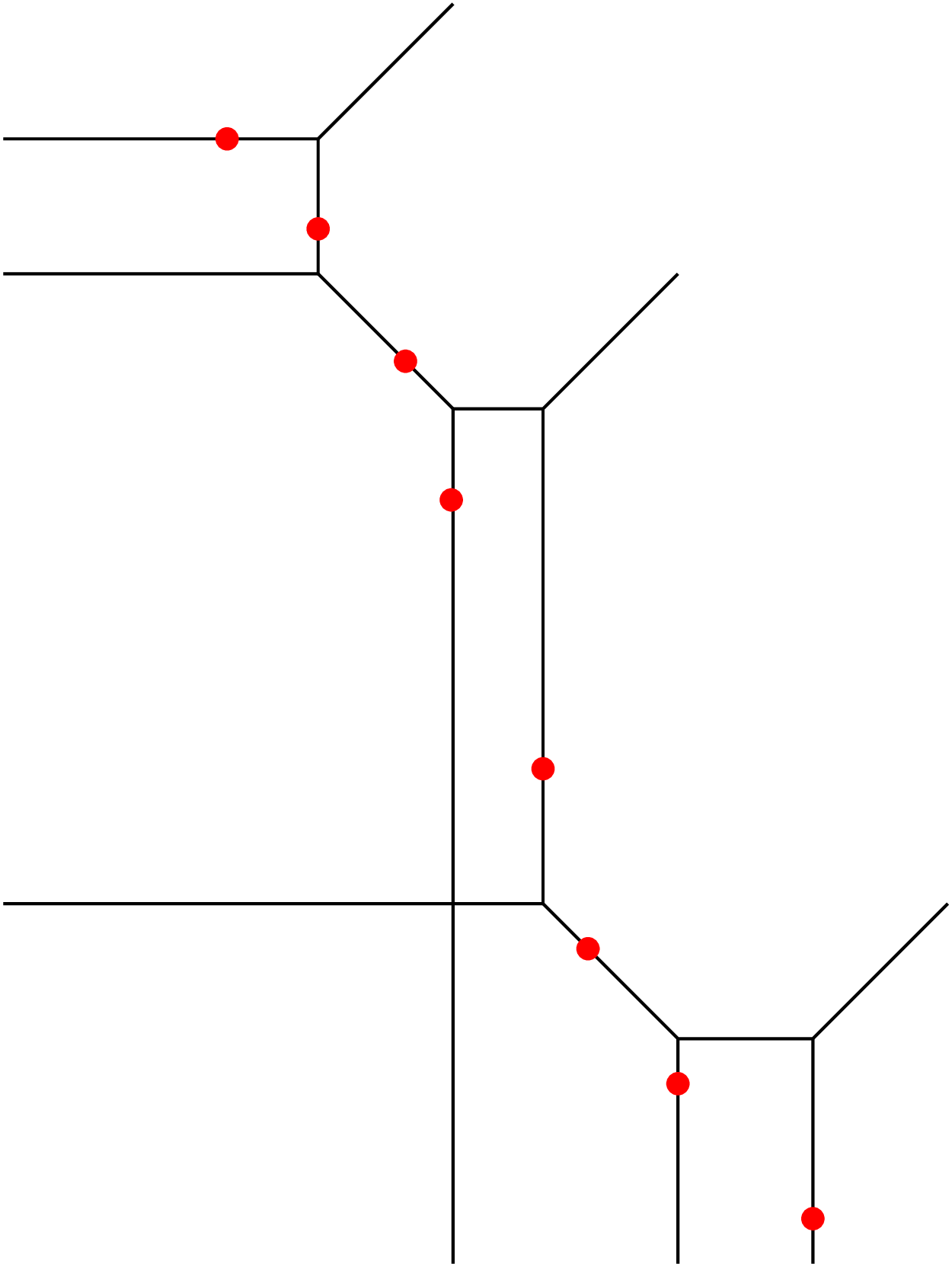}&
\hspace{3ex}  &
\includegraphics[height=4cm, angle=0]{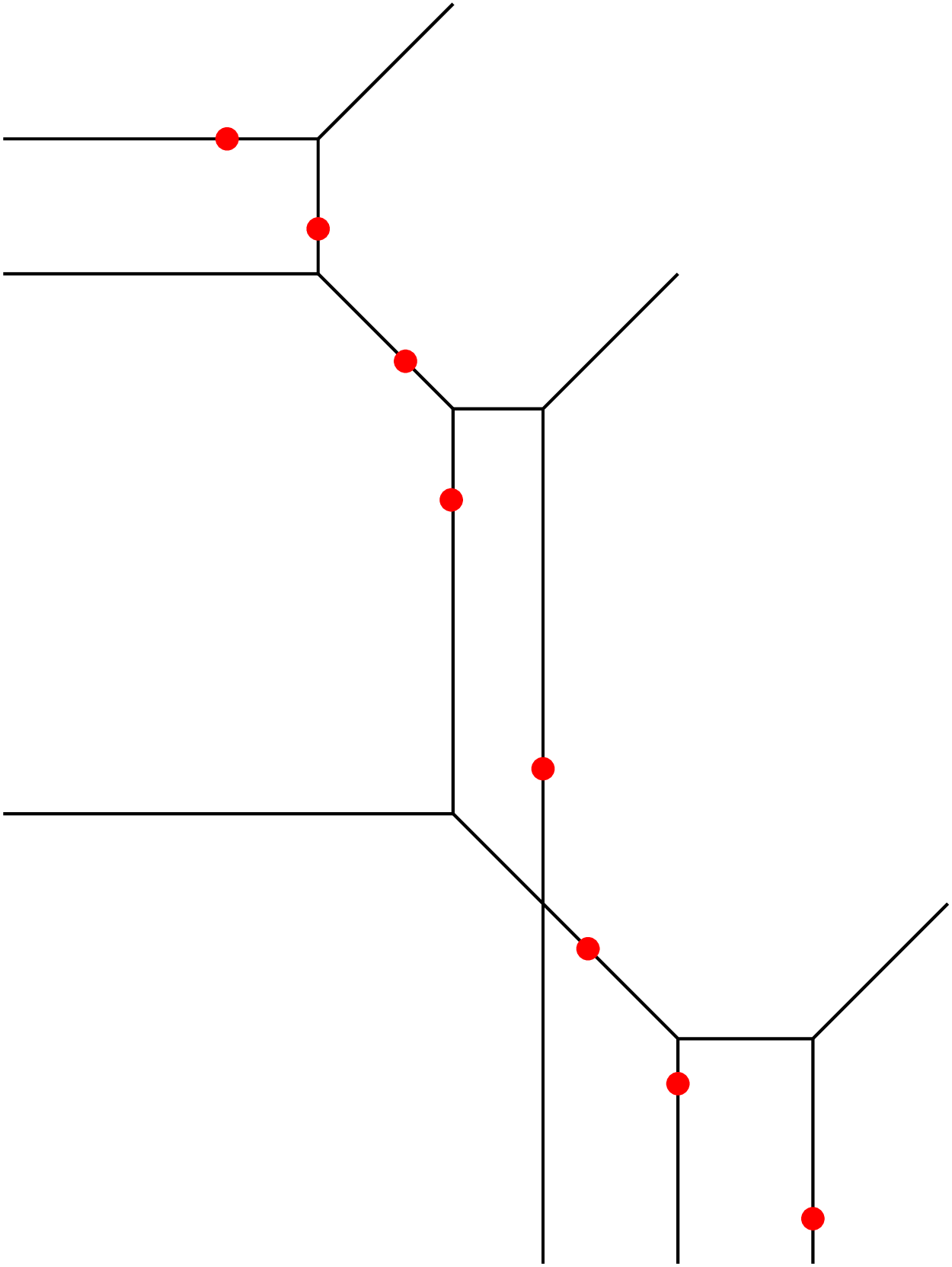}
\\ d) &&e) &&f)

\\
\\
\\\includegraphics[height=4cm, angle=0]{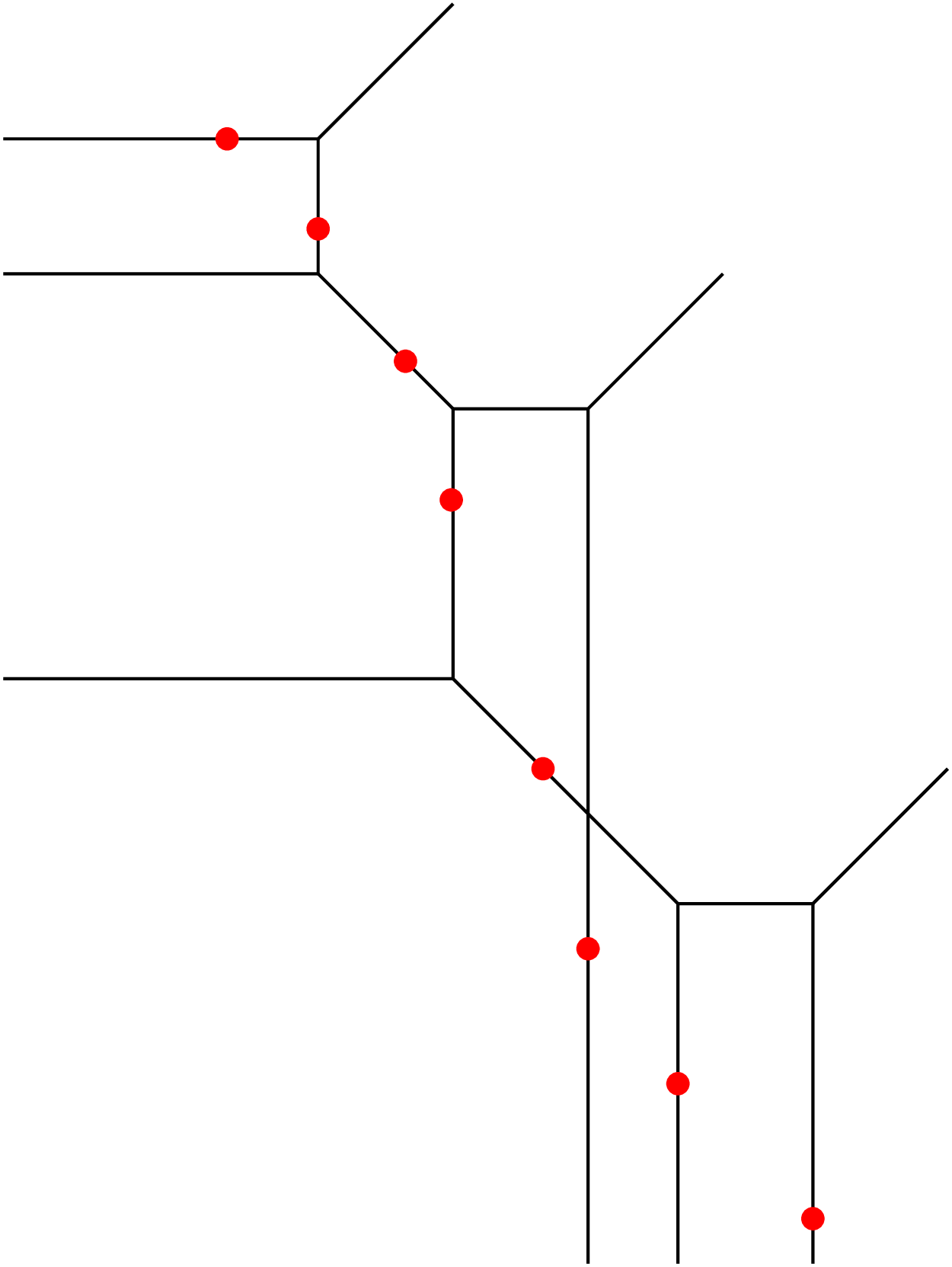}&
\hspace{3ex}  &
\includegraphics[height=4cm, angle=0]{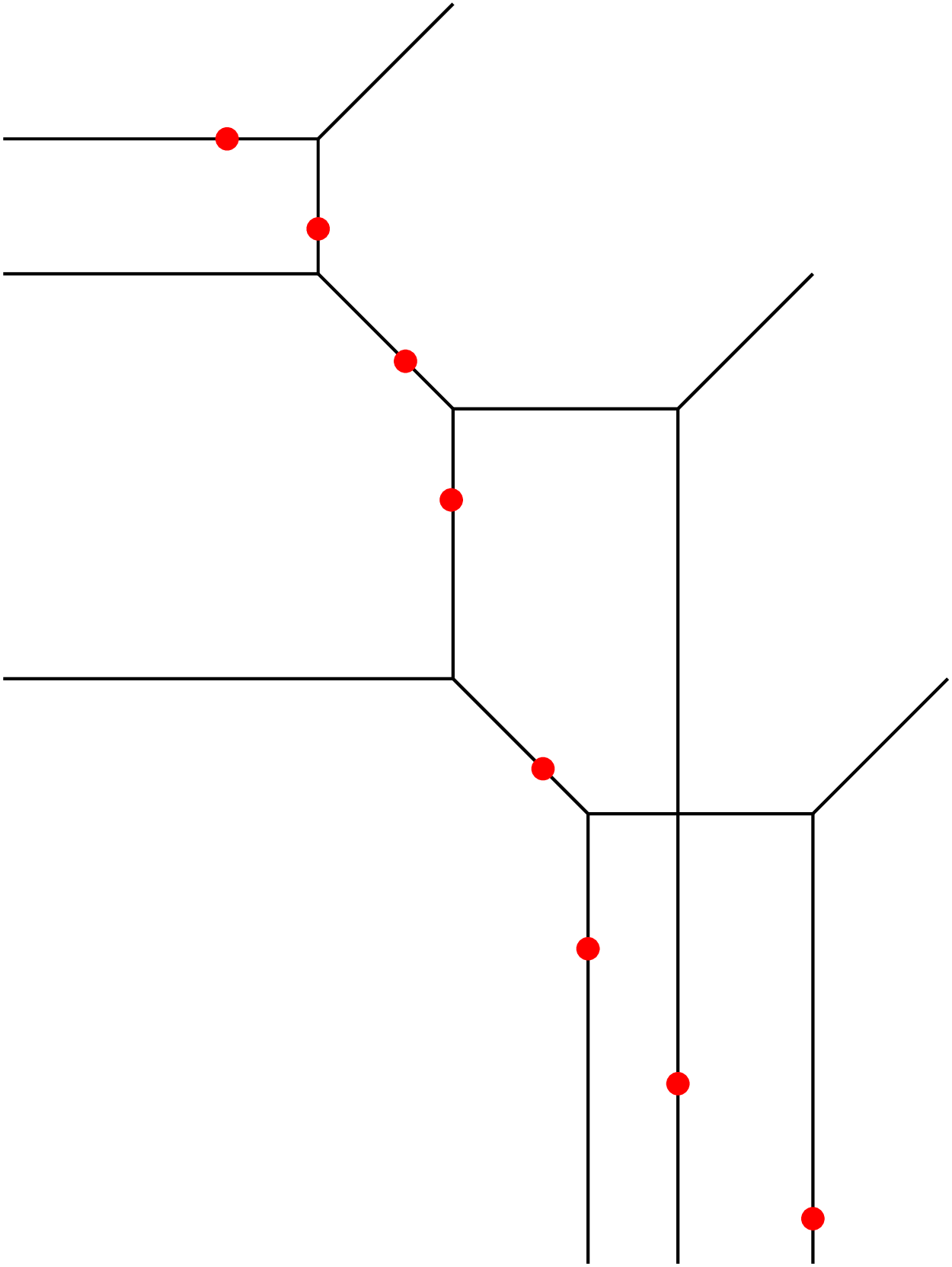}&
\hspace{3ex}  &
\includegraphics[height=4cm, angle=0]{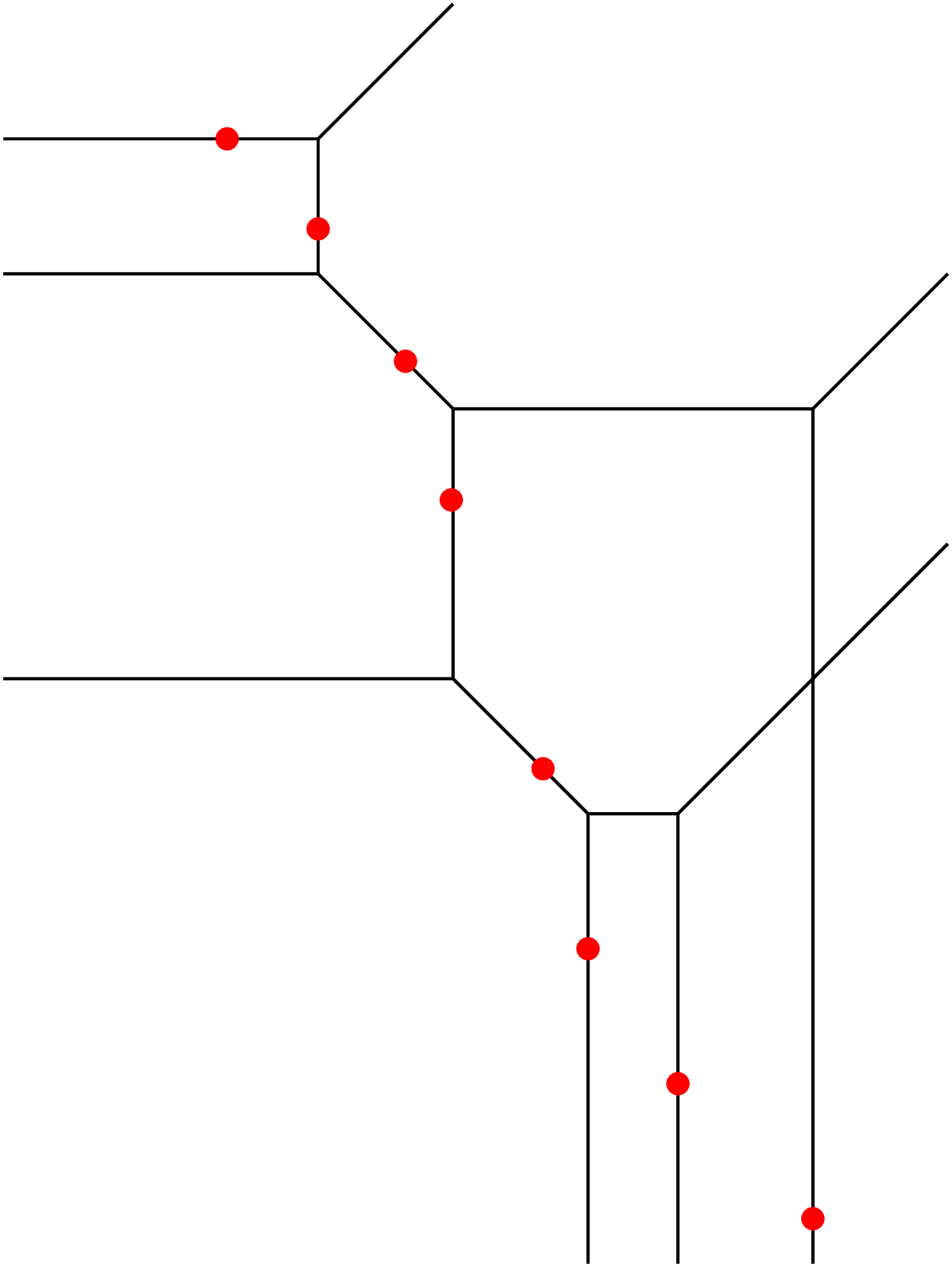}
\\ g) &&h) &&i)

\end{tabular}
\end{center}
\caption{Cubiques tropicales rationnelles passant par 8 points}
\label{Tcub0}
\end{figure}

En fait, nous trouvons 9 dans l'exemple précédent car nous comptons
mal. Nous avons déjà vu à la Définition \ref{ct genre} du genre d'une courbe
tropicale nodale qu'un point d'intersection de deux branches pouvait
en fait compter pour plusieurs points 
doubles. La situation est analogue ici : il peut arriver, et il arrive
souvent, que l'on doive compter une courbe tropicale de $\TT \mathcal
C (d,g,\omega)$  plusieurs 
fois. En d'autres termes, il faut attribuer à chaque courbe une
\textit{multiplicité}, et compter les courbes de $\TT \mathcal C
(d,g,\omega)$ avec cette multiplicité. 

Quelle est  donc cette multiplicité?
Soit $C$ une courbe tropicale nodale, et $s$ un sommet trivalent de $C$ adjacent
aux arêtes $a_1,a_2$ et $a_3$ de vecteurs primitifs sortants
correspondants $\vec
v_1,\vec v_2$ et $\vec v_3$. La multiplicité complexe de $s$ est
définie par
$$\mu_\CC(s)=w(a_1)w(a_2)\Big|\det\big(\vec v_1,\vec v_2\big)\Big| $$
Grâce à la condition d'équilibre, le nombre $\mu_\CC(s)$ ne dépend pas
de la numérotation des $a_i$. Notons $\Gamma_{0,3}$ l'ensemble des
sommets trivalents de $C$.

\begin{defi}
La multiplicité complexe
d'une courbe tropicale nodale $C$, notée $\mu_\CC(C)$, est définie par
$$\mu_{\CC}(C)= \prod_{s\in \Gamma_{0,3}}\mu_\CC(s) $$
\end{defi}

Quelle est la signification de cette multiplicité? Prenons une
configuration générique   $\omega$ de $3d-1+g$ points dans
$\RR^2$. Dans son article \cite{Mik1},  Mikhalkin construit à partir
de $\omega$  une configuration générique $\omega_0$
 de  $3d-1+g$
points  dans $\CC^2$ et montre ensuite qu'il
existe une
application naturelle $\phi :\mathcal C (d,g,\omega_0)\to \TT \mathcal C
(d,g,\omega)$. La multiplicité d'une courbe de $\TT \mathcal C
(d,g,\omega)$ est alors le cardinal de $\phi^{-1}(C)$.

Avec ces précisions, il devient clair  que c'est  avec cette multiplicité qu'il
faut compter les courbes 
tropicales. 

\begin{thm}[Mikhalkin \cite{Mik1}]\label{corr 1}
Pour tous $d\ge 1$, $g\ge 0$, et $\omega$ générique, on a l'égalité
$$N(d,g)=  \sum_{C\in \TT \mathcal C (d,g,\omega)}\mu_{\CC}(C)$$
\end{thm}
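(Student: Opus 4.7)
Le plan consiste \`a construire explicitement l'application $\phi : \mathcal{C}(d,g,\omega_0)\to \TT \mathcal{C}(d,g,\omega)$ annonc\'ee dans le texte, en travaillant sur le corps $\KK = \CC\{\{t\}\}$ des s\'eries de Puiseux, muni de sa valuation $\mathrm{val} : \KK^*\to \QQ$. Premi\`erement, je rel\`everais la configuration $\omega = \{p_1,\ldots,p_{3d-1+g}\}\subset \RR^2$ en une configuration $\omega_\KK\subset(\KK^*)^2$ telle que $(-\mathrm{val}(x_i),-\mathrm{val}(y_i)) = p_i$ pour tout $i$. Comme $\KK$ est alg\'ebriquement clos de caract\'eristique z\'ero, le principe de Lefschetz (appliqu\'e au fait, \'evoqu\'e apr\`es la Proposition \ref{inv cplx}, que $N(d,g)$ est le degr\'e d'un certain polyn\^ome universel) garantit que $\mathrm{Card}\left(\mathcal{C}_\KK(d,g,\omega_\KK)\right) = N(d,g)$ d\`es que $\omega_\KK$ est g\'en\'erique.

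Deuxi\`emement, je d\'efinirais $\phi$ via la tropicalisation : si $C_\KK$ est une courbe alg\'ebrique sur $\KK$ d\'efinie par un polyn\^ome $P(X,Y) = \sum a_{i,j}X^iY^j$, la courbe tropicale associ\'ee $\mathrm{Trop}(C_\KK)$ est l'adh\'erence dans $\RR^2$ de l'image de $C_\KK\cap(\KK^*)^2$ par $(x,y)\mapsto (-\mathrm{val}(x), -\mathrm{val}(y))$, et co\"incide avec le lieu de non-lin\'earit\'e du polyn\^ome tropical $p(x,y) = \max_{i,j}(-\mathrm{val}(a_{i,j}) + ix + jy)$. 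Par construction $\omega \subset \mathrm{Trop}(C_\KK)$, et la dualit\'e classique entre $\mathrm{Trop}(C_\KK)$ et la subdivision du polygone de Newton de $P$ induite par les valuations des $a_{i,j}$ montre que le degr\'e se pr\'eserve, et que, pour $\omega$ g\'en\'erique, le genre aussi. L'application $\phi$ ainsi obtenue est bien \`a valeurs dans $\TT \mathcal{C}(d,g,\omega)$.

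Troisi\`emement, le c\oe ur de la preuve consiste \`a calculer, pour chaque $C\in \TT \mathcal{C}(d,g,\omega)$, le cardinal de $\phi^{-1}(C)$ et \`a v\'erifier qu'il vaut $\mu_\CC(C)$. On analyse pour cela la situation au voisinage de chaque sommet trivalent $s$ adjacent aux ar\^etes $a_1,a_2,a_3$ de poids $w_i$ et vecteurs primitifs $\vec v_i$ : la structure locale de $C_\KK$ correspond alors \`a un syst\`eme binomial tordu associ\'e au triangle dual \`a $s$ dans la subdivision de Newton, triangle d'aire enti\`ere $\frac{1}{2}w_1w_2|\det(\vec v_1,\vec v_2)|$. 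Un argument standard (analogue tropical du th\'eor\`eme de Bernstein-Koushnirenko, ou r\'esolution explicite du syst\`eme binomial modulo l'action d'un sous-tore fini) montre qu'il existe exactement $w_1w_2|\det(\vec v_1,\vec v_2)| = \mu_\CC(s)$ rel\`evements locaux dans $\KK^2$. Les sommets quadrivalents repr\'esentent, eux, de simples intersections transverses entre deux branches de $C_\KK$ et ne contribuent que par un facteur~$1$.

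Par g\'en\'ericit\'e de $\omega$, les rel\`evements locaux en chaque sommet trivalent se recollent de fa\c{c}on ind\'ependante en une unique courbe globale $C_\KK\in\mathcal{C}_\KK(d,g,\omega_\KK)$, et l'on obtient $|\phi^{-1}(C)|=\prod_{s}\mu_\CC(s)=\mu_\CC(C)$, ce qui donne l'\'egalit\'e cherch\'ee par sommation sur $C\in \TT \mathcal{C}(d,g,\omega)$. Le principal obstacle est double : d'une part la v\'erification que le choix de $\omega_\KK$ assure simultan\'ement la g\'en\'ericit\'e dans $\mathcal{C}_\KK$ et dans $\TT \mathcal{C}$ (en particulier, qu'aucune courbe limite parasite de genre strictement plus petit ou avec une composante multiple n'appara\^it), et d'autre part la v\'erification du comptage local aux sommets trivalents, qui demande une \'etude soigneuse des r\'esidus des coefficients de $P$ modulo les diff\'erentes valuations et une utilisation fine de la combinatoire du polygone de Newton pour exclure les rel\`evements qui ne se recollent pas en une courbe de bon degr\'e et bon genre.
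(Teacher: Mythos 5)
Votre plan reproduit fid\`element la strat\'egie que le texte ne fait qu'esquisser : le th\'eor\`eme y est attribu\'e \`a Mikhalkin et cit\'e \emph{sans d\'emonstration}, le seul indice donn\'e \'etant le paragraphe qui pr\'ec\`ede l'\'enonc\'e, o\`u l'on affirme l'existence d'une application $\phi:\mathcal C(d,g,\omega_0)\to\TT\mathcal C(d,g,\omega)$ dont les fibres ont pour cardinal $\mu_\CC(C)$. Votre r\'ealisation de $\phi$ par tropicalisation sur le corps des s\'eries de Puiseux, la dualit\'e avec la subdivision du polygone de Newton, et le comptage local des rel\`evements en un sommet trivalent par l'aire enti\`ere du triangle dual (qui redonne bien $w_1w_2\bigl|\det(\vec v_1,\vec v_2)\bigr|=\mu_\CC(s)$, les sommets quadrivalents correspondant \`a des parall\'elogrammes de contribution $1$) sont exactement l'architecture de la preuve de Mikhalkin et de sa variante par Shustin. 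Sur ce plan, votre proposition est correcte et m\^eme plus pr\'ecise que ce que donne le texte.

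En revanche, en l'\'etat ce n'est pas une d\'emonstration : les deux points que vous rel\'eguez en fin de proposition comme \emph{le principal obstacle} sont pr\'ecis\'ement le contenu du th\'eor\`eme. D'une part, le fait que les rel\`evements locaux aux diff\'erents sommets se recollent \emph{ind\'ependamment} en exactement une courbe globale de $\mathcal C_\KK(d,g,\omega_\KK)$ n'a rien d'automatique ; il requiert un argument global de transversalit\'e (patchworking de Viro, ou th\'eorie des d\'eformations des courbes nodales), et c'est l\`a que la g\'en\'ericit\'e de $\omega$ joue son r\^ole essentiel --- l'invoquer ne suffit pas. D'autre part, la surjectivit\'e de $\phi$ et la pr\'eservation du genre (exclure qu'une courbe de $\mathcal C_\KK(d,g,\omega_\KK)$ tropicalise sur un graphe de genre strictement plus petit, non nodal, ou avec ar\^etes de poids parasites) demandent une preuve s\'epar\'ee, que vous signalez sans la donner. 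S'y ajoute un d\'etail \`a r\'egler : les valuations des s\'eries de Puiseux sont rationnelles, alors que les points de $\omega$ sont r\'eels quelconques ; il faut soit choisir $\omega$ \`a coordonn\'ees rationnelles (ce que la g\'en\'ericit\'e permet), soit travailler sur un corps valu\'e plus gros. Votre texte identifie donc correctement o\`u se trouve la difficult\'e, mais il constitue un plan de preuve exact plut\^ot qu'une preuve.
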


Il n'est pas difficile de voir que le cardinal de  $\TT \mathcal C
(d,g,\omega)$ dépend de $\omega$. Un corollaire immédiat du
Théorème \ref{corr 1}
est  que le nombre de courbes
tropicales dans $\TT \mathcal C (d,g,\omega)$ comptées avec
multiplicité ne dépend pas de $\omega$.

\begin{exe}
Toutes les courbes tropicales des Figures \ref{dtes trop}, \ref{Tconcub} et
\ref{Tcub0} sont de multiplicité complexe 1 sauf la courbe
de la Figure \ref{Tcub0}a qui est de multiplicité complexe 4. Nous
retrouvons ainsi 
$N(1,0)=N(2,0)=N(3,1)=1$ et $N(3,0)=12$. 
\end{exe}

Ainsi, la géométrie énumérative tropicale donne les mêmes résultats
que la géométrie énumérative complexe. Qu'en est-il de la géométrie
énumérative réelle? Dans la démonstration du Théorème
\ref{corr 1}, il est possible de choisir $\omega_0\subset \RR^2$ et
donc de considérer l'ensemble $\RR \mathcal C (d,g,\omega_0)$.
Mikhalkin a alors montré que la somme des signes de Welschinger des 
courbes algébriques réelles dans 
$\phi^{-1}(C)$  pouvait se calculer très facilement à partir de la
seule connaissance de $C$ et était égale à $-1$, 0 ou 1. C'est-à-dire
qu'il existe  
 une multiplicité
réelle  d'une courbe tropicale rationnelle  telle 
qu'en comptant les  
courbes
tropicales dans $\TT \mathcal C (d,0,\omega)$ avec cette
multiplicité, on retrouve l'invariant de Welschinger.
Soit $C$ une courbe tropicale rationnelle nodale, et $s$ un sommet
trivalent de
$C$. Si $\mu_\CC(s)$ est pair, alors la multiplicité réelle de
$C$ sera 0. Si $\mu_\CC(s)$ est
impair,  sa valeur modulo 4 est soit 1
soit 3. Posons
$$o(C)=\mathcal Card\left(\{s\textrm{ sommet trivalent de $C$ tel que
}\mu_\CC(s)=3\ mod \ 4\}\right)  $$

\begin{defi}
La multiplicité réelle
d'une courbe tropicale nodale irréductible $C$, notée $\mu_\RR(C)$,
est définie par 
$$\mu_{\RR}(C)=0 $$
si $\mu_{\CC}(C)$ est pair, et par
$$\mu_{\RR}(C)= (-1)^{o(C)} $$
sinon.
\end{defi}

\begin{thm}[Mikhalkin \cite{Mik1}]\label{corr 2}
Pour tous $d\ge 1$ et $\omega$ générique, on a l'égalité
$$W(d)=  \sum_{C\in \TT \mathcal C (d,0,\omega)}\mu_{\RR}(C)$$
\end{thm}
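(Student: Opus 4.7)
Le plan est de suivre la strat\'egie de la d\'emonstration du Th\'eor\`eme \ref{corr 1}, en pr\^etant attention \`a la structure r\'eelle. La construction de Mikhalkin associe \`a $\omega \subset \RR^2$ une configuration $\omega_0$ de $3d-1$ points que l'on peut choisir dans $\RR^2$, et une application naturelle $\phi : \mathcal C(d,0,\omega_0) \to \TT \mathcal C(d,0,\omega)$ v\'erifiant $\mu_\CC(C) = \mathcal Card(\phi^{-1}(C))$. Comme $\omega_0 \subset \RR^2$, l'ensemble $\mathcal C(d,0,\omega_0)$ est stable par conjugaison complexe et $\phi$ est compatible avec cette conjugaison (deux courbes complexes conjugu\'ees ont m\^eme image tropicale). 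Il suffit alors d'\'etablir, pour chaque $C \in \TT \mathcal C(d,0,\omega)$, l'\'egalit\'e
$$S(C) \ := \sum_{\tilde C \in \phi^{-1}(C) \cap \RR \mathcal C(d,0,\omega_0)} (-1)^{m(\tilde C)} \ = \ \mu_\RR(C),$$
car en sommant sur $C \in \TT \mathcal C(d,0,\omega)$ on obtient alors exactement $W(d)$.

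Je d\'ecomposerais ensuite $S(C)$ en contributions locales, une par sommet trivalent $s$ de $C$. Sous l'hypoth\`ese de g\'en\'ericit\'e, les rel\`evements complexes de $C$ se d\'ecrivent comme produit cart\'esien de rel\`evements locaux ind\'ependants en chaque sommet, celui en $s$ admettant exactement $\mu_\CC(s)$ choix. De m\^eme, pour un rel\`evement r\'eel $\tilde C$, le nombre $m(\tilde C)$ se d\'ecompose en somme de contributions venant des points doubles r\'eels isol\'es apparaissant au voisinage de chaque sommet. Le calcul se ram\`ene donc \`a un produit
$$S(C) \ = \ \prod_{s \in \Gamma_{0,3}} S_{\mathrm{loc}}(s),$$
et il ne reste plus qu'\`a calculer la contribution locale $S_{\mathrm{loc}}(s)$ d'un sommet trivalent.

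Le c\oe ur de la d\'emonstration est donc l'analyse locale au voisinage d'un sommet trivalent $s$, d'ar\^etes $a_1, a_2, a_3$ de poids $w_1, w_2, w_3$ et de vecteurs primitifs sortants $\vec v_1, \vec v_2, \vec v_3$. Par un changement de coordonn\'ees monomial appropri\'e, le rel\`evement local est d\'ecrit par un mod\`ele binomial, et l'\'enum\'eration des rel\`evements r\'eels ainsi que la d\'etermination du type (isol\'e ou non isol\'e) de chaque point double r\'eel se ram\`ene \`a un calcul explicite sur des racines $w_i$-i\`emes de l'unit\'e. Le lemme local \`a \'etablir est :
\begin{itemize}
\item si $\mu_\CC(s)$ est pair, les $\mu_\CC(s)$ rel\`evements locaux se regroupent par paires complexes conjugu\'ees et contribuent deux \`a deux avec des signes oppos\'es, donc $S_{\mathrm{loc}}(s) = 0$ ;
\item si $\mu_\CC(s)$ est impair, il existe un unique rel\`evement r\'eel local, et son signe vaut $+1$ si $\mu_\CC(s) \equiv 1 \pmod 4$, $-1$ si $\mu_\CC(s) \equiv 3 \pmod 4$.
\end{itemize}
En multipliant ces valeurs sur tous les sommets trivalents, on retrouve bien $\mu_\RR(C)$ tel que d\'efini dans le texte.

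L'obstacle principal est sans conteste cette analyse locale. \'Etablir que seule la classe de $\mu_\CC(s)$ modulo $4$ contr\^ole le signe du rel\`evement r\'eel n\'ecessite une \'etude soigneuse des solutions r\'eelles d'\'equations binomiales \`a coefficients de Puiseux, ainsi qu'une utilisation du th\'eor\`eme de patchworking de Viro pour identifier, parmi les points doubles du rel\`evement, lesquels sont isol\'es et lesquels ne le sont pas. Une fois ce lemme local admis, le passage au produit et la sommation sur les courbes tropicales sont imm\'ediats, et la compatibilit\'e de $\phi$ avec la conjugaison assure que chaque courbe r\'eelle de $\RR \mathcal C(d,0,\omega_0)$ est compt\'ee une unique fois.
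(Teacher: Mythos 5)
Le texte ne d\'emontre pas ce th\'eor\`eme : il est \'enonc\'e comme un r\'esultat de Mikhalkin \cite{Mik1}, et la seule indication fournie est la discussion qui le pr\'ec\`ede, selon laquelle la somme des signes de Welschinger des courbes r\'eelles de $\phi^{-1}(C)$ ne d\'epend que de $C$ et vaut $-1$, $0$ ou $1$. Votre sch\'ema reconstitue fid\`element cette strat\'egie, qui est bien celle de la preuve originale : r\'eduction \`a un calcul sign\'e dans chaque fibre de $\phi$, localisation aux sommets trivalents, puis identification du r\'esultat avec la d\'efinition de $\mu_\RR(C)$ donn\'ee dans le texte (votre lemme local redonne exactement $(-1)^{o(C)}$ lorsque tous les $\mu_\CC(s)$ sont impairs, et $0$ sinon). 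Il n'y a donc pas de divergence d'approche ; en revanche votre argument reste un sch\'ema, puisque le lemme local, qui concentre tout le contenu du th\'eor\`eme, est \'enonc\'e puis admis.

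Deux points m\'eritent d'\^etre corrig\'es ou compl\'et\'es. D'abord, lorsque $\mu_\CC(s)$ est impair, il n'est pas vrai en g\'en\'eral qu'il existe un \emph{unique} rel\`evement r\'eel local : il peut y en avoir plusieurs, dont les signes se compensent partiellement pour donner une somme \'egale \`a $\pm 1$ ; c'est cette somme sign\'ee, et non une unicit\'e, qu'il faut \'etablir dans l'analyse du mod\`ele binomial. Ensuite, la factorisation $S(C)=\prod_{s}S_{\mathrm{loc}}(s)$ suppose implicitement que tous les points doubles r\'eels isol\'es d'un rel\`evement r\'eel $\tilde C$ proviennent des sommets trivalents de $C$ ; or $\tilde C$ poss\`ede aussi des points doubles au-dessus des sommets quadrivalents de $C$. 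Il faut v\'erifier que ceux-ci sont, pour un rel\`evement r\'eel, soit non r\'eels (et group\'es en paires conjugu\'ees), soit r\'eels non isol\'es (intersection transverse de deux branches r\'eelles), et ne contribuent donc pas \`a $m(\tilde C)$. Ces deux points acquis, la compatibilit\'e de $\phi$ avec la conjugaison et la sommation sur $\TT \mathcal C(d,0,\omega)$ concluent comme vous l'indiquez.
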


\begin{exe}
Toutes les courbes tropicales rationnelles sur les Figures \ref{dtes trop},
\ref{Tconcub}a et 
\ref{Tcub0} sont de multiplicité réelle 1 sauf la courbe
de la Figure \ref{Tcub0}a qui est multiplicité réelle 0. Nous retrouvons ainsi
$W(1)=W(2)=1$ et $W(3)=8$. 
\end{exe}

\begin{rem}
Nous pouvons très bien attribuer avec les mêmes règles une
multiplicité réelle à une courbe tropicale nodale irréductible de
\textit{n'importe quel} genre. Une surprise nous attend alors (voir
\cite{IKS3}) :
contrairement à ce qui se passe pour les courbes algébriques réelles,
ces nombres tropicaux ne dépendent pas de $\omega$! On ne comprend
toujours pas les raisons profondes de cette
invariance tropicale.
\end{rem}

\section{Décomposition en étages de courbes tropicales}\label{etage}
Les Théorèmes \ref{corr 1} et \ref{corr 2} réduisent déjà nos problèmes
énumératifs en géométrie algébrique à des problèmes combinatoires : il
suffit de compter certains graphes rectilignes dans  $\RR^2$. Les
décompositions 
en étages sont une étape supplémentaire dans la simplification du
calcul des nombres $N(d,g)$ et $W(d)$.
L'idée est de prendre une configuration  quelconque de points, et
d'éloigner les points les uns des autres dans la direction verticale.
 À 
mesure que les points s'éloignent, nos courbes tropicales  se
``cassent'' en morceaux très simples, et le tour est joué. Cette méthode
peut être vue comme une variante tropicale de la méthode de Caporaso et Harris
en géométrie énumérative complexe.

\subsection{Étages d'une courbe tropicale}
Puisque nous distinguons une direction particulière dans $\RR^2$, les
arêtes d'une courbe tropicale se trouvent naturellement divisées en
deux catégories.
Assez logiquement, nous appelons arêtes \textit{verticales} d'une
courbes tropicale  celles qui sont 
parallèles à la droite d'équation 
$X=0$.

\begin{defi}
Un étage d'une courbe tropicale $C$ est une composante connexe de
$C$ privée de ses arêtes verticales.
\end{defi}
Les mathématiques étant basées sur le monde observé, deux étages seront
naturellement reliés par un \textit{ascenseur}.

\begin{defi}
Une composante connexe d'une courbe tropicale $C$ privée de ses étages
est appelée un ascenseur.
\end{defi}

\begin{rem}
Pour être totalement rigoureux, il faudrait écrire \textit{une
  composante connexe de l'adhérence de ...} dans les deux
définitions précédentes. 
\end{rem}

En général, les ascenseurs sont une union d'arêtes verticales, mais ne
sont pas eux même des arêtes verticales comme on peut le voir sur les
Figures \ref{Tcub0}d, e, ..., i. Puisqu'un sommet quadrivalent d'une courbe tropicale nodale est
l'intersection de deux intervalles, la Définition \ref{ct nodale} nous
permet de parler du poids d'un
ascenseur d'une courbe tropicale nodale.

\begin{exe}
Les ascenseurs, qui sont aussi des arêtes verticales ici, des courbes
tropicales de la Figure \ref{floor} 
sont dessinées en pointillés, leurs étages sont dessinés en traits
pleins. Leur poids est à chaque fois 1.
\end{exe}

\begin{figure}[h]
\begin{center}
\begin{tabular}{ccc}
\includegraphics[height=3cm, angle=0]{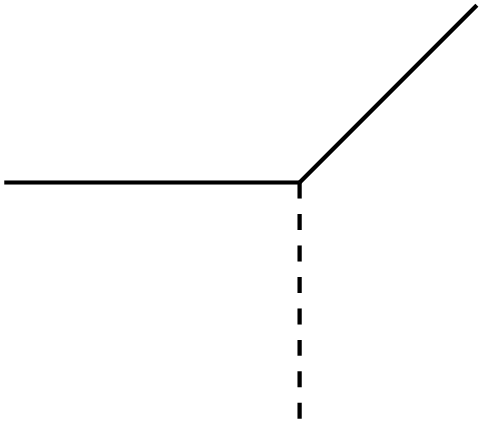}&
\hspace{10ex}  &
\includegraphics[height=4cm, angle=0]{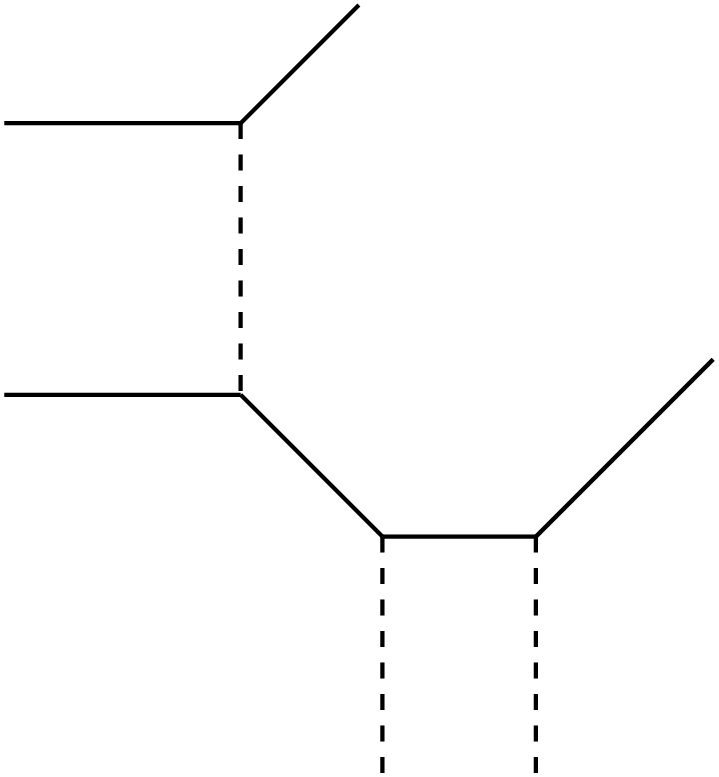}
\\a) Un étage && b) Deux étages

\\
\\ \includegraphics[height=4cm, angle=0]{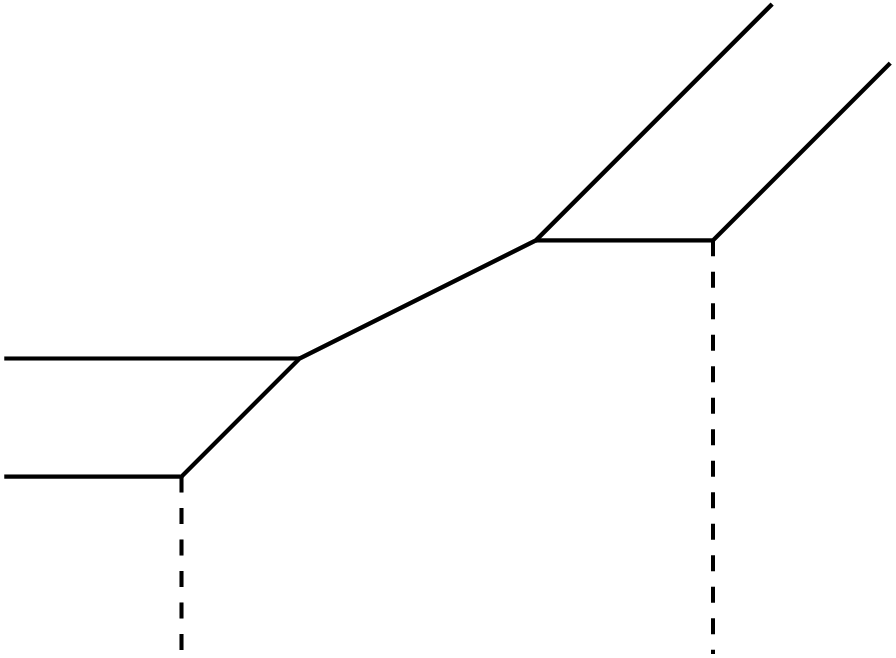}&
\hspace{10ex}  &
\includegraphics[height=6cm, angle=0]{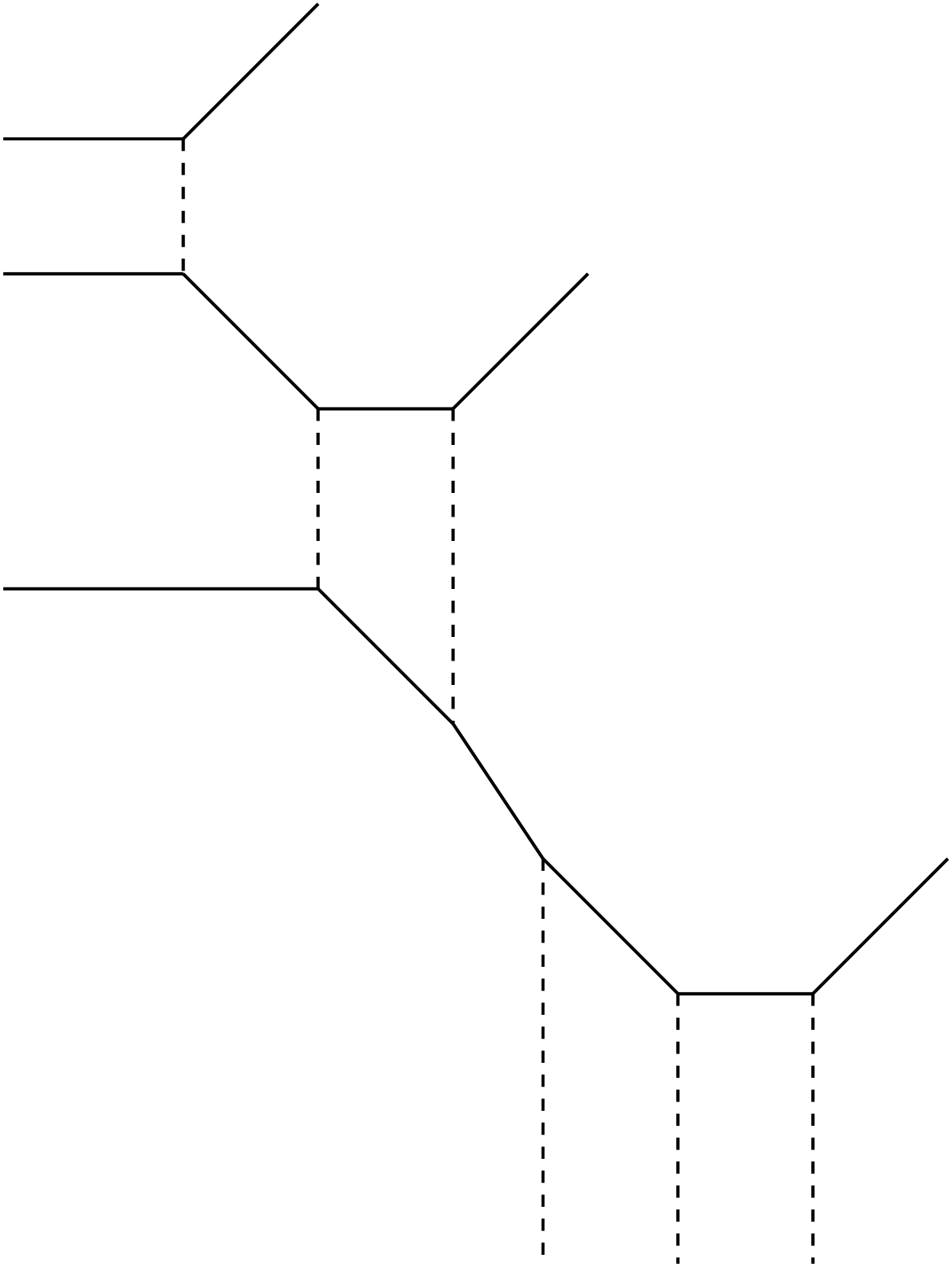}
\\ c) Un étage && d) Trois étages

\end{tabular}
\end{center}
\caption{Étages de courbes tropicales}
\label{floor}
\end{figure}

Comme d'habitude, on se fixe un degré $d$, un
genre $g$, 
 une configuration $\omega=\{p_1,\ldots, p_{3d-1+g}\}$ de
$3d-1+g$ points dans 
$\RR^2$, et on considère 
 l'ensemble 
$\TT \mathcal C (d,g,\omega)$.
Maintenant, envoyons un par un  les points $p_i$
``très haut'', et observons ce qui se passe.

\begin{exe}\label{exf1}
Supposons que $d=1$ et que nous  partons de la configuration de la Figure
\ref{decf 1}a. Lorsque nous déplaçons le point $p_2$ vers le haut, le
sommet de la droite tropicale passant par $p_1$ et $p_2$  se déplace
vers la gauche (Figure \ref{decf 1}b) jusqu'à coïncider avec $p_1$
 (Figure \ref{decf 1}c, notons que la configuration $\{p_1,p_2\}$ 
n'est alors pas générique), puis le sommet de la droite se
déplace vers le haut et $p_1$ se trouve désormais sur l'ascenseur
 (Figure \ref{decf 1}b). Si nous
continuons à déplacer $p_2$ vers le haut, le sommet de la droite
continue à monter, mais rien d'autre ne se passe.
\end{exe}

\begin{figure}[h]
\begin{center}
\begin{tabular}{ccccccc}
\includegraphics[height=3cm, angle=0]{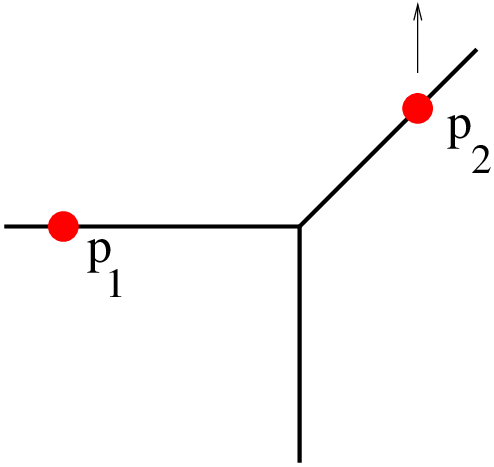}&
\hspace{1ex}  &
\includegraphics[height=3cm, angle=0]{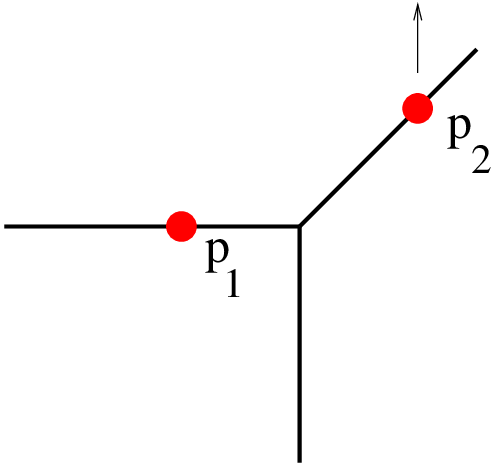}&
\hspace{1ex}  &
 \includegraphics[height=3cm, angle=0]{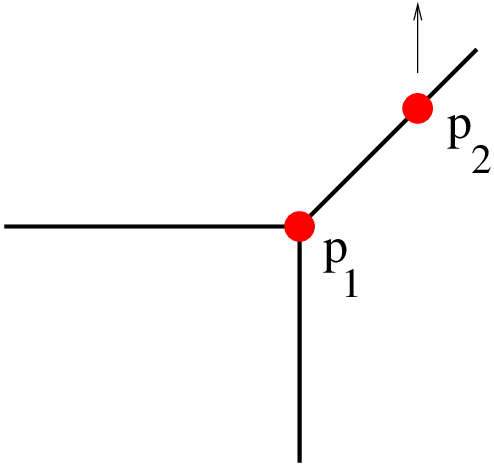}&
\hspace{1ex}  &
\includegraphics[height=2.5cm, angle=0]{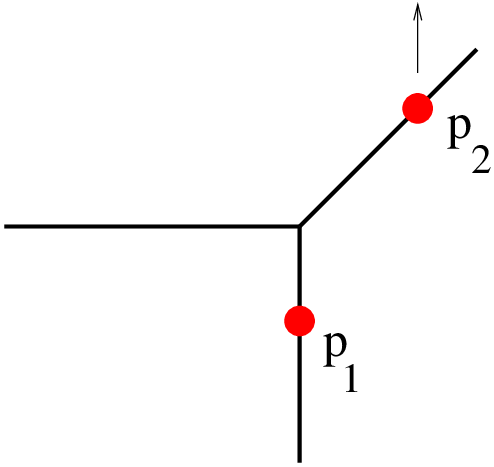}
\\ a) && b) &&c)  && d) 

\end{tabular}
\end{center}
\caption{On monte le point $p_2$}
\label{decf 1}
\end{figure}

\begin{exe}\label{exf2}
Prenons maintenant $d=2$ et  la configuration de la Figure
\ref{decf 2}a. Lorsque nous déplaçons le point $p_5$ vers le haut,
comme à l'Exemple \ref{exf1} nous arrivons à la courbe tropicale de la 
Figure \ref{decf 2}b. Puis, si nous montons le point $p_4$, celui ci se
déplace le long de l'ascenseur, et si nous gardons la distance
entre $p_5$ et $p_4$ très grande, 
rien ne se passe. 
De même, en  montant à présent le point $p_3$, alors nous arrivons au bout
d'un moment à la courbe tropicale de la 
Figure \ref{decf 2}c, et plus rien ne se passe lorsque nous montons les
points $p_2$ puis $p_1$ tout en gardant une distance très grande entre
les points $p_i$.
\end{exe}

\begin{figure}[h]
\begin{center}
\begin{tabular}{ccccc}
\includegraphics[height=5cm, angle=0]{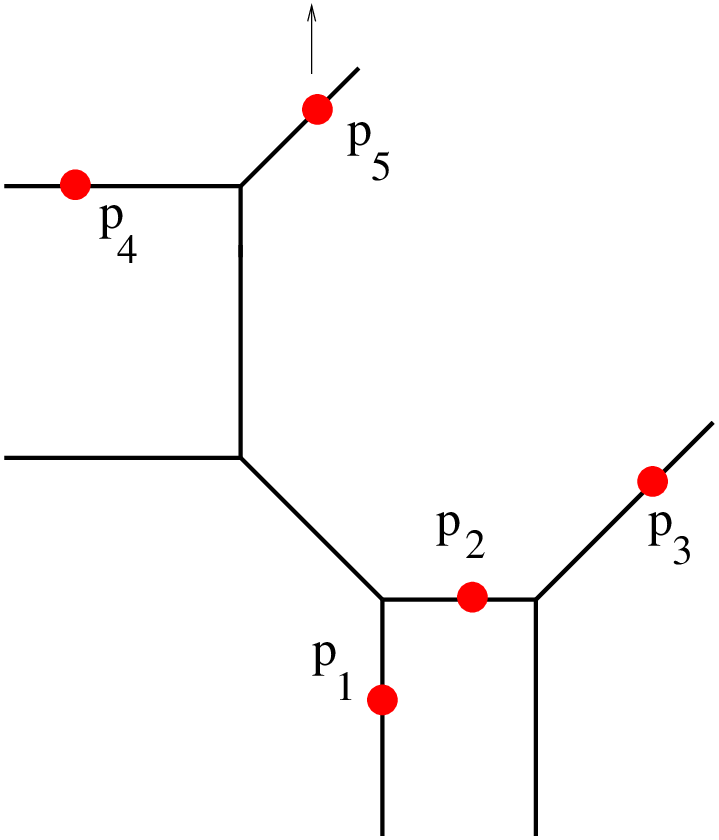}&
\hspace{1ex}  &
\includegraphics[height=5cm, angle=0]{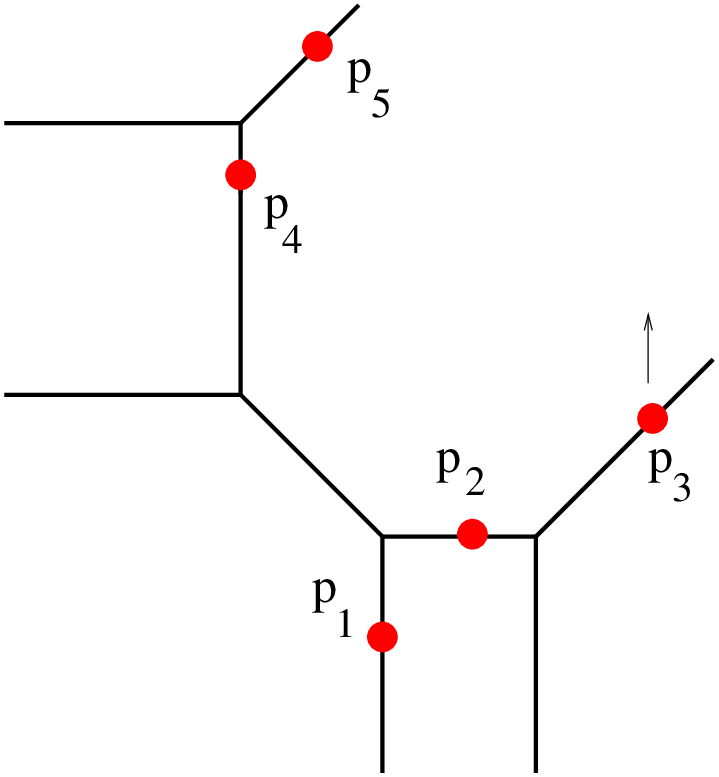}&
\hspace{1ex}  &
\includegraphics[height=5cm, angle=0]{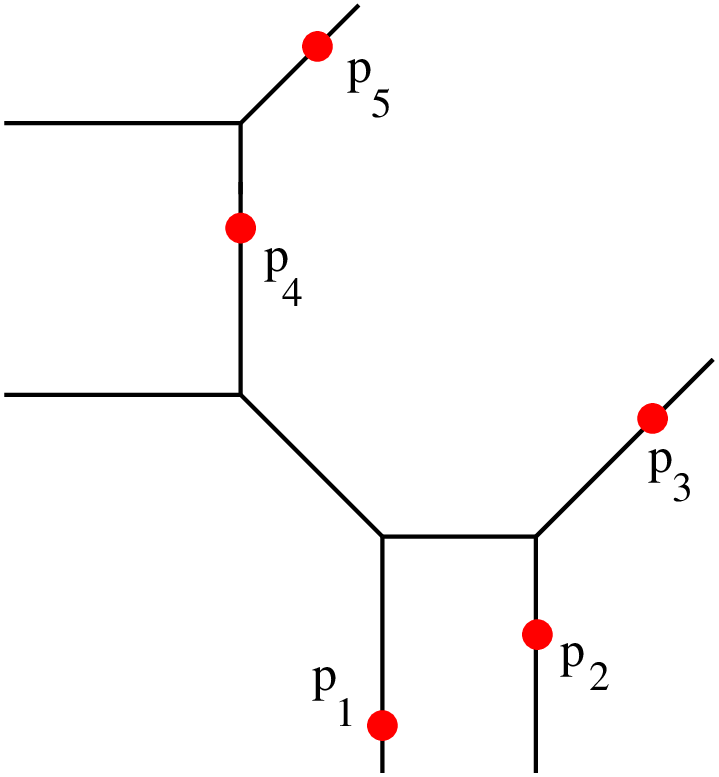}
\\ a) && b)&&c)

\end{tabular}
\end{center}
\caption{On monte les points un par un}
\label{decf 2}
\end{figure}

Dans les deux exemples précédents, on observe le même phénomène. Si
les points $p_i$ sont suffisamment éloignés les uns des autres, alors
chaque étage et chaque ascenseur contient \textit{exactement} un point de
$\omega$. Ainsi, la répartition des points de $\omega$ sur les courbes
tropicales devient très simple! Étudions maintenant ce qui se passe
dans le cas général.

Revenons à une configuration générique $\omega$ quelconque, et soit
$[u;v]$ un segment de $\RR$. La proposition suivante nous dit que si
$\omega\subset [u;v]\times \RR$, alors
toute l'information intéressante à propos des courbes de $\TT \mathcal
C (d,g,\omega)$ se trouve aussi dans le ruban $[u;v]\times \RR$.

\begin{lemma}\label{ruban}
Si tous les points de $\omega$ sont dans $[u;v]\times \RR$, alors tous les
sommets de toutes les courbes tropicales de $\TT \mathcal
C (d,g,\omega)$ sont aussi dans $[u;v]\times \RR$.
\end{lemma}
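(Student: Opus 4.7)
The plan is to argue by contradiction. Suppose some curve $C\in \TT \mathcal C(d,g,\omega)$ has a vertex with $x$-coordinate strictly greater than $v$ (the case $<u$ is symmetric). Let $V$ be the set of vertices of $C$ with $x$-coordinate $>v$, and call \emph{internal} the edges of $C$ having both endpoints in $V$, \emph{bounded boundary} the edges with exactly one endpoint in $V$, and \emph{unbounded boundary} the unbounded edges of $C$ emanating from $V$. Let $E_{\mathrm{ub}}$ and $E_{\mathrm{b}}$ denote the numbers of unbounded and bounded boundary edges respectively. The goal is to produce a continuous one-parameter family of curves inside $\TT \mathcal C(d,g,\omega)$ containing $C$, contradicting Proposition \ref{fini trop}.

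First, a local analysis at a rightmost vertex $s^*\in V$. Every edge adjacent to $s^*$ must have primitive with non-positive $x$-component, except possibly an unbounded $(1,1)$-edge. Combining this with the balancing relation at $s^*$ and the nodality condition (Definition \ref{ct nodale}), one shows that $s^*$ must be trivalent (a quadrivalent rightmost vertex would force two distinct vertical primitive directions, which is impossible) and must carry at least one unbounded $(1,1)$-edge. Summing the balancing relations over all vertices of $V$ cancels the internal contributions, and the $x$-component yields, writing $E_{(1,1)}$ and $E_{(-1,0)}$ for the corresponding numbers of unbounded boundary edges,
\[
E_{(1,1)} - E_{(-1,0)} + \sum_{e\in E_{\mathrm{b}}} w_e(\vec v_e)_x \;=\; 0.
\]
Since each bounded boundary primitive satisfies $(\vec v_e)_x\leq -1$ and $w_e\geq 1$, this gives the key inequality $E_{(1,1)}\geq E_{(-1,0)}+E_{\mathrm{b}}$.

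Second, I introduce the linear space $D_V\subset(\RR^2)^V$ of infinitesimal deformations $\xi=(\xi_s)_{s\in V}$ of the positions of $V$-vertices satisfying: (i) every vertex outside $V$ is fixed; (ii) $\xi_s-\xi_{s'}$ is parallel to the primitive direction of each internal edge $s\,s'$; (iii) $\xi_s$ is parallel to the primitive direction of each bounded boundary edge emanating from $s$. Unbounded boundary edges impose no constraint, since shifting their base vertex merely translates them. A direct parameter count using Definition \ref{ct nodale} to relate the number of vertices to the edge counts, combined with the trivalence of the rightmost vertex and with the inequality from step~1, yields $\dim D_V\geq 1$.

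Finally, any non-zero $\xi\in D_V$ integrates for $\epsilon>0$ small to a deformation $C_\epsilon$ of $C=C_0$ with the same combinatorial type, still a tropical curve of degree $d$ and genus $g$, and still passing through every point of $\omega$: constraint points lie in $\{x\leq v\}$, hence either on an edge entirely contained in $\{x\leq v\}$ (which is unaffected) or on the $\{x\leq v\}$-portion of a bounded boundary edge (whose supporting line is preserved by condition~(iii), keeping the point on the edge). The family $(C_\epsilon)_{\epsilon\in[0,\epsilon_0)}$ then lies in $\TT \mathcal C(d,g,\omega)$, contradicting Proposition \ref{fini trop}. The main obstacle is establishing $\dim D_V\geq 1$: the naive combinatorial count can fail in degenerate configurations such as $V=\{s^*\}$ with a single unbounded and two bounded boundary edges, and ruling these out requires precisely the local balancing analysis at $s^*$, which forces an additional unbounded edge or a bounded vertical edge (which adds another vertex to $V$), thereby raising the count to at least~$1$.
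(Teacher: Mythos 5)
Your skeleton coincides with the paper's: argue by contradiction, use the finiteness statement (Proposition \ref{fini trop}), and analyse the extremal vertex, where balancing together with nodality forces trivalence, an unbounded edge pointing away from the strip, and an adjacent vertical edge. The divergence is in what happens next. The paper stays entirely local: since no point of $\omega$ lies beyond the strip, the vertical edge at the extremal vertex can simply be translated horizontally away from the strip, which exhibits the infinite family explicitly (Figure \ref{deform}); no global count is needed. You instead reduce everything to the inequality $\dim D_V\ge 1$, and this claim --- the entire content of your proof --- is never established. You appeal to ``a direct parameter count'', concede yourself that the naive count fails in degenerate cases, and verify the repair only for $V=\{s^*\}$ with two bounded boundary edges. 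For the record, when every vertex of $V$ is trivalent the naive bound reads $\dim D_V\ge 2|V|-E_{\mathrm{int}}-E_{\mathrm{b}}=\tfrac{1}{2}\left(|V|+E_{\mathrm{ub}}-E_{\mathrm{b}}\right)$ with $E_{\mathrm{int}}$ the number of internal edges, and combining this with your balancing inequality and an integrality remark does give $\ge 1$; but you do not perform this computation, and quadrivalent vertices of $V$ --- which for a nodal curve are crossings rather than genuine vertices and contribute differently to both sides of the count --- are not treated at all.

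There is a second, independent hole: your verification that the deformed curves still contain $\omega$ omits the unbounded boundary edges of direction $(-1,0)$. Such an edge emanates from a vertex at $x>v$, but its support is a horizontal ray that re-enters $[u;v]\times\RR$ and may perfectly well contain a point of $\omega$; the assertion that ``unbounded boundary edges impose no constraint'' is then false, and each such marked ray adds a linear condition to $D_V$ that your count does not absorb (the bound happens to survive, but again only after a computation you have not done). Both difficulties are exactly what the paper's purely local translation avoids, since the unbounded edge at the extremal vertex pointing away from the strip provably carries no point of $\omega$, and only the vertical edge is moved.
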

\begin{proof}
Nous allons raisonner par l'absurde :  s'il
existe une courbe de $\TT \mathcal
C (d,g,\omega)$ ayant un sommet dans le demi plan $\{(x,y)|x<u\}$, nous
allons alors montrer qu'il existe un nombre infini de courbes tropicales
dans $\TT \mathcal
C (d,g,\omega)$ en contradiction avec la Proposition \ref{fini trop}. Le
cas d'une courbe ayant un sommet dans le demi plan $\{(x,y)|x>v\}$ se
traite de la même manière, nous ne le ferons pas ici.

Supposons donc qu'il
existe une courbe tropicale $C$ dans $\TT \mathcal
C (d,g,\omega)$ ayant un sommet dans le demi plan
$\{(x,y)|x<u\}$. Soit $s=(x_0,y_0)$ un sommet  de $C$ dont
l'abscisse est la plus 
petite parmi les sommets de $C$. Par hypothèse, $x_0<u$. 
 Puisqu'il n'existe pas d'autres sommets de
$C$ strictement à gauche de $s$, toutes les arêtes  strictement à
gauche de $s$ sont infinies dans la direction $(-1,0)$,  $s$ est
adjacent à l'une d'entre elles et nous
pouvons choisir  $s$ trivalent. 
De plus, par la
condition d'équilibre, les deux autres arêtes adjacentes à $s$ sont de
vecteurs primitifs sortant $(0,\pm 1)$ et $(1,\alpha)$ (on ne sait rien
sur les poids 
a priori). Puisque aucun point de $\omega$ ne se trouve
à gauche de $s$, nous pouvons alors construire
 facilement un nombre infini de courbes tropicales dans $\TT \mathcal
C (d,g,\omega)$, simplement en translatant l'arête verticale dans la
direction $(-1,0)$. Le cas où l'arête verticale est adjacente à un
autre sommet de $C$ est représenté sur la Figure \ref{deform}.
\end{proof}

\begin{figure}[h]
\begin{center}
\begin{tabular}{c}
\includegraphics[height=6cm, angle=0]{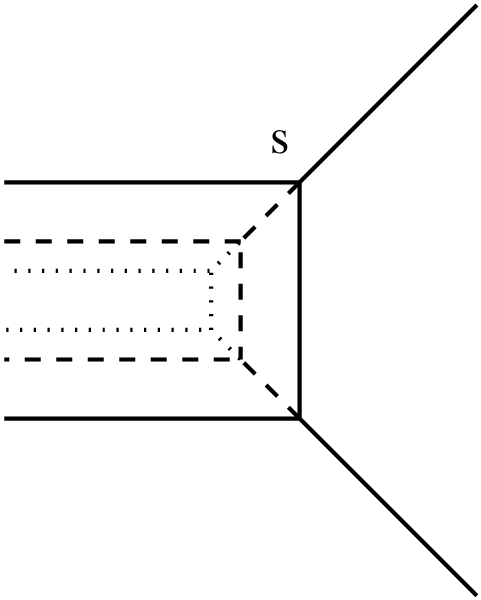}

\end{tabular}
\end{center}
\caption{Une famille infinie de courbes tropicales}
\label{deform}
\end{figure}

Gardons maintenant le segment $[u;v]$ fixe, et éloignons tous les points
les uns des autres comme dans les Exemples \ref{exf1} et
\ref{exf2}. Au bout d'un moment, pour aller d'un point $p_i$ à un
autre en restant sur une courbe tropicale de  $\TT \mathcal
C (d,g,\omega)$, nous sommes obligés de passer par une arête verticale.

\begin{cor}\label{floor dec}
Pour tout segment $[u;v]$, il existe un nombre réel $A$ tel que si
$\omega\subset [u;v]\times\RR$ et 
$|y_i-y_j|>A$ pour tout couple de points distincts $(x_i,y_i)$ et
$(x_j,y_j)$ de $\omega$, alors chaque étage de chaque courbe de $\TT \mathcal
C (d,g,\omega)$ contient exactement un point de $\omega$.
\end{cor}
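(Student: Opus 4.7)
La strat\'egie consiste \`a combiner le Lemme \ref{ruban} avec une majoration combinatoire de l'extension verticale d'un \'etage. Je commencerais par observer que d'apr\`es le Lemme \ref{ruban}, tous les sommets des courbes de $\TT\mathcal C(d,g,\omega)$ se trouvent dans la bande $[u;v]\times\RR$. La Proposition \ref{max som} borne leur nombre par $d^2$, et comme les courbes sont nodales (valence au plus 4), le nombre d'ar\^etes est \'egalement born\'e par une constante $E(d)$. De plus, pour un degr\'e $d$ fix\'e, l'ensemble des directions primitives possibles pour les ar\^etes d'une courbe tropicale est fini (elles correspondent combinatoirement aux c\^ot\'es des subdivisions du triangle de Newton de sommets $(0,0)$, $(d,0)$ et $(0,d)$), ce qui fournit une constante $M(d)$ majorant la valeur absolue de la pente de toute ar\^ete non verticale.

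J'en d\'eduirais que si $F$ est un \'etage d'une courbe de $\TT\mathcal C(d,g,\omega)$, alors $F\cap([u;v]\times\RR)$ a une extension verticale au plus $A:=E(d)M(d)(v-u)$. En effet, chaque ar\^ete born\'ee de $F$ a ses extr\'emit\'es dans la bande, donc une extension verticale au plus $M(d)(v-u)$; chaque ar\^ete non born\'ee non verticale d'un \'etage est n\'ecessairement de direction $(-1,0)$ ou $(1,1)$, d'o\`u une extension verticale au plus $v-u$ dans la bande. Avec ce choix de $A$, si les ordonn\'ees des points de $\omega$ sont deux \`a deux distantes de plus de $A$, aucun \'etage ne peut contenir deux points distincts de $\omega$.

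Pour l'existence d'au moins un point par \'etage, je raisonnerais par l'absurde : si un \'etage $F$ d'une courbe $C\in\TT\mathcal C(d,g,\omega)$ ne contient aucun point de $\omega$, on peut le translater verticalement d'un petit $\epsilon$. Les seules ar\^etes de $C$ non incluses dans $F$ et adjacentes \`a des sommets de $F$ sont, par d\'efinition d'un \'etage, verticales (ce sont des ascenseurs), et il suffit d'ajuster leur longueur. La condition d'\'equilibre, le degr\'e, le genre et le passage par les points de $\omega$ (qui n'\'etaient pas sur $F$) sont alors pr\'eserv\'es. On obtient ainsi, pour tout $\epsilon$ suffisamment petit, une courbe tropicale distincte dans $\TT\mathcal C(d,g,\omega)$, ce qui contredit la finitude de cet ensemble (Proposition \ref{fini trop}).

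L'\'etape la plus d\'elicate me semble \^etre la justification pr\'ecise de cette d\'eformation : il faudrait s'assurer que pour $\epsilon$ assez petit aucun ascenseur adjacent \`a $F$ ne d\'eg\'en\`ere, et que la sym\'etrie requise aux sommets quadrivalents (D\'efinition \ref{ct nodale}) reste pr\'eserv\'ee par la translation (ce qui est imm\'ediat, puisque les vecteurs primitifs sortants sont inchang\'es). L'existence des constantes $E(d)$ et $M(d)$, purement combinatoire, ne devrait pas poser de difficult\'e conceptuelle particuli\`ere.
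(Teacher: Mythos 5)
Votre d\'emonstration est correcte et suit essentiellement la m\^eme strat\'egie que celle du texte : l'existence d'au moins un point par \'etage s'obtient en translatant verticalement un \'etage vide pour contredire la Proposition \ref{fini trop}, et l'unicit\'e d\'ecoule du Lemme \ref{ruban}, de la borne $d^2$ sur le nombre de sommets et d'une majoration uniforme $M(d)$ des pentes des ar\^etes non verticales. Seule la justification de $M(d)$ diff\`ere marginalement (vous invoquez la finitude des directions primitives via le polygone de Newton, le texte utilise directement la condition d'\'equilibre), ce qui ne change rien au fond.
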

\begin{proof}
Soit $C$ une courbe de $\TT \mathcal
C (d,g,\omega)$.
Tout d'abord, pour toute configuration $\omega$, chaque étage de $C$
contient au moins un point de $\omega$. Dans le cas contraire, on pourrait alors
translater  un étage vide de point de $\omega$  dans la direction $(0,1)$ et
construire ainsi une famille infinie de courbes dans $\TT \mathcal
C (d,g,\omega)$ en contradiction avec  la Proposition \ref{fini trop}.

En raison de la condition d'équilibre, toute arête $a$ de vecteur
primitif $\vec v$ d'une courbe
tropicale de degré $d$ vérifie $w(a)\vec v=(\alpha,\beta)$ avec
$|\alpha|$ et $|\beta|$ plus petits que $d$. Cela entraîne  qu'il existe un
nombre $M(d)$ qui majore 
la pente de toute arête
non verticale d'une courbe tropicale de degré $d$. 

Prenons maintenant une configuration $\omega\subset [u;v]\times \RR$.
Soient  $C$
une
courbe tropicale de $\TT \mathcal
C (d,g,\omega)$ et $\gamma$ un chemin dans $C$ reliant deux points
$(x_i,y_i)$ et 
$(x_j,y_j)$ de
$\omega$. Supposons que $\gamma$ ne contienne pas d'arête verticale
de $C$. Les deux points sont donc sur le même étage de $C$.
D'après le Lemme \ref{ruban},
tous les sommets de $C$ sont 
dans le ruban $[u;v]\times \RR$.  Or,  d'après la Proposition \ref{max
  som} le nombre de sommets  de $C$
est plus petit que $d^2$, et  les pentes des arêtes non 
verticales de $C$ sont uniformément bornées par $M(d)$, donc la
quantité $|y_i-y_j|$ est bornée par un nombre $A$ qui ne dépend que du
degré $d$. Ainsi, si   $|y_i-y_j|>A$ pour tous
les couples de points de $\omega$, chaque étage de $C$ ne peut
contenir  plus d'un point de
$\omega$.
\end{proof}

Rappelons que si $C$ est une courbe tropicale nodale, alors 
 le nombre de sommets trivalents de $C$ est noté $\sigma$ (Définition
 \ref{ct genre}).

\begin{cor}\label{nb etage}
Si la distance entre les points de $\omega$ est assez grande, alors
chaque courbe de $\TT \mathcal
C (d,g,\omega)$ a exactement $d$ étages et $2d-1+g$ ascenseurs. De
plus, toute arête d'un étage d'une courbe de $\TT \mathcal
C (d,g,\omega)$ est de vecteur directeur primitif $(1,\alpha)$, avec $\alpha$
dans $\ZZ$, et est de poids 1.
\end{cor}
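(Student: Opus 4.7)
Le plan consiste à pousser plus loin le Corollaire \ref{floor dec} en montrant d'abord que chaque ascenseur contient lui aussi exactement un point de $\omega$, puis à exploiter la condition d'équilibre sur chaque étage. Pour la première étape, un argument analogue à celui du Lemme \ref{ruban} s'applique : si un ascenseur $A$ d'une courbe $C\in\TT \mathcal C (d,g,\omega)$ ne contenait aucun point de $\omega$, on pourrait le translater horizontalement d'un petit $\delta x$ en faisant glisser ses extrémités le long des arêtes non verticales des étages adjacents (sa longueur s'ajustant par la différence des pentes), produisant une famille continue de courbes dans $\TT \mathcal C (d,g,\omega)$, en contradiction avec la Proposition \ref{fini trop}. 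Combiné au Corollaire \ref{floor dec} et à $|\omega|=3d-1+g$, ceci force $N_F+N_A=3d-1+g$, où $N_F$ et $N_A$ désignent les nombres d'étages et d'ascenseurs de $C$.

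J'analyserai ensuite la structure d'un étage $F$ via la condition d'équilibre projetée sur la coordonnée $X$ (les arêtes verticales y contribuant trivialement) : en chaque sommet de $F$, la somme pondérée des composantes $X$ des vecteurs primitifs sortants des arêtes de $F$ est nulle. Sommant sur tous les sommets de $F$, les contributions des arêtes bornées s'annulent deux à deux et il reste $\sum_{e\text{ non bornée de }F} w(e) v_x(e) = 0$. Comme les arêtes non bornées sont de poids $1$ (par nodalité) et de direction $(-1,0)$ ou $(1,1)$, le nombre $\ell_F$ d'entre elles en direction $(-1,0)$ égale celui en direction $(1,1)$ ; le même raisonnement appliqué au sommet d'abscisse minimale de $F$ montre $\ell_F\ge 1$. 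Comme $\sum_F \ell_F = d$, il vient $N_F\le d$, avec égalité si et seulement si $\ell_F=1$ partout. Pour exclure le cas $\ell_F\ge 2$, j'invoquerai un nouvel argument de déformation : un étage avec $\ell_F\ge 2$ admet strictement plus de paramètres continus (les hauteurs relatives de ses branches non bornées) que la contrainte d'un unique point de $\omega$ ne peut en fixer, donnant une famille infinie, encore en contradiction avec la Proposition \ref{fini trop}. On en déduit $N_F=d$ et donc $N_A=2d-1+g$.

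Une fois $\ell_F=1$ établi, la description des arêtes d'un étage suit par récurrence en partant du sommet trivalent d'abscisse minimale. En ce sommet, adjacent à l'unique arête non bornée de vecteur $(-1,0)$ et de poids $1$ ainsi qu'à une arête verticale (d'un ascenseur), la condition d'équilibre impose immédiatement à la troisième arête d'avoir vecteur primitif $(1,\alpha)$ avec $\alpha\in\ZZ$ et poids $1$. On propage ensuite le long de l'étage : à chaque sommet trivalent suivant, les deux arêtes non verticales s'équilibrent avec une arête verticale et conservent la forme $(1,\alpha)$ et le poids $1$ ; à chaque sommet quadrivalent, la Définition \ref{ct nodale} impose que les deux arêtes non verticales soient opposées, d'où la même conclusion.

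La principale difficulté sera de rendre rigoureuse l'exclusion du cas $\ell_F\ge 2$ dans la deuxième étape : il faudra identifier explicitement les paramètres de modules d'un étage à plusieurs branches non bornées et vérifier que la contrainte d'un unique point de $\omega$ y laisse au moins une direction continue libre, afin de contredire effectivement la finitude de $\TT \mathcal C (d,g,\omega)$.
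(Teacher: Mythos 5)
Votre d\'emarche reprend correctement deux des trois ingr\'edients n\'ecessaires : la majoration $N_F\le d$ (chaque \'etage porte au moins une ar\^ete non born\'ee de direction $(-1,0)$, et il y en a $d$ en tout) et le d\'ecompte des points ($N_F+N_A\ge 3d-1+g$, puisque chaque \'etage contient exactement un point d'apr\`es le Corollaire \ref{floor dec} et qu'un ascenseur en contient au plus un par g\'en\'ericit\'e des abscisses). La propagation finale de la forme $(1,\alpha)$ et du poids $1$ le long d'un \'etage via la condition d'\'equilibre est \'egalement conforme \`a l'argument du texte. Mais l'\'etape d\'ecisive --- exclure $\ell_F\ge 2$, c'est-\`a-dire forcer $N_F=d$ --- repose chez vous sur un argument de d\'eformation que vous ne menez pas \`a terme et qui, tel quel, ne fonctionne pas : les ``hauteurs relatives des branches non born\'ees'' d'un \'etage ne sont pas des param\`etres libres localis\'es sur cet \'etage, elles sont coupl\'ees au reste de la courbe par les ascenseurs, et rien ne garantit qu'un \'etage \`a plusieurs bouts gauches fournisse une d\'eformation globale pr\'eservant le passage par tous les points de $\omega$. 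Vous identifiez vous-m\^eme cette lacune ; elle est r\'eelle.

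Le texte la contourne par un d\'ecompte purement combinatoire absent de votre proposition : les extr\'emit\'es d'un ascenseur sont des sommets trivalents, un ascenseur born\'e en a deux, un ascenseur non born\'e une seule, et il y a exactement $d$ ascenseurs non born\'es ; d'o\`u $2N_A\le \sigma+d$, et comme $\sigma=2g+3d-2$ par d\'efinition du genre (D\'efinition \ref{ct genre}), on obtient $N_A\le 2d-1+g$. Combin\'ee \`a $N_F\le d$ et \`a $N_F+N_A\ge 3d-1+g$, cette in\'egalit\'e force les deux \'egalit\'es d'un coup, sans aucun argument de d\'eformation. C'est ce recours \`a la relation entre le nombre de sommets trivalents, le degr\'e et le genre qui manque \`a votre preuve ; sans lui (ou sans une version rigoureuse de votre argument de dimension, ce qui reviendrait essentiellement \`a red\'emontrer la Proposition \ref{fini trop} en localisant l'exc\`es de modules sur l'\'etage fautif), la conclusion $N_F=d$ et $N_A=2d-1+g$ n'est pas \'etablie. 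Notez enfin que votre premi\`ere \'etape (tout ascenseur contient un point de $\omega$, par translation horizontale) est plausible mais superflue : l'in\'egalit\'e ``au plus un point par ascenseur'', imm\'ediate par g\'en\'ericit\'e, suffit au d\'ecompte.
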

\begin{proof}
Soit $C$ une courbe tropicale dans $\TT \mathcal
C (d,g,\omega)$, soient $e$ le nombre d'étages de $C$ et $v$ son
nombre d'ascenseurs. Tout d'abord, comme $C$ a exactement $d$
arêtes infinies dans la direction $(-1,0)$ et que chaque étage 
contient au moins une de ces arêtes, nous avons $e\le d$. Nous allons
maintenant montrer que $v\le
2d-1+g$. Les extrémités d'un ascenseur sont soit  deux
sommets trivalents de $C$ (si l'ascenseur est borné), soit un sommet
trivalent de $C$ (si
l'ascenseur n'est pas borné). Comme
$C$ a exactement $d$ ascenseurs infinis, nous avons
l'inégalité 
$2v\le \sigma + d$. Mais par définition du genre d'une courbe tropicale
 $\sigma=2g +3d -2$, donc $v\le 2d-1 +g$ comme annoncé. 

Comme $\omega$ est une
configuration générique, un ascenseur de $C$ ne peut pas
contenir deux points de $\omega$. Sinon, cela voudrait dire que deux
points de $\omega$ ont la même abscisse, ce que l'on peut toujours
éviter. Donc, d'après le Corollaire \ref{floor dec}, la courbe $C$ ne peut pas
contenir plus de $3d-1+g$ points de $\omega$. Mais comme par hypothèse
$C$ contient exactement $3d-1+g$ points de $\omega$, nous avons forcément
les égalités $e=d$ et $v=2d-1+g$. 

Le calcul précédent  implique que tout sommet
trivalent de
$C$ est adjacent à un ascenseur. Maintenant, puisque la dernière assertion
du corollaire 
est vraie par hypothèse pour  les arêtes non bornées, 
 la condition d'équilibre nous assure qu'elle est vraie pour toute
 arête de $C$ contenue dans un étage.
\end{proof}

Le Corollaire \ref{nb etage} implique qu'une courbe
tropicale dans $\TT \mathcal
C (d,g,\omega)$ est entièrement déterminée par  la répartition des
points de $\omega$ sur les ascenseurs et les étages 
de $C$! Dans l'énumération des courbes tropicales de $\TT \mathcal
C (d,g,\omega)$, nous pouvons donc finalement oublier les courbes
tropicales, et uniquement nous rappeler comment les ascenseurs
 relient les étages et comment les points de
$\omega$ se répartissent. Le codage de ces informations est appelé un
\textit{diagramme 
  en étages marqué}.

\subsection{Énumération de diagrammes en étages}

Pour  définir les diagrammes en étages, nous avons besoin de
quelques définition abstraites préalables.

\begin{defi}\label{go}
Un graphe $\Gamma$ connexe orienté est la donnée d'un ensemble fini de points
$\Gamma_0$,  d'une liste finie $\Gamma_1$ d'éléments de
$\Gamma_0\times\Gamma_0$ et d'une liste finie $\Gamma_1^{\infty}$
d'éléments de $\Gamma_0$,
tels qu'étant donnés  deux points distincts $t_1$ et $t_2$ de $\Gamma_0$, il
existe une suite d'éléments $s_1=t_1$, $s_2$, $\ldots$, $s_{k-1}$,
$s_k=t_2$ de $\Gamma_0$ vérifiant $(s_i,s_{i+1})\in\Gamma_1$ ou
$(s_{i+1},s_{i})\in\Gamma_1$ pour tout $i$.

 Les éléments de $\Gamma_0$ (resp. $\Gamma_1$,
 $\Gamma_1^{\infty}$) sont appelés les 
sommets (resp. arêtes bornées, arêtes non bornées) de $\Gamma$. 
\end{defi}
Les graphes définis ici sont plus généraux que ceux de la Définition
\ref{rect}. Par exemple, ces  nouveaux graphes sont \textit{abstraits} et
ne vivent pas a priori dans $\RR^2$.
On peut cependant, et c'est ce que nous ferons dans ce texte,  se représenter un
graphe géométriquement : les sommets $s$  sont des
points (en fait des ellipses ici), les arêtes bornées $(s_i,s_j)$ sont
des segments joignant les 
sommets $s_i$ et $s_j$ et orientés de $s_i$ vers $s_j$, et les arêtes $s$
non bornées sont des demi-droites dont l'extrémité est $s$ et orientées vers
$s$.  La condition de la Définition \ref{go} signifie que le dessin
ainsi obtenu
doit être connexe.

\begin{rem}
  $\Gamma_1^{\infty}$ est une liste d'éléments de
$\Gamma_0$, et non pas un sous ensemble de $\Gamma_0$! Certains
sommets de $\Gamma_0$ peuvent très bien se répéter dans
$\Gamma_1^{\infty}$, c'est-à-dire que plusieurs arêtes non bornées
peuvent arriver sur le même sommet. De même, $\Gamma_1$ est une liste
d'éléments de $\Gamma_0\times\Gamma_0$ et pas un sous ensemble de
$\Gamma_0\times \Gamma_0$.
\end{rem}

\begin{exe}
Des exemples de graphes orientés avec 3 sommets sont représentés sur la Figure \ref{graph or}.
\end{exe}

\begin{figure}[h]
\begin{center}
\begin{tabular}{ccccccc}
\includegraphics[height=3cm, angle=0]{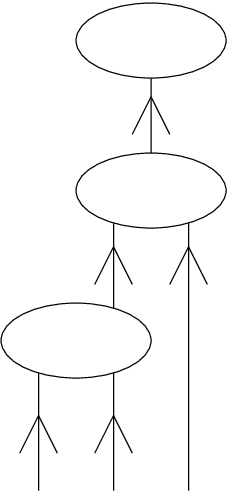}&
\hspace{7ex}  &
\includegraphics[height=3cm, angle=0]{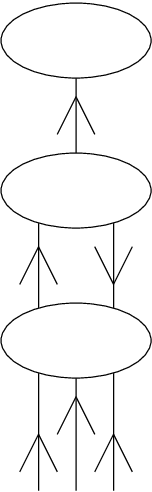}&
\hspace{7ex}  &
\includegraphics[height=3cm, angle=0]{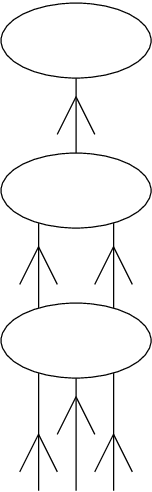}&
\hspace{7ex}  &
\includegraphics[height=3cm, angle=0]{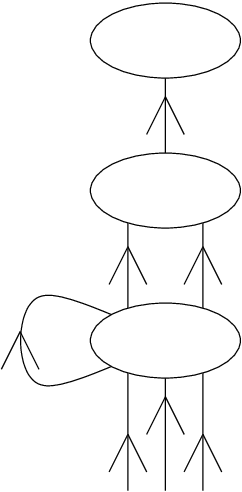}
\\ a) && b)&& c)&& d)

\end{tabular}
\end{center}
\caption{Quelques graphes orientés}
\label{graph or}
\end{figure}

Évidemment, les graphes que nous allons considérer dans la suite vont
représenter comment une courbe tropicale $C$ peut se décomposer en
étages. Les sommets d'un graphe correspondent aux étages de $C$ et les
arêtes correspondent aux ascenseurs.

Certains graphes orientés peuvent avoir des \textit{cycles},
c'est-à-dire qu'il est possible à partir d'un sommet du graphe, de
parcourir 
les arêtes suivant l'orientation et de revenir au même point. Pour la
suite, nous ne devons pas considérer ce type de graphe. 

\begin{defi}
Un graphe orienté est acyclique s'il ne contient pas de suite d'arêtes
$(s_1,s_2)$, $(s_2,s_3)$, $\ldots$, $(s_{k-1},s_k)$, $(s_k,s_1)$.
\end{defi}

\begin{exe}
Les graphes orientés des Figures \ref{graph or}a et c sont
acycliques, mais pas ceux des Figures \ref{graph or}b et d.
\end{exe}

Il ne reste plus qu'à identifier les conditions pour qu'un graphe
acyclique code une décomposition en étages d'une courbe tropicale.

\begin{defi}
Soient $d\ge 1$ et $g\ge 0$. Un diagramme en étages $\mathcal D$ de
degré $d$ et de 
genre $g$ est la donnée d'un graphe acyclique $\Gamma$ et d'une
application appelée poids $w: \Gamma_1\cup \Gamma_1^\infty \to \NN^*$ tels que
\begin{itemize}
\item[$\bullet$] le graphe $\Gamma$ a $d$ sommets, $d-1+g$ arêtes
  bornées et $d$ arêtes non bornées,
 
\item[$\bullet$] pour tout sommet $s$ de $\Gamma$, si $a_1,\ldots,a_k$
  (resp. $b_1,\ldots,b_l$)
  sont les arêtes entrantes (resp. sortantes) adjacentes à $s$, alors
$$\sum_{i=1}^k w(a_i) - \sum_{i=1}^l w(b_i)=1 $$
\end{itemize}

\end{defi}

\begin{rem}
Cette définition implique que toutes les arêtes non bornées d'un
diagramme en étages sont de poids 1.
\end{rem}

Afin d'alléger un peu les notations, nous confondrons dans la suite un
diagramme en étage $\mathcal D$ et le graphe acyclique $\Gamma$ sous-jacent.
Comme pour les courbes tropicale on ne marque le poids d'une arête d'un
diagramme en étages uniquement lorsque celui ci est au moins 2. De plus, comme
les diagrammes en étages sont acycliques, on les oriente implicitement
dans
les dessins du
bas vers le haut.

\begin{exe}
Les graphes orientés des Figures \ref{graph or}a et c
 sont des diagrammes en
étages. Leur  degré est 3 et leur genre est respectivement 0 et 1. D'autres
diagrammes en étages sont représentés à la Figure 
\ref{FD1} (l'orientation est implicite du bas vers le haut).

\end{exe}
\begin{figure}[h]
\begin{center}
\begin{tabular}{ccccccccc}
\includegraphics[height=1.3cm, angle=0]{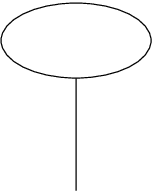}&
\hspace{1ex}  &
\includegraphics[height=2cm, angle=0]{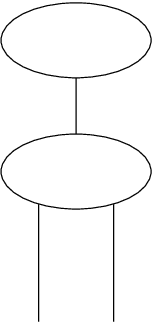}&
\hspace{1ex}  &
\includegraphics[height=3cm, angle=0]{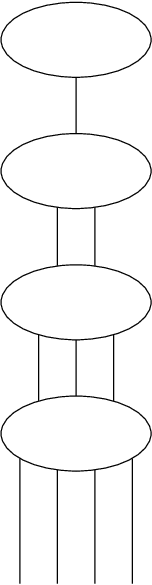}&
\hspace{1ex}  &
\includegraphics[height=3cm, angle=0]{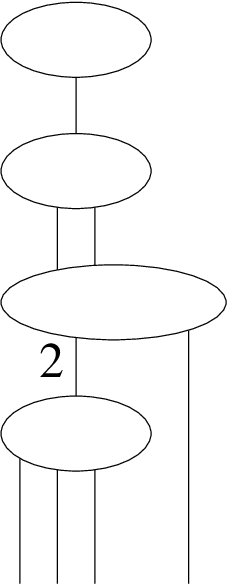}&
\hspace{1ex}  &
\includegraphics[height=3cm, angle=0]{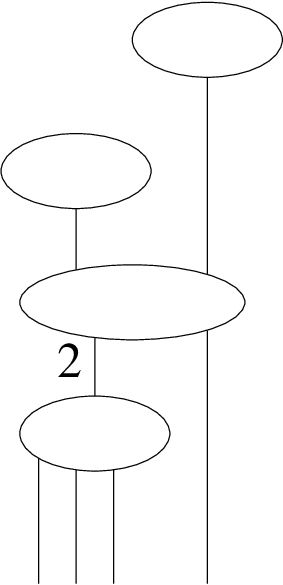}
\\ a)$d=1$, $g=0$ && b) $d=2$, $g=0$ &&c) $d=4$, $g=3 $ && d) $d=4$,
$g=1 $ &&e) $d=4$, $g=0 $

\end{tabular}
\end{center}
\caption{Quelques diagrammes en étages}
\label{FD1}
\end{figure}

Pour terminer, il ne nous reste plus qu'à coder la répartition des
points de $\omega$ sur un diagramme en étages. L'orientation d'un
diagramme en étages $\mathcal D$ nous induit un ordre partiel sur celui
ci : étant donné deux points  $p$ et $q$ de $\mathcal D$, on dit
que $q$ est plus grand que $p$ si on peut aller de $p$ à $q$ en
``suivant les flèches''. Plus précisément, s'il existe un chemin orienté
dans $\mathcal D$ allant de $p$ à $q$.

\begin{exe}
Sur la Figure \ref{FD2}a, le point 2 est plus grand que 1. Sur la
Figure \ref{FD2}b, le point 1 est plus grand que 2. Sur la Figure
\ref{FD2}e, les points 1 et 5 ne sont pas comparables.
\end{exe}

Nous dirons qu'une application  $m: \{1,\ldots,3d-1+g\}\to \mathcal D$
est croissante si $m(i)\ge m(j)$ implique que $i\ge j$.

\begin{defi}
Un diagramme en étages  marqué de degré $d$ et de genre $g$ est la
donnée d'un diagramme en étages $\mathcal D$ de degré $d$ et de genre $g$
munit d'une bijection croissante
$m:\{1,\ldots,3d-1+g\}\to \mathcal D$.
\end{defi}

\begin{exe}
La Figure \ref{FD2}  représente des diagrammes en étages
munis d'une bijection $m:\{1,\ldots,3d-1+g\}\to \mathcal D$. 
La Figure \ref{FD2}b n'est pas un diagramme en étages marqué car
l'application $m$
n'est pas croissante. Les Figures \ref{FD2}a, c, d, e et f sont
des diagrammes en étages marqués.
\end{exe}

\begin{figure}[h]
\begin{center}
\begin{tabular}{ccccccccccc}
\includegraphics[height=1.3cm, angle=0]{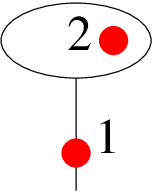}&
\hspace{5ex}  &
\includegraphics[height=1.3cm, angle=0]{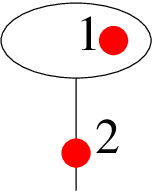}&
\hspace{5ex}  &
\includegraphics[height=2cm, angle=0]{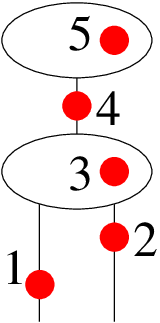}&
\hspace{5ex}  &
\includegraphics[height=2cm, angle=0]{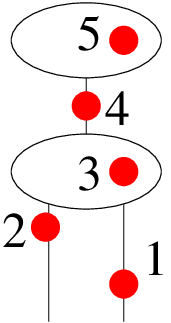}&
\hspace{5ex}  &
\includegraphics[height=3cm, angle=0]{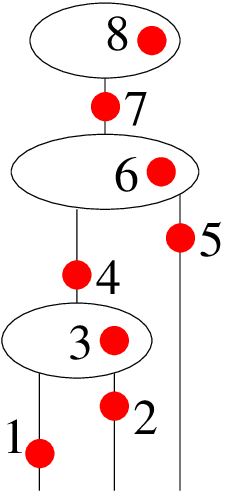}&
\hspace{5ex}  &
\includegraphics[height=3cm, angle=0]{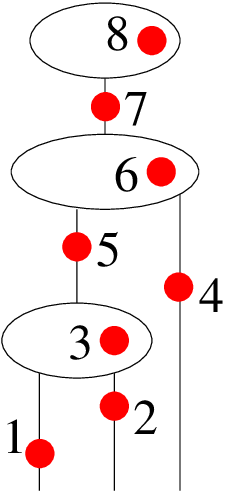}
\\ a) && b)&&c)&& d)&&e)&&f)

\end{tabular}
\end{center}
\caption{Marquages}
\label{FD2}
\end{figure}

Enfin, pour être vraiment rigoureux, il faut considérer  les
diagrammes en étages marqués \textit{à isomorphisme près}.

\begin{defi}
Deux diagrammes en étages marqués $(\mathcal D,m)$ et $(\mathcal D',m')$
sont dits isomorphes s'il existe une bijection $\phi: \Gamma_0\to
\Gamma_0'$ telle que
\begin{itemize}
\item[$\bullet$]l'application $(s_1,s_2)\mapsto (\phi(s_1),\phi(s_2))$
  est une bijection de $\Gamma_1$ dans $\Gamma_1'$,
\item[$\bullet$]l'application $s\mapsto \phi(s)$
  est une bijection de $\Gamma_1^\infty$ dans ${\Gamma_1^{\infty}}'$,
\item[$\bullet$] $w= w'\circ \phi$,

\item[$\bullet$]  $m= m'\circ \phi$.

\end{itemize}

\end{defi}
Malgré son apparente aridité, cette définition sert juste à identifier
les diagrammes en étages marqués que l'on a intuitivement envie d'identifier.

\begin{exe}
Les diagrammes en étages marqués des Figures \ref{FD2}c et d sont
isomorphes, mais pas ceux  des Figures \ref{FD2}e et f.
\end{exe}

On note $\mathcal E(d,g) $ l'ensemble des classes d'équivalence des
diagrammes en 
étages marqués de degré $d$ et de genre $g$. Dans la suite, nous
confondrons un diagramme en étages marqué et sa 
classe d'équivalence. Afin d'alléger les notations, nous noterons aussi
simplement par $\mathcal D$ un diagramme en étages marqué lorsque cela
ne prêtera pas à confusion.

De même que dans le cas des courbes tropicales, il faut associer une
multiplicité réelle et complexe à un diagramme en étages marqué.
\begin{defi}
Soit $\mathcal D$ un diagramme en étages marqué. Alors la multiplicité
complexe de  $\mathcal D$, notée $\mu_\CC(\mathcal D)$, est définie
par
$$\mu_\CC(\mathcal D)=\prod_{a\textrm{ arête de }\mathcal D} w(a)^2$$

La multiplicité
réelle de  $\mathcal D$, notée $\mu_\RR(\mathcal D)$, est définie
par
$$\mu_\RR(\mathcal D)= \mu_\CC(\mathcal D)\ mod\ 2$$
\end{defi}
C'est-à-dire que $\mu_\RR(\mathcal D)=0$ si $\mu_\CC(\mathcal D)$ est
pair, et $\mu_\RR(\mathcal D)=1$ sinon. Notons que les multiplicités 
d'un diagramme en étages marqué ne dépendent en fait que du diagramme en
étages sous-jacent et sont toujours positives ou nulles.

\begin{thm}[Brugallé, Mikhalkin, \cite{Br7}]\label{FD}
Pour tous $d\ge 1$ et  $g\ge 0$, on a
$$N(d,g) =\sum_{\mathcal D\in \mathcal E(d,g) }\mu_\CC(\mathcal D)$$ 
et 
$$W(d) =\sum_{\mathcal D\in \mathcal E(d,0) }\mu_\RR(\mathcal D)$$ 
\end{thm}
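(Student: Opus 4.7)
Mon approche consiste � combiner les Th�or�mes de Correspondance \ref{corr 1} et \ref{corr 2} avec la structure d'�tages �tablie au Corollaire \ref{nb etage}. Comme ces correspondances r�duisent le calcul de $N(d,g)$ et $W(d)$ � l'�num�ration pond�r�e des courbes dans $\TT\mathcal C(d,g,\omega)$, il suffit d'exhiber \emph{une} configuration g�n�rique $\omega$ pour laquelle on peut mettre en bijection $\TT\mathcal C(d,g,\omega)$ avec $\mathcal E(d,g)$ en pr�servant les multiplicit�s complexe et r�elle. Je choisirai pour $\omega$ une configuration \emph{verticalement �tir�e} : les $3d-1+g$ points sont inscrits dans un ruban mince $[u;v]\times\RR$, num�rot�s du bas vers le haut, et s�par�s verticalement par des distances d�passant la constante $A$ du Corollaire \ref{floor dec}.

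Pour une telle $\omega$, chaque courbe $C\in\TT\mathcal C(d,g,\omega)$ se d�compose en $d$ �tages et $2d-1+g$ ascenseurs, chacun contenant exactement un point de $\omega$, et tout sommet trivalent de $C$ est adjacent � un ascenseur. J'associe � $C$ un diagramme en �tages marqu� $\Phi(C)=(\mathcal D,m)$ en prenant pour sommets de $\mathcal D$ les �tages de $C$, pour ar�tes born�es les ascenseurs born�s orient�s de l'�tage inf�rieur vers l'�tage sup�rieur, et pour ar�tes non born�es les ascenseurs non born�s (qui partent tous vers $-\infty$ et rejoignent donc un unique �tage). Les poids des ascenseurs deviennent ceux des ar�tes, et le marquage $m$ indexe les points de $\omega$ selon l'�tage ou l'ascenseur qui les contient.

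Reste � v�rifier que $\Phi$ est bien d�finie comme application � valeurs dans $\mathcal E(d,g)$, puis bijective. L'acyclicit� provient de l'orientation ascendante des ascenseurs, et la croissance du marquage du fait que les points sont num�rot�s par ordre croissant d'ordonn�e. Pour la condition d'�quilibre en un sommet $F$ de $\mathcal D$, je sommerais les conditions d'�quilibre aux sommets trivalents de l'�tage correspondant : les ar�tes internes de $F$ se t�lescopent, les deux ar�tes non born�es de $F$ (de vecteurs primitifs $(-1,0)$ et $(1,1)$, d'apr�s le Corollaire \ref{nb etage}) contribuent $(0,1)$, et chaque ascenseur de poids $w$ rattach� � $F$ contribue $\pm w\cdot(0,1)$ selon qu'il monte ou descend ; la projection sur l'axe vertical livre pr�cis�ment l'�quation voulue. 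L'obstacle principal sera la construction de l'application inverse : �tant donn�s $(\mathcal D,m)$, il faut reconstruire de mani�re unique une courbe tropicale passant par $\omega$, ce qui revient � un syst�me affine que l'acyclicit� du diagramme permet de r�soudre �tage par �tage de bas en haut, en propageant les contraintes issues des points de $\omega$ port�s par chaque ascenseur.

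Enfin, la comparaison des multiplicit�s est quasi-imm�diate. En un sommet trivalent $s$ adjacent � un ascenseur de poids $w$, la formule $\mu_\CC(s)=w(a_1)w(a_2)|\det(\vec v_1,\vec v_2)|$ appliqu�e aux deux ar�tes de l'�tage (de poids $1$ et de vecteurs primitifs $(\pm 1,\alpha_i)$) donne $\mu_\CC(s)=w$ via la condition d'�quilibre. Un ascenseur born� de poids $w$ a deux extr�mit�s trivalentes, donc contribue $w^2$ � $\mu_\CC(C)$, et un ascenseur non born� (de poids $1$) contribue $1=1^2$ ; on retrouve $\mu_\CC(C)=\prod_a w(a)^2=\mu_\CC(\mathcal D)$. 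Pour la multiplicit� r�elle, chaque ascenseur born� de poids $w\equiv 3\pmod 4$ produit deux sommets trivalents compt�s dans $o(C)$, donc $o(C)$ est toujours pair ; ainsi $\mu_\RR(C)=1$ lorsque tous les poids sont impairs (i.e.\ $\mu_\CC(C)$ impair) et $0$ sinon, ce qui co�ncide avec $\mu_\RR(\mathcal D)=\mu_\CC(\mathcal D)\bmod 2$.
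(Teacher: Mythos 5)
Votre d�monstration est correcte et suit pour l'essentiel la m�me route que celle du texte : r�duction aux Th�or�mes \ref{corr 1} et \ref{corr 2}, bijection entre $\TT\mathcal C(d,g,\omega)$ et $\mathcal E(d,g)$ pour une configuration verticalement �tir�e via les Corollaires \ref{floor dec} et \ref{nb etage}, puis identification des multiplicit�s ($\mu_\CC(s)=w$ en chaque sommet trivalent, d'o� $\prod_a w(a)^2$, et parit� de $o(C)$ pour le cas r�el). Vous explicitez m�me un peu plus que le texte la v�rification de la condition d'�quilibre des diagrammes et la construction de l'application inverse, que le texte se contente d'invoquer.
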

\begin{proof}
Soit $\omega$ une configuration générique de $3d-1+g$ points de
$\RR^2$. 
D'après les Corollaires \ref{floor dec} et \ref{nb etage}, si les
distances entre les points de $\omega$ sont assez grandes
il y a une
bijection naturelle entre les éléments de $ \mathcal E(d,g)$ et les courbes
tropicales de  $\TT\mathcal C(d,g,\omega)$. Il reste  à déterminer la
multiplicité d'une courbe tropicale décomposée en étages. Soient $C$
une courbe tropicale de $\TT\mathcal C(d,g,\omega)$
et $s$
un
sommet trivalent de $C$. D'après le Corollaire \ref{nb etage}, le
sommet  $s$ se situe sur un étage de
$C$ et est 
adjacent à un ascenseur de poids $w_1$ et à une arête $a$ de
poids 1 et de 
vecteur primitif sortant $(1,\alpha)$. On a donc $\mu_\CC(s)=w_1$. Comme
tous les ascenseurs bornés sont adjacents à deux
sommets, et que tous les ascenseurs non bornés sont
de poids 1, on a bien que $\mu_\CC(C)$ est égale au produit des carrés
des poids
des ascenseurs.

Si un des ascenseurs de $C$ est de poids pair, alors
$\mu_\RR(C)=0$. Sinon, pour la même raison que dans le calcul de
$\mu_\CC(C)$, la multiplicité réelle $\mu_\RR(C)$ est positive.

Le théorème découle maintenant des Théorèmes  \ref{corr 1} et
\ref{corr 2}.
\end{proof}

\begin{exe}
Les seuls diagrammes en étages marqués en degrés 1 et 2 sont ceux des
Figures \ref{FD2}a et c. Comme ils sont de multiplicité complexe 1,
nous retrouvons bien $N(1,0)=N(2,0)=W(1)=W(2)=1$.
\end{exe}

\begin{exe}
Tous les diagrammes en étages de degré 3 sont représentés à la Figure
\ref{FD3}.
Celui de la Figure \ref{FD3}a est de genre 1 et il n'en existe qu'un
marquage. Nous retrouvons bien 
$N(3,1)=1$. Tous les autres diagrammes en 
étages de  la Figure \ref{FD3} sont de genre 0. Il existe
respectivement 1, 5 et 3 marquages possibles pour les diagrammes des
Figures \ref{FD3}b, c et d. Nous retrouvons bien $N(3,0)=12$ et $W(3)=8$.
\end{exe}

\begin{figure}[h]
\begin{center}
\begin{tabular}{ccccccc}
\includegraphics[height=3cm, angle=0]{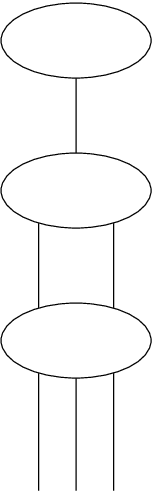}&
\hspace{5ex}  &
\includegraphics[height=3cm, angle=0]{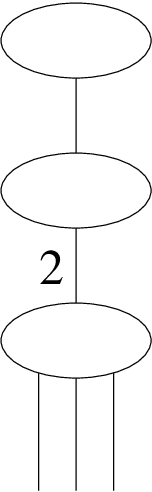}&
\hspace{5ex}  &
\includegraphics[height=3cm, angle=0]{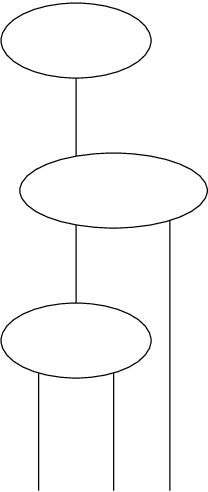}&
\hspace{5ex}  &
\includegraphics[height=2.5cm, angle=0]{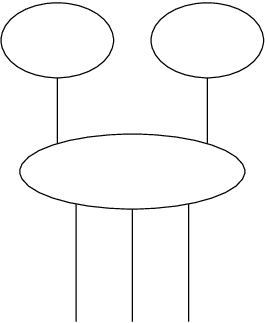}
\\ a) $\mu_\CC=1$ && b) $\mu_\CC=4$, $\mu_\RR=0$
  &&c) $\mu_\CC=1$, $\mu_\RR=1$&& d) $\mu_\CC=1$, $\mu_\RR=1$

\end{tabular}
\end{center}
\caption{Diagrammes en étages de degré 3}
\label{FD3}
\end{figure}

\begin{exo}
Montrer que pour tout $d\ge 1$, le diagramme en étages de la Figure
\ref{FD4} est l'unique diagramme en étages de degré $d$ et de genre
$\frac{(d-1)(d-2)}{2}$ et qu'il en existe un unique marquage. En
déduire
$$N(d, \frac{(d-1)(d-2)}{2})=1$$
\end{exo}

\begin{figure}[h]
\begin{center}
\begin{tabular}{c}
\includegraphics[height=6cm, angle=0]{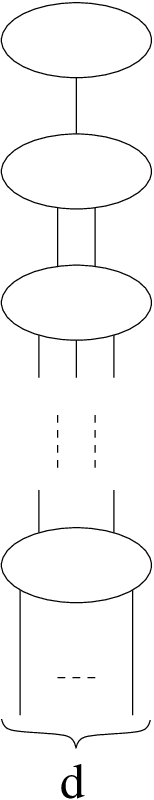}
\end{tabular}
\end{center}
\caption{Diagramme en étages de degré $d$ de genre maximal}
\label{FD4}
\end{figure}

\begin{exo}
Calculer les nombres $N(4,2)=27$, $N(4,1)=225$, $N(4,0)=620$ et
$W(4)=240$ à l'aide des diagrammes en étages.
\end{exo}

\section{Applications}\label{appl}

Nous allons maintenant appliquer le Théorème \ref{FD} pour démontrer
de manière combinatoire quelques résultats sur les nombres $N(d,g)$ et $W(d)$.

\subsection{En géométrie énumérative complexe}\label{appl cplx}

Démontrons la formule non triviale la plus simple sur les nombres $N(d,g)$.

\begin{prop}
Pour tout $d\ge 3$, on a
$$N(d,\frac{(d-1)(d-2)}{2}-1)= 3(d-1)^2 $$
\end{prop}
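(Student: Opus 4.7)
My plan is to apply Theorem \ref{FD} to reduce the computation of $N(d, \binom{d-1}{2} - 1)$ to an enumeration of floor diagrams of degree $d$ and genus $\binom{d-1}{2} - 1$. Since this genus is one less than the maximum, each such diagram will be a small modification of the maximal-genus diagram from the exercise following Theorem \ref{FD} (vertices $V_1, \ldots, V_d$ arranged in a chain, $d - j$ weight-$1$ edges between $V_j$ and $V_{j+1}$, and all $d$ unbounded edges at $V_1$). My first claim is that the diagrams contributing to $N(d, \binom{d-1}{2} - 1)$ fall into exactly three families, for a total of $2d - 3$ isomorphism classes: (A$_i$) with $1 \leq i \leq d - 2$, in which two of the $d - i$ parallel edges between $V_i$ and $V_{i+1}$ are merged into a single weight-$2$ edge (here $\mu_\CC = 4$); (B), in which one edge between $V_1$ and $V_2$ is removed so that $u_1 = d - 1$, $u_2 = 1$, all weights remaining $1$ (here $\mu_\CC = 1$); and (C$_i$) with $1 \leq i \leq d - 2$, obtained by removing one edge $V_i \to V_{i+1}$, one edge $V_{i+1} \to V_{i+2}$, and adding a weight-$1$ skip edge $V_i \to V_{i+2}$ (here $\mu_\CC = 1$).

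To prove that this list is exhaustive, I would start from the observation that connectivity and the balancing condition at $V_d$ together force exactly one weight-$1$ edge to enter $V_d$, then use the identity
\[
\sum_{i < j} W_{ij}(j - i) = \sum_k u_k (d - k) - \binom{d}{2}
\]
combined with $u_k \geq 0$, $\sum_k u_k = d$, and the required bounded-edge count $\binom{d}{2} - 1$. These constraints force the total weight $W = \sum_{i < j} W_{ij}$ to lie in $\{\binom{d}{2} - 1, \binom{d}{2}\}$ and rule out skip edges of length $\geq 3$ or of weight $\geq 2$, as well as any $u$-distribution other than $(u_1, u_2, \ldots) = (d, 0, \ldots)$ (which produces types A and C) or $(d - 1, 1, 0, \ldots)$ (which produces type B).

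Second, I would count the markings of each diagram by computing the number of linear extensions of the underlying element poset and dividing by the order of the automorphism group (which permutes parallel edges of equal weight at each level). In every case the poset is a chain $V_1 < \ldots < V_d$ refined by antichains of parallel edges in each gap, decorated by the unbounded edges hanging below $V_1$ (and $V_2$ in case (B)) or by the skip edge (in case (C$_i$)). An insertion argument --- first enumerate the ``spine'' extensions, then insert the decorating elements into admissible positions --- combined with cancellation against the automorphism factorials yields
\[
m(A_i) = d - i - 1, \qquad m(B) = 2d - 1, \qquad m(C_i) = 2(d - i) - 1,
\]
which I would verify on the cases $d = 3$ and $d = 4$. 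Summing the contributions then gives
\[
4 \sum_{i=1}^{d-2}(d - i - 1) + (2d - 1) + \sum_{i=1}^{d-2}(2(d - i) - 1) = 2(d-1)(d-2) + (2d - 1) + (d^2 - 2d) = 3(d - 1)^2.
\]
The main obstacle is the case analysis of the first step --- in particular carefully eliminating diagrams with long skip edges or heavier weights --- while the marking counts in the second step reduce, after factoring out automorphisms, to a short combinatorial identity.
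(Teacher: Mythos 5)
Votre démonstration suit essentiellement la même démarche que celle du texte : réduction aux diagrammes en étages via le Théorème~\ref{FD}, énumération des modifications du diagramme de genre maximal (votre famille $A_i$ correspond à la Figure~\ref{baisse genre}a, et vos familles $B$ et $C_i$ réunies à la Figure~\ref{baisse genre}b, avec les mêmes multiplicités et les mêmes nombres de marquages $\{1,\dots,d-2\}$ et $\{3,5,\dots,2d-1\}$), puis sommation. Votre argument d'exhaustivité via l'identité $\sum_{i<j}W_{ij}(j-i)=\sum_k u_k(d-k)-\binom{d}{2}$ est correct et rend explicite ce que le texte se contente d'affirmer (``il n'est pas difficile de voir...'').
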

\begin{proof}
Le point de départ est l'unique diagramme en étages de degré $d$ et de
genre maximal. Pour baisser le genre de ce diagramme en étages de 1,
nous avons deux possibilités représentées sur la Figure \ref{baisse
  genre}. De plus, il n'est pas difficile de voir que tous les
diagrammes en étages de genre $\frac{(d-1)(d-2)}{2}-1$ s'obtiennent de
cette manière. Il ne nous reste plus qu'à compter de combien de
manières nous pouvons 
marquer ces nouveaux diagrammes en étages. 

Nous avons $i-1$ possibilités pour le marquage dans le cas de la
Figure \ref{baisse 
  genre}a, et  $2i+1$
possibilités  dans le cas de la Figure \ref{baisse 
  genre}b. De plus, dans les deux cas $i$  varie de 1 à $d-1$ et
il ne nous reste plus qu'à faire un petit calcul :
$$ \begin{array}{lll}
N(d,\frac{(d-1)(d-2)}{2}-1)&=& \sum_{i=1}^{d-1}4(i-1)
+\sum_{i=1}^{d-1}(2i+1)
\\
\\ &=& 6\sum_{i=1}^{d-1}(i-1) + 3(d-1) 
\\
\\ &=& 3(d-1)(d-2) + 3(d-1) 
\\
\\&=& 3(d-1)^2
\end{array}$$
\end{proof}

\begin{figure}[h]
\begin{center}
\begin{tabular}{ccc}
\includegraphics[height=2cm, angle=0]{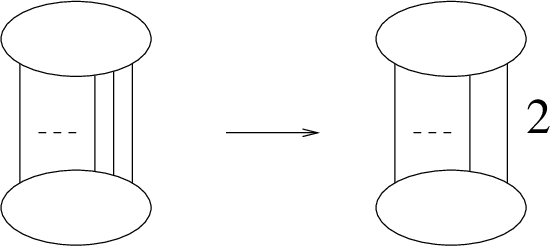}&
\hspace{10ex}  &
\includegraphics[height=1.5cm, angle=0]{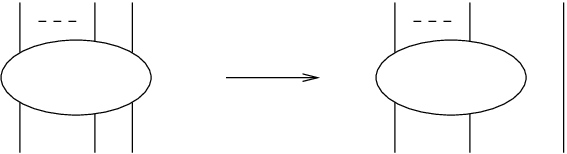}
\\ a) $i$ arêtes $\to$ $i-1$ arêtes &&b)  $i$ arêtes sortantes $\to$
$i-1$ arêtes sortantes
\\ $\mu_\CC=4$ && $\mu_\CC=1$
\end{tabular}
\end{center}
\caption{Diminuer de 1 le genre du diagramme en étages de genre maximal}
\label{baisse genre}
\end{figure}

\begin{exo}\label{exo CH}
Il est possible de démontrer la formule de Caporaso et Harris grâce
aux diagrammes en étages de la manière suivante.
Tout d'abord, il est nécessaire de considérer des diagrammes en étages
plus généraux. Les arêtes infinies peuvent être de n'importe quel
poids, et l'application $m$ peut envoyer des points à l'infini. Ensuite,
il faut associer une multiplicité complexe à de tels diagrammes en
étages marqués de
telle sorte qu'en envoyant un par un les points à l'infini, une
formule de récurrence apparaisse. 

Pour s'inspirer, on pourra se reporter à la preuve tropicale  de la
formule de Caporaso et Harris par
Gathmann et Markwig dans \cite{GM2}. 
\end{exo}

\subsection{Comportement des invariants de  Welschinger}\label{appl reel}

De par leur définition,  les nombres
$N(d,0)$ et $W(d)$ sont trivialement égaux modulo 2. En fait,
Mikhalkin a observé 
que cette égalité est 
vraie modulo 4.

\begin{prop}[Mikhalkin]
Pour tout $d\ge 1$ on a 
$$W(d)= N(d,0) \ mod \ 4 $$
\end{prop}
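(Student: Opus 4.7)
Le plan est d'utiliser le Théorème \ref{FD}, qui exprime à la fois $N(d,0)$ et $W(d)$ comme des sommes indexées par le même ensemble $\mathcal E(d,0)$ de diagrammes en étages marqués. Il suffira alors de démontrer la congruence ponctuelle $\mu_\CC(\mathcal D) \equiv \mu_\RR(\mathcal D) \pmod 4$ pour chaque diagramme $\mathcal D \in \mathcal E(d,0)$, puis de sommer sur $\mathcal D$.

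Cette congruence ponctuelle repose sur la structure très particulière de la multiplicité complexe $\mu_\CC(\mathcal D) = \prod_a w(a)^2$, qui est un produit de \textit{carrés}. Je distinguerais deux cas selon la parité des poids des arêtes de $\mathcal D$. Si tous les poids sont impairs, alors chaque facteur $w(a)^2$ est le carré d'un entier impair, donc congru à $1$ modulo $8$, et a fortiori modulo $4$ ; le produit $\mu_\CC(\mathcal D)$ est alors lui-même congru à $1$ modulo $4$. Or par définition $\mu_\RR(\mathcal D) = 1$ lorsque $\mu_\CC(\mathcal D)$ est impair, ce qui est bien le cas ici. Si au contraire il existe au moins une arête $a_0$ dont le poids $w(a_0)$ est pair, alors $w(a_0)^2$ est divisible par $4$, donc $\mu_\CC(\mathcal D) \equiv 0 \pmod 4$, et $\mu_\RR(\mathcal D) = 0$ par définition. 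Dans les deux cas la congruence est satisfaite.

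Il n'y a donc essentiellement pas d'obstacle dans cette preuve : une fois le Théorème \ref{FD} invoqué, le résultat se ramène à l'observation élémentaire que la multiplicité complexe est un produit de carrés, ce qui force automatiquement la coïncidence modulo $4$ avec la réduction modulo $2$. Notons au passage que le même argument ne permet pas d'obtenir une congruence modulo $8$ : un diagramme en étages possédant une unique arête de poids $2$ (toutes les autres étant de poids $1$) contribue $4$ à $N(d,0)$ et $0$ à $W(d)$, différence qui n'est pas divisible par $8$, de sorte que $4$ est bien le meilleur module pour lequel l'énoncé est vrai en toute généralité.
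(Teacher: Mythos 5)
Votre preuve est correcte et suit essentiellement la même démarche que celle du texte : le Théorème \ref{FD} ramène l'énoncé à la congruence ponctuelle $\mu_\CC(\mathcal D)\equiv\mu_\RR(\mathcal D) \ mod \ 4$, laquelle découle du fait que le carré d'un entier ne dépend modulo $4$ que de sa parité (le texte le formule comme $a=b \ mod \ 2 \Longrightarrow a^2=b^2 \ mod \ 4$ appliqué aux poids avec $b=0$ ou $1$). Votre remarque finale sur l'optimalité du module $4$ est un complément juste, confirmé par $N(3,0)-W(3)=4$.
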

\begin{proof}
C'est la conséquence immédiate de  
$$a=b \ mod \ 2\Longrightarrow  a^2=b^2\ mod \ 4$$
appliqué aux poids des arêtes d'un diagramme en étages et $b=0$ ou $1$.
\end{proof}

Terminons par quelques résultats sur  la suite $(W(d))_{d\ge 1}$.

\begin{thm}[Itenberg, Kharlamov, Shustin \cite{IKS1} \cite{IKS2}]\label{thm:IKS}
La suite $(W(d))_{d\ge 1}$ des invariants de Welschinger vérifie les
propriétés suivantes :

\begin{itemize}
\item[$\bullet$] c'est une suite de nombres strictement positifs,

\item[$\bullet$] la suite est croissante, et strictement croissante à
  partir de $d=2$,

\item[$\bullet$] $\ln  W(d) \sim \ln N(d,0) \sim 3d \ln  d$ lorsque $d$
  tend vers l'infini. 

\end{itemize}
\end{thm}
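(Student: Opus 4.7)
Le plan est d'utiliser le Th\'eor\`eme \ref{FD}, qui exprime $W(d)$ comme une somme de $\mu_\RR(\mathcal D)$ sur les diagrammes en \'etages marqu\'es rationnels de degr\'e $d$. Comme $\mu_\RR(\mathcal D) = \mu_\CC(\mathcal D) \bmod 2 = \prod_a w(a)^2 \bmod 2$, cette multiplicit\'e vaut $1$ si toutes les ar\^etes de $\mathcal D$ sont de poids impair, et $0$ sinon. Ainsi $W(d)$ est exactement le nombre de classes d'isomorphisme de diagrammes en \'etages marqu\'es de degr\'e $d$ et de genre $0$ dont toutes les ar\^etes ont un poids impair, quantit\'e qui est manifestement positive ou nulle.

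Pour la positivit\'e stricte, on consid\`ere l'\emph{\'etoile invers\'ee} $\mathcal E_d$ : un unique sommet $v$ au bas, portant $d$ ar\^etes non born\'ees et reli\'e par $d-1$ ar\^etes born\'ees de poids $1$ aux feuilles $w_1, \ldots, w_{d-1}$ situ\'ees en haut. La condition d'\'equilibre se v\'erifie imm\'ediatement ($d - (d-1) = 1$ en $v$ et $1 - 0 = 1$ en chaque feuille), toutes les ar\^etes sont de poids $1$, donc $\mathcal E_d$ contribue au moins $1$ \`a $W(d)$. Pour la croissance, on construit une injection $\Phi : \mathcal E(d,0) \hookrightarrow \mathcal E(d+1,0)$ pr\'eservant $\mu_\RR$ : \'etant donn\'e $(\mathcal D, m)$, on attache au-dessus de $\mathcal D$ une nouvelle feuille $v'$ reli\'ee par une ar\^ete born\'ee de poids $1$ \`a un sommet maximal canonique $u$ (celui portant le plus grand label), et on adjoint sur $u$ une ar\^ete non born\'ee suppl\'ementaire pour r\'etablir la condition d'\'equilibre ; le marquage prolong\'e $m'$ ins\`ere les trois nouveaux labels canoniquement (le minimum sur la nouvelle ar\^ete non born\'ee, les deux plus grands sur l'ar\^ete born\'ee et sur $v'$). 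Les nouvelles ar\^etes \'etant de poids $1$, on a bien $\mu_\RR(\Phi(\mathcal D, m)) = \mu_\RR(\mathcal D, m)$, d'o\`u $W(d) \le W(d+1)$. La stricte in\'egalit\'e pour $d \ge 2$ s'obtient en observant que $\mathcal E_{d+1}$ n'est pas dans l'image de $\Phi$ : sa construction via $\Phi$ imposerait d'attacher la nouvelle feuille au sommet central $v$ de $\mathcal E_d$, qui n'est pas un sommet maximal, contredisant le choix canonique.

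L'asymptotique se scinde en deux bornes. La majoration $\ln W(d) \le \ln N(d,0) \sim 3d \ln d$ d\'ecoule de $W(d) \le N(d,0)$ et de la Proposition \ref{asy N}. La minoration est sensiblement plus d\'elicate : il s'agit de montrer que la famille des diagrammes en \'etages marqu\'es rationnels \`a poids tous impairs reste assez ``volumineuse'' pour rivaliser avec $N(d,0)$ dans l'\'echelle logarithmique. L'approche d'Itenberg, Kharlamov et Shustin consiste \`a analyser une r\'ecurrence de type Caporaso--Harris pour $W(d)$ (voir Exercice \ref{exo CH reel}), analogue \`a celle utilis\'ee par Di Francesco et Itzykson pour $N(d,0)$, et \`a contr\^oler que les termes faisant intervenir des poids pairs restent d'ordre inf\'erieur. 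C'est pr\'ecis\'ement \`a cette \'etape que r\'eside l'obstacle principal ; une fois la minoration \'etablie, les propri\'et\'es de positivit\'e et de croissance se v\'erifient directement sur les constructions explicites de diagrammes d\'ecrites ci-dessus.
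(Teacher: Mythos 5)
Vos deux premiers points sont essentiellement corrects et proches de l'argument du texte. Pour la positivit\'e et la croissance, le texte proc\`ede dans l'autre sens : il ajoute un \'etage \emph{en bas} du diagramme, greff\'e sur l'ar\^ete infinie portant la plus petite marque (Figure \ref{FDW}a), l\`a o\`u vous greffez une feuille en haut sur le sommet de plus grande marque ; les deux constructions donnent une injection pr\'eservant $\mu_\RR$, et votre diagramme \'etoile $\mathcal E_{d+1}$ joue le r\^ole des Figures \ref{FDW}b et c pour la stricte croissance d\`es $d\ge 2$. Votre reformulation de $W(d)$ comme nombre de diagrammes en \'etages marqu\'es dont tous les poids sont impairs est \'egalement correcte.

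En revanche, le troisi\`eme point n'est pas d\'emontr\'e. Vous n'\'etablissez que la majoration $\ln W(d)\le \ln N(d,0)$ ; la minoration, qui est le contenu r\'eel de l'\'enonc\'e, est renvoy\'ee \`a une r\'ecurrence de type Caporaso--Harris que vous ne menez pas, et vous le reconnaissez vous-m\^eme. Le texte la d\'emontre de fa\c{c}on autonome : il construit par doublements successifs une suite explicite $(\mathcal D_k)$ de diagrammes en \'etages de degr\'e $2^{k-1}$, de genre $0$ et de multiplicit\'e $1$ (Figure \ref{FDWa}), \'etablit la r\'ecurrence
$$\nu(\mathcal D_k)=\frac{\nu(\mathcal D_{k-1})^2}{2}\,C_{3\times 2^{k-1}-4}^{3\times 2^{k-2}-2}$$
pour le nombre $\nu(\mathcal D_k)$ de marquages, et en d\'eduit par la formule de Stirling que $\ln\nu(\mathcal D_k)\sim 3\times 2^{k-1}\ln(3\times 2^{k-1})$ ; comme $\nu(\mathcal D_k)\le W(2^{k-1})\le N(2^{k-1},0)$, la croissance de la suite $(W(d))_{d\ge 1}$ et la Proposition \ref{asy N} permettent ensuite d'interpoler entre les puissances de $2$. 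Sans une telle famille explicite, ou sans mener effectivement la r\'ecurrence que vous invoquez, l'\'equivalence $\ln W(d)\sim 3d\ln d$ reste non prouv\'ee.
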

\begin{proof}
Comme la multiplicité réelle d'un diagramme en étages est positive ou
nulle, les nombres $W(d)$ sont tous positifs ou nuls.  De plus, comme
$W(1)=1$, la positivité des nombres $W(d)$ découlera de la croissance
de la suite  $(W(d))_{d\ge 1}$.

Soit $(\mathcal D_0,m_0)$ un diagramme en étages marqué de degré $d$. Pour
rendre notre raisonnement  plus transparent,
supposons que le marquage $m_0$  va non pas de
$\{1,\ldots,3d-1\}$ dans $\mathcal D_0$, mais de  $\{4,\ldots,3d+2\}$
dans $\mathcal D_0$. Notons que le point $4$ est forcément envoyé sur
une arête infinie de $\mathcal D_0$. À partir de $\mathcal D_0$, nous pouvons
construire un nouveau diagramme en étages marqué $\mathcal D$ de degré
$d+1$ comme
indiqué à la Figure \ref{FDW}a. Les multiplicités réelles de
$\mathcal D_0$ et $\mathcal D$ sont les mêmes et deux diagrammes en étages marqués
différents $\mathcal D_0$  et $\mathcal D_0'$ donnent évidemment deux
diagrammes en étages marqués 
différents $\mathcal D$  et $\mathcal D'$. Nous avons donc montré que
$W(d+1)\ge W(d)$. De plus, si $d\ge 2$, alors le point $5$ est aussi
forcément sur une arête infinie de $\mathcal D_0$. Nous voyons qu'il existe
alors un diagramme en étages marqué de degré $d+1$ qui n'est pas obtenu
à partir d'un diagramme en étages marqué de degré $d$ (voir les Figures
\ref{FDW}b et c), et donc $W(d+1)>W(d)$.

Regardons maintenant l'asymptotique logarithmique de la suite
$(W(d))_{d\ge 1}$. 
Soit $(\mathcal D_k)$ la suite de diagrammes en étages construite de
la manière suivante : $\mathcal D_1$ est le diagramme en étages de
degré 1 et $\mathcal D_k$ est obtenu à partir de $\mathcal D_{k-1}$ en
recollant à chaque arête infinie de $\mathcal D_{k-1}$ le morceau
représenté à la Figure \ref{FDWa}a. Les diagrammes en étages
$\mathcal D_1$, $\mathcal D_2$, $\mathcal D_3$ et $\mathcal D_4$ sont
représentés sur les Figures  \ref{FDWa}b, c, d et e. Le
diagramme en étages $\mathcal D_k$ est de degré $2^{k-1}$ et est de
multiplicité 1. Notons $\nu(\mathcal D_k)$ le nombre de marquages
possibles de $\mathcal D_k$. Si nous oublions les 3 points les plus haut,
nous voyons apparaître la relation
$$\forall k \ge 2 \ \ \ \nu(\mathcal D_k)=\frac{\nu(\mathcal
  D_{k-1})^2}{2}C_{3\times 2^{k-1}-4}^{3\times 2^{k-2}-2}  $$
et il ne nous reste plus qu'à calculer
$$ \begin{array}{lll}
\nu(\mathcal D_k) &=& \frac{\nu(\mathcal
  D_{k-1})^2(3\times 2^{k-1}-4)!}{2\left((3\times 2^{k-2}-2)!\right)^2}
\\
\\ &=& \frac{(3\times 2^{k-1}-4)!}{2^{2^{k-1}-1}}\prod_{i=2}^{k}\frac{1}{\left((3\times 2^{k-i}-2)(3\times 2^{k-i}-3)\right)^{2^{i-1}}}

\end{array}$$
Nous avons donc l'encadrement
$$\frac{(3\times 2^{k-1}-4)!}{2^{2^k }\prod_{i=1}^{k}(3\times 2^{k-i})^{2^{i}}} \le \nu(\mathcal D_k) \le  (3\times 2^{k-1}-4)! $$
D'après la formule de Stirling, nous avons l'équivalence 
 $\ln k!\sim k\ln k$ et donc $\ln (3\times 2^{k-1})! \sim 3\times
2^{k-1}\ln (3\times 2^{k-1})$. Nous avons aussi
$$\begin{array}{lll}
\ln(2^{2^k }\prod_{i=1}^{k}(3\times 2^{k-i})^{2^{i}})&=& 2^k\ln 2 +
(2^{k+1}-1)\ln 3 + \ln 2\ \sum _{i=1}^{k} 2^i(k-i) 
\\
\\ &\le & 2^k\ln 2 +
(2^{k+1}-1)\ln 3 + k \ln 2\ \sum _{i=1}^{k} 2^i 
\\
\\ &\le & 2^k\ln 2 +
(2^{k+1}-1)\ln 3 + k(2^{k+1}-1) \ln 2 
\\
\\ & = & o\left( 3\times
2^{k-1}\ln (3\times 2^{k-1})\right)
\end{array}$$ 
Nous en déduisons immédiatement que $\ln \nu(\mathcal D_k)\sim 3\times
2^{k-1}\ln (3\times 2^{k-1})$. Comme $\ln \nu(\mathcal D_k)\le \ln W(2^{k-1})
\le \ln N(2^{k-1},0)$ et que la suite $(W(d))_{d\ge 1}$ est croissante,
le théorème découle maintenant de la 
Proposition \ref{asy N}
\end{proof}

\begin{figure}[h]
\begin{center}
\begin{tabular}{ccccc}
\includegraphics[height=4cm, angle=0]{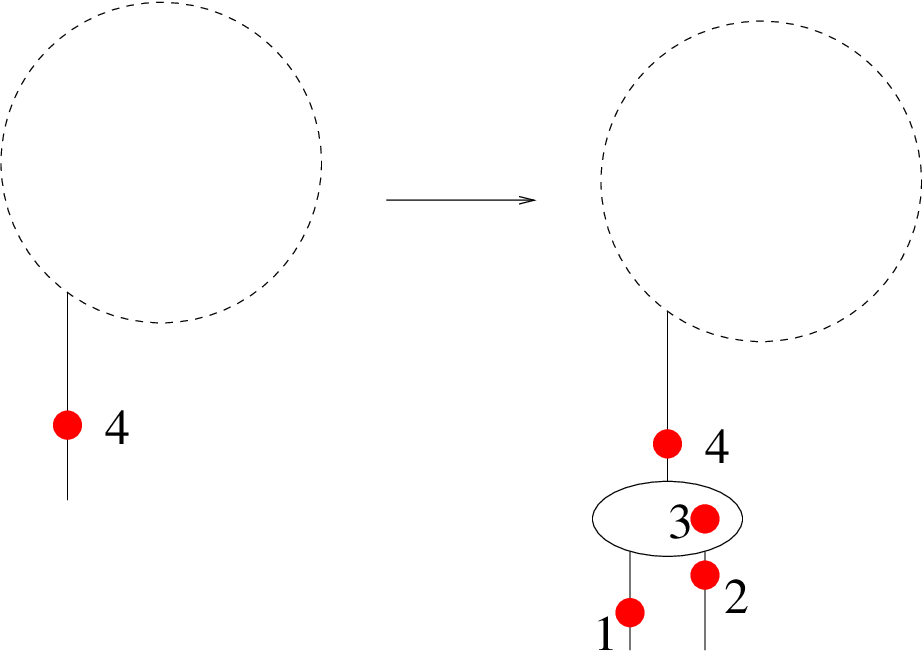}&
\hspace{0ex}  &
\includegraphics[height=4cm, angle=0]{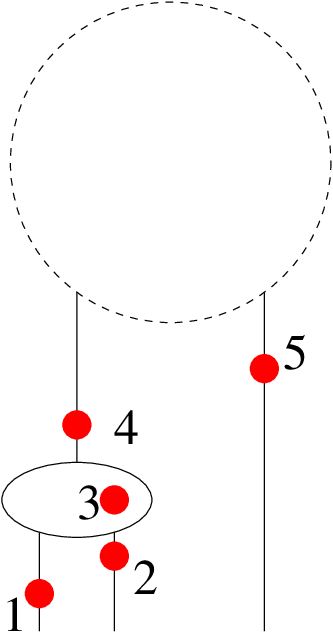}&
\hspace{0ex}  &
\includegraphics[height=4cm, angle=0]{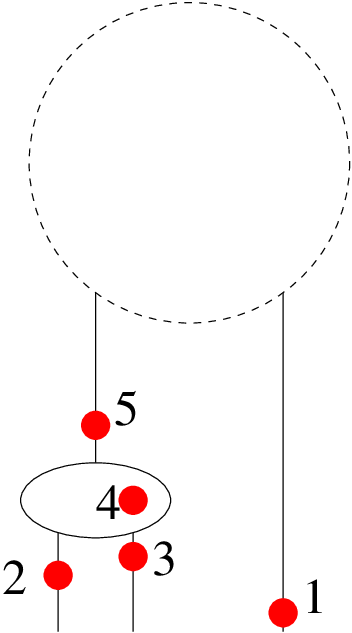}
\\ a) Du degré $d$ au degré $d+1$ &&b) Construit à partir && 
c) Non construit à partir
\\ &&du degré
$d$ && du degré $d$
\end{tabular}
\end{center}
\caption{Croissance des nombres $W(d)$}
\label{FDW}
\end{figure}

\begin{figure}[h]
\begin{center}
\begin{tabular}{ccccccccc}
\includegraphics[height=1.5cm, angle=0]{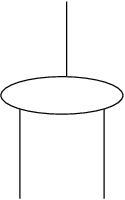}&
\hspace{2ex}  &
\includegraphics[height=1.3cm, angle=0]{Figures/FD1.eps}&
\hspace{2ex}  &
\includegraphics[height=2cm, angle=0]{Figures/FD2.eps}&
\hspace{2ex}  &
\includegraphics[height=3cm, angle=0]{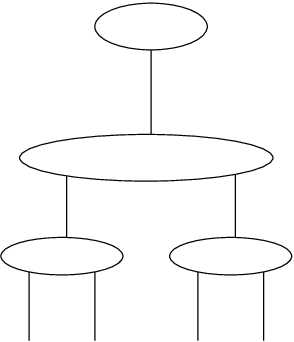}&
\hspace{2ex}  &
\includegraphics[height=4cm, angle=0]{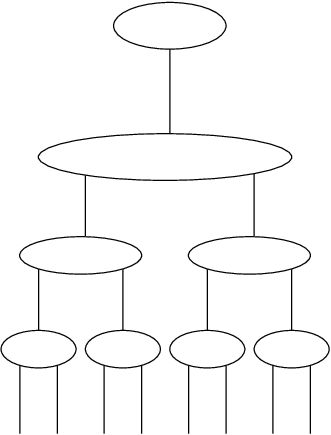}
\\ a) &&b) $\mathcal D_1$ && 
c) $\mathcal D_2$&&d) $\mathcal D_3$&& e) $\mathcal D_4$
\end{tabular}
\end{center}
\caption{Étude asymptotique des nombres $W(d)$}
\label{FDWa}
\end{figure}

\begin{cor}
Par $3d-1$ points de $\RR^2$ passe toujours une courbe algébrique
réelle rationnelle de degré $d$.
\end{cor}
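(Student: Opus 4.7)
Mon plan s'appuie directement sur le th�or�me d'Itenberg, Kharlamov et Shustin qui vient d'�tre d�montr� combinatoirement via les diagrammes en �tages, ainsi que sur la borne inf�rieure de Welschinger �tablie plus haut. L'id�e principale est la suivante : puisque les invariants de Welschinger sont \emph{strictement positifs} pour tout $d\ge 1$, la proposition qui affirme que $|W(d)|\le \mathcal{C}ard\left(\RR\mathcal C(d,0,\omega)\right)$ pour toute configuration g�n�rique $\omega$ fournit imm�diatement l'existence d'au moins une courbe alg�brique r�elle rationnelle de degr� $d$ passant par $\omega$.

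Plus pr�cis�ment, je commencerais par �noncer la cha�ne d'in�galit�s
$$1\le W(d)=|W(d)|\le \mathcal{C}ard\left(\RR\mathcal C(d,0,\omega)\right),$$
la premi�re in�galit� �tant le premier point du th�or�me ci-dessus (positivit� stricte de la suite $(W(d))_{d\ge 1}$) et la seconde la proposition d�j� �tablie. Ceci r�gle le cas des configurations g�n�riques $\omega$ de $3d-1$ points dans $\RR^2$.

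Il resterait alors � traiter le cas d'une configuration $\omega$ \textit{non} g�n�rique, puisque l'�nonc� dit \textit{toujours}. Pour cela, je proc�derais par un argument de passage � la limite : je choisirais une suite $(\omega_n)$ de configurations g�n�riques dans $\RR^2$ convergeant vers $\omega$. Pour chaque $n$, le raisonnement pr�c�dent fournit une courbe $C_n\in\RR\mathcal C(d,0,\omega_n)$. En plongeant l'espace des courbes de degr� $d$ dans l'espace projectivis� des polyn�mes de degr� $d$ (qui est compact), je pourrais extraire une sous suite convergente vers un polyn�me r�el $P$ non nul de degr� $d$ d�finissant une courbe alg�brique r�elle $C$ de degr� $d$ passant par tous les points de $\omega$.

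La difficult� principale est de garantir que $C$ est bien rationnelle, car � la limite une courbe rationnelle irr�ductible peut d�g�n�rer en une courbe r�ductible, perdre son irr�ductibilit�, ou acqu�rir des singularit�s plus compliqu�es qu'un point double. L'argument habituel consiste � observer que le genre g�om�trique est \emph{semi-continu sup�rieurement} (plus pr�cis�ment, le genre arithm�tique est constant et les singularit�s ne peuvent que se d�grader � la limite), et qu'une courbe $C$ limite d'une famille de courbes rationnelles irr�ductibles est soit rationnelle irr�ductible, soit r�ductible avec au moins une composante r�elle. Dans tous les cas, parmi les composantes de $C$, on trouve une courbe alg�brique r�elle rationnelle (�ventuellement de plus petit degr�) passant par un sous ensemble des points de $\omega$ ; quitte � compl�ter en restant � l'int�rieur d'une famille param�tr�e, on en d�duit l'existence d'une courbe r�elle rationnelle de degr� exactement $d$ passant par $\omega$. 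C'est l'�tape technique qui demande le plus de soin, les autres �tapes �tant des cons�quences imm�diates des r�sultats d�j� �tablis dans le texte.
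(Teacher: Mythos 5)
Le c\oe ur de votre argument est exactement celui du texte : le corollaire est une cons\'equence imm\'ediate de la cha\^ine $1\le W(d)\le \mathcal Card\left(\RR\mathcal C(d,0,\omega)\right)$, o\`u la premi\`ere in\'egalit\'e vient de la stricte positivit\'e des $W(d)$ \'etablie dans le th\'eor\`eme pr\'ec\'edent et la seconde de la proposition donnant $|W(d)|$ comme borne inf\'erieure. Le texte ne donne aucune d\'emonstration suppl\'ementaire et sous-entend, comme partout ailleurs, que la configuration de $3d-1$ points est g\'en\'erique ; votre premi\`ere partie suffit donc et co\"incide avec l'argument attendu. Votre seconde partie, qui veut traiter les configurations non g\'en\'eriques par passage \`a la limite, va au-del\`a de ce que le texte affirme, et telle quelle elle comporte une lacune r\'eelle : la limite d'une suite de courbes rationnelles irr\'eductibles de degr\'e $d$ peut \^etre r\'eductible (ou non r\'eduite), et dans ce cas aucune composante ne passe en g\'en\'eral par les $3d-1$ points \`a elle seule ; la phrase \og quitte \`a compl\'eter en restant \`a l'int\'erieur d'une famille param\'etr\'ee \fg{} ne fournit pas de construction d'une courbe rationnelle irr\'eductible de degr\'e exactement $d$ passant par tous les points. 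Si vous tenez \`a couvrir le cas non g\'en\'erique, il faudrait un argument de d\'eg\'en\'erescence nettement plus soign\'e (par exemple via les applications stables) ; sinon, contentez-vous de la premi\`ere partie, qui est la preuve du texte.
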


\begin{exo}\label{exo CH reel}
Il est possible de démontrer une formule de type Caporaso et Harris
pour les nombres $W(d)$ grâce aux diagrammes en étages. 
Cette formule a été démontrée pour la première fois par Itenberg,
Kharlamov et Shustin dans \cite{IKS3}.
La méthode est
la même qu'à l'Exercice 
\ref{exo CH}.
Il faut  considérer les mêmes diagrammes en étages marqués qu'à l'Exercice 
\ref{exo CH} puis leur attribuer une multiplicité réelle de
telles sorte qu'en envoyant un par un les points à l'infini, une
nouvelle formule de récurrence apparaisse. 

Pour s'inspirer, on pourra se reporter à l'article correspondant d'Itenberg,
Kharlamov et Shustin.

\end{exo}

\section{Exercices dont je ne connais pas la solution}\label{exos}

Pour finir, voici trois exercices dont j'aimerais connaître la solution.
N'hésitez pas à m'écrire si vous résolvez l'un d'entre eux!

\begin{exo}
Montrer, en utilisant les digrammes en étages, que pour tout $d$
$$N(d,\frac{(d-1)(d-2)}{2}-2)= \frac{3}{2}(d-1)(d-2)(3d^2-3d-11) $$
\end{exo}

\begin{exo}
Montrer la Formule de Kontsevich en utilisant les diagrammes en étages.
\end{exo}

\begin{exo}
En utilisant la Formule de Kontsevich, on voit facilement que $2^{\left[\frac{d-1}{2}\right]}$
divise $N(d,0)$, où $[x]$ désigne la partie entière de $x$. Qu'en
est-il des nombres $W(d)$?
\end{exo}

\section{Pour aller  plus loin}

Il est possible  de généraliser 
dans de nombreuses directions les problèmes énumératifs que nous avons
discuté
dans ce texte. Par exemple, on peut compter les 
courbes algébriques dans $\CC^n$, avec $n\ge 3$, passant par une certaine
configuration de points donnée. On peut même
remplacer les points par des droites, des plans, ...
Par exemple,  combien de droites dans
$\CC^3$ intersectent 4 autres droites données? Combien de coniques
dans $\CC^3$ passent par 2 points et intersectent 4 droites données?

Comme dans le cas du plan, il existe des formules récursives reliant
certains de ces nombres. De telles formules ont été obtenues
 dans
les années 90 par 
Kontsevich (voir \cite{KonMan1}) et Vakil (voir \cite{Vak1}). Pour une
introduction à ce sujet, nous renvoyons au très bon livre \cite{Vain2}.
Ces mêmes problèmes énumératifs se posent aussi en géométrie
tropicale, et 
 les diagrammes en étages s'avère
encore une fois utiles (voir \cite{Br7}, \cite{Br6}).

On peut aussi ajouter des conditions de tangences, c'est-à-dire
regarder les courbes passant par des points et tangentes à des
droites. Il existe des formules récursives  en
géométrie complexe pour les courbes de genre plus petit que 2,
obtenues par Pandharipande, Vakil, puis Graber, Kock 
et Pandharipande (voir \cite{Pan1}).
Il est possible de définir une notion de tangence entre deux courbes
tropicales, et dans un travail en cours avec Bertrand et Mikhalkin
(voir le futur \cite{Br9}),
nous cherchons à utiliser les diagrammes en étages pour calculer ces
nombres. 

Les invariants de Welschinger existent aussi dans des contextes plus
généraux. Dans son article original \cite{Wel1}, Welschinger ne les
a pas uniquement 
défini  pour des courbes 
rationnelles 
passant par des configurations de points réels, mais par
des \textit{configurations réelles de points}. C'est-à-dire que l'on
se fixe des points réels et  des paires de points complexes conjugués. Plus
tard, Welschinger a aussi défini des invariants pour les courbes
rationnelles dans $\RR^3$ (voir \cite{Wel2}). Encore une fois, la
géométrie tropicale et les
diagrammes en étages
sont des moyens pratiques de calculer ces invariants (voir par exemple
\cite{Br7},
\cite{Br6}, \cite{Br8}, \cite{IKS2}, \cite{IKS3}, \cite{Sh8}).

Pour terminer, précisons que la géométrie tropicale n'est pas le seul
moyen connu pour 
calculer les invariants de Welschinger. En utilisant la géométrie
symplectique, Welschinger (\cite{Wel4}) et Solomon (\cite{Sol1}) ont
pu aussi calculer certains de ces invariants.

\section*{Appendice: Preuve du Théorème \ref{thm:IKS}, errata (en collaboration avec
  Gurvan Mével)}

La preuve de l'asymptotique logarithmique du Théorème \ref{thm:IKS}
 donnée plus haut n'est pas satisfaisante car elle ne traite
 en fait que la sous-suite $(W(2^k))_{k\ge 0}$. Nous corrigeons ce
 défaut dans ce qui suit.

\begin{proof}[Preuve de l'asymptotique logarithmique de la suite
    $(W(d))_{d\ge 1}$.] 
  On considère $(A_d)_{d\ge 1}$
  la suite d'ensembles de diagrammes en étages de degré $d$
construite récursivement de la façon
suivante :
\begin{itemize}
\item $A_1$ est constitué de l’unique diagramme en étages de degré 1,
\item les diagrammes de $A_{d+1}$ sont obtenus à partir de ceux de
  $A_d=\{\mathcal D_1,\ldots, \mathcal D_k\}$
  en recollant le tripode représenté
à la Figure \ref{FDWa}a
  à une arête infinie de $\mathcal D_i$, et ce pour
  chaque $\mathcal D_i$ et chaque arête infinie de $\mathcal D_i$. 
\end{itemize}
L'ensemble $A_2$ est constitué de l’unique diagramme en étages de
degré 2, l'ensemble $A_3$ est constitué du diagramme en étages
représenté à la Figure \ref{FD3}c,  et les diagrammes en étages
constituant l'ensemble $A_4$ sont représentés à la Figure \ref{FDA4}.
\begin{figure}[h]
\begin{center}
\begin{tabular}{ccc}
\includegraphics[height=3cm, angle=0]{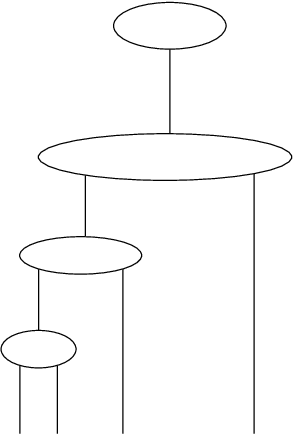}&
\hspace{2ex}  &
\includegraphics[height=3cm, angle=0]{Figures/FDa2.eps}
\end{tabular}
\end{center}
\caption{Diagrammes en étages constituant l'ensemble $A_4$}
\label{FDA4}
\end{figure}
On note $\lambda_d$
le nombre de marquages possibles pour l’ensemble des diagrammes de
$A_d$.
Si nous oublions les 3 points les plus haut,
nous voyons apparaître la relation
\[
\forall d\ge 2 \qquad
\lambda_d=\frac{1}{2}\sum_{\begin{array}{c}d_1+d_2=d\\ d_1,d_2\ge
    1\end{array}}
{3d-4\choose 3d_1-2} \lambda_{d_1}\lambda_{d_2}.
\]
De la même manière que nous avons montré la croissance de la suite
$(W(d))_{d\ge 1}$, nous obtenons que
la suite $(\lambda_d)_{d\ge 1}$ est aussi croissante. 
En ne considérant que le terme de droite pour $d_1=\left[\frac{d}{2}\right]$
dans l'égalité ci-dessus,
nous obtenons alors l'inégalité
\[
\lambda_d \ \ge \ \frac{1}{2}
       {3d-4\choose 3\left[\frac{d}{2}\right]-2}
       \lambda_{\left[\frac{d}{2}\right]}\lambda_{d-\left[\frac{d}{2}\right]}
     \ \ge \    \frac{1}{2}
       {3d-4\choose 3\left[\frac{d}{2}\right]-2}
       \lambda_{\left[\frac{d}{2}\right]}^2.
\]
On pose $l=[\log_2 d]$, et on
définit les entiers $k_0,k_1,\cdots,k_l$ par
$k_0=d$ et
  $k_{i+1}=\left[\frac{k_i}{2}\right]$. Remarquons que $k_l=1$.
On obtient alors de proche en proche
\[
\lambda_d \ \ge \ \prod_{i=1}^l
\frac{1}{2^{2^{i-1}}}
     {3k_{i-1}-4\choose 3k_i-2}^{2^{i-1}}
      \ = \
\frac{1}{2^{2^l-1}}
     \prod_{i=1}^l
     {3k_{i-1}-4\choose 3k_i-2}^{2^{i-1}}.
\]
Comme $k_i\le k_{i-1} -k_i \le k_i+1$ et $2^l\le d$, on a
\begin{align*}
 \lambda_d & \ \ge \  \frac{1}{2^d}
     \prod_{i=1}^l
     \left(\frac{(3k_{i-1}-4)!}{(3k_i-2)!(3(k_{i-1}
       -k_i)-2)!}\right)^{2^{i-1}}
     \\
     & \ \ge \  \frac{1}{2^d}
     \prod_{i=1}^l
     \left(\frac{(3k_{i-1}-4)!}{(3k_i+1)!^2}\right)^{2^{i-1}}
     \\
     & \ \ge \  \frac{(3k_0-4)!}{2^{d}}
     \prod_{i=1}^{l-1}
     \left(\frac{(3k_{i}-4)!}{(3k_i+1)!}\right)^{2^{i}}
     \times \frac{1}{(3k_l+1)!^{2^l}}
     \\
     & \ \ge \  \frac{(3d-4)!}{48^{d}}
     \prod_{i=1}^{l-1}
     \frac{1}{(3k_i+1)^{5\times 2^{i}}}
    \\
     & \ \ge \  \frac{(3d-4)!}{48^{d}}
     \prod_{i=1}^{l-1}
     \frac{1}{(3\frac{n}{2^i}+1)^{5\times 2^{i}}}.
\end{align*}
En passant au logarithme, on obtient
\[
\ln \lambda_d \ge \ln (3d-4)! - d \ln 48 -
5\sum_{i=1}^{l-1} 2^i \ln (3\frac{n}{2^i}+1).
\]
D'après la formule de Stirling, nous avons l'équivalence 
$\ln k!\sim k\ln k$, et donc
\[
\ln (3 d-4)! \sim (3d-4)\ln(3d-4)\sim 3d \ln d.
\]
Comme $n\le 2^{l+1}$, nous avons aussi
\begin{align*}
  \sum_{i=1}^{l-1} 2^i \ln (3\frac{n}{2^i}+1) & \ \le \
\sum_{i=1}^{l-1} 2^i \ln (3\times 2^{l+1-i}+1)
   \\ &\ \le \
   \sum_{i=1}^{l-1} 2^i \ln (3\times 2^{l+2-i})
     \\ &\ \le \
     \ln 6 \times  \sum_{i=1}^{l-1} 2^i
     + \ln 2\times \sum_{i=1}^{l-1} (l+1-i) 2^i 
     \\ &\ \le \
     \ln 6 \times  2^l
     + \ln 2\times 2^{l}\sum_{i=1}^{l-1} \frac{(l-i+1)}{ 2^{l-i}}
          \\ &\ \le \
     d\ln 6 
     + d\ln 2\times \sum_{j=0}^{+\infty} \frac{(j+1)}{ 2^{j}}
            \\ &\ \le \
     d\ln 6 
     + d\ln 2\times \frac{1}{(1-\frac{1}{2})^2}
                 \\ &\ \le \
     d(\ln 6 +4\ln 2).
\end{align*}
On en déduit alors 
\[
d\ln 48 +5 \times\sum_{i=1}^{l-1} 2^i \ln (3\frac{n}{2^i}+1)=o(3d\ln
d),
\]
et donc 
\[
 \ln (3d-4)! - d \ln 48 -
5\sum_{i=1}^{l-1} 2^i \ln (3\frac{n}{2^i}+1)\sim 3d\ln d.
\]
D'après le Théorème \ref{FD}, on a
\[
\forall d\ge 1\quad N(d,0)\ge W(d) \ge \lambda_d,
\]
et donc
\[
\forall d\ge 1\quad \ln N(d,0)\ge \ln W(d) \ge \ln (3d-4)! - d \ln 48 -
5\sum_{i=1}^{l-1} 2^i \ln (3\frac{n}{2^i}+1).
\]
Le théorème découle maintenant de la 
Proposition \ref{asy N}.
\end{proof}

\bibliographystyle{alpha}
\bibliography{../../FrBiblio.bib}

\begin{thebibliography}{ABLdM11}

\bibitem[ABLdM11]{Br8}
A.~Arroyo, E.~Brugall\'e, and L.~L{\'o}pez~de Medrano.
\newblock Recursive formula for {W}elschinger invariants of the projective
  plane.
\newblock {\em Int Math Res Notices}, 5:1107--1134, 2011.

\bibitem[BBM14]{Br9}
B.~Bertrand, E.~Brugall{\'e}, and G.~Mikhalkin.
\newblock Genus 0 characteristic numbers of the tropical projective plane.
\newblock {\em Compos. Math.}, 150(1):46--104, 2014.

\bibitem[BM]{Br6}
E.~Brugall\'e and G.~Mikhalkin.
\newblock Floor decompositions of tropical curves in any dimension.
\newblock En préparation, version préliminaire accessible à l'adresse
  \url{http://erwan.brugalle.perso.math.cnrs.fr/articles/FDn/FDGeneral.pdf}.

\bibitem[BM07]{Br7}
E.~Brugall\'e and G.~Mikhalkin.
\newblock Enumeration of curves via floor diagrams.
\newblock {\em Comptes Rendus de l'Académie des Sciences de Paris}, série I,
  345, 2007.

\bibitem[BPS08]{BIT}
N.~Berline, A.~Plagne, and C.~Sabbah, editors.
\newblock {\em Géométrie tropicale}. Éditions de l'École Polytechnique,
  Palaiseau, 2008.
\newblock 128 p.

\bibitem[CH98]{CapHar1}
L.~Caporaso and J.~Harris.
\newblock Counting plane curves of any genus.
\newblock {\em Invent. Math.}, 131(2):345--392, 1998.

\bibitem[DFI95]{DiFrIt}
P.~Di~Francesco and C.~Itzykson.
\newblock Quantum intersection rings.
\newblock In {\em The moduli space of curves (Texel Island, 1994)}, volume 129
  of {\em Progr. Math.}, pages 81--148. Birkh\"auser Boston, Boston, MA, 1995.

\bibitem[DK00]{DK}
A.~I. Degtyarev and V.~M. Kharlamov.
\newblock Topological properties of real algebraic varieties: {R}okhlin's way.
\newblock {\em Russian Math. Surveys}, 55(4):735--814, 2000.

\bibitem[GKP02]{Pan1}
T.~Graber, J.~Kock, and R.~Pandharipande.
\newblock Descendant invariants and characteristic numbers.
\newblock {\em Amer. J. Math.}, 124:611--647, 2002.

\bibitem[GM07]{GM2}
A.~Gathmann and H.~Markwig.
\newblock {The Caporaso-Harris formula and plane relative Gromov-Witten
  invariants in tropical geometry}.
\newblock {\em Mathematische Annalen}, 338:845--868, 2007.

\bibitem[Hil01]{Hil}
D.~Hilbert.
\newblock Mathematische probleme.
\newblock {\em Arch. Math. Phys.}, 1:43--63, 1901.
\newblock (en allemand).

\bibitem[IKS03]{IKS1}
I.~Itenberg, V.~Kharlamov, and E.~Shustin.
\newblock Welschinger invariant and enumeration of real rational curves.
\newblock {\em Int. Math. Research Notices}, 49:2639--2653, 2003.

\bibitem[IKS04]{IKS2}
I.~Itenberg, V.~Kharlamov, and E.~Shustin.
\newblock Logarithmic equivalence of {W}elschinger and {G}romov-{W}itten
  invariants.
\newblock {\em Uspehi Mat. Nauk}, 59(6):85--110, 2004.
\newblock (en russe). version anglaise: arXiv: math.AG/0407188.

\bibitem[IKS09]{IKS3}
I.~Itenberg, V.~Kharlamov, and E.~Shustin.
\newblock A {C}aporaso-{H}arris type formula for {W}elschinger invariants of
  real toric {D}el {P}ezzo surfaces.
\newblock {\em Comment. Math. Helv.}, 84:87--126, 2009.

\bibitem[IMS07]{Mik9}
I.~Itenberg, G~Mikhalkin, and E.~Shustin.
\newblock {\em Tropical Algebraic Geometry}, volume~35 of {\em Oberwolfach
  Seminars Series}.
\newblock Birkhäuser, 2007.

\bibitem[KM94]{KonMan1}
M.~Kontsevich and Yu. Manin.
\newblock Gromov-{W}itten classes, quantum cohomology, and enumerative
  geometry.
\newblock {\em Comm. Math. Phys.}, 164(3):525--562, 1994.

\bibitem[KV06]{Vain2}
J.~Kock and I.~Vainsencher.
\newblock {\em An Invitation to Quantum Cohomology: Kontsevich's Formula for
  Rational Plane Curves}.
\newblock Progress in Mathematics. Birkhauser, 2006.

\bibitem[Mik05]{Mik1}
G.~Mikhalkin.
\newblock {Enumerative tropical algebraic geometry in $\mathbb R^2$}.
\newblock {\em J. Amer. Math. Soc.}, 18(2):313--377, 2005.

\bibitem[Mik06]{Mik3}
G.~Mikhalkin.
\newblock Tropical geometry and its applications.
\newblock In {\em International Congress of Mathematicians. Vol. II}, pages
  827--852. Eur. Math. Soc., Z\"urich, 2006.

\bibitem[Shu06]{Sh8}
E.~Shustin.
\newblock A tropical calculation of the {W}elschinger invariants of real toric
  {D}el {P}ezzo surfaces.
\newblock {\em J. Algebraic Geom.}, 15:285--322, 2006.
\newblock Version corrigée ar{X}iv:math/0406099.

\bibitem[Sol]{Sol1}
J.~Solomon.
\newblock En préparation.

\bibitem[Vai95]{Vain1}
I.~Vainsencher.
\newblock Enumeration of n-fold tangent hyperplanes to a surface.
\newblock {\em Journ. of {A}lg. {G}eom.}, 4:503--526, 1995.

\bibitem[Vak00]{Vak1}
R.~Vakil.
\newblock The enumerative geometry of rational and elliptic curves in
  projective space.
\newblock {\em J. Reine Angew. Math. (Crelle's Journal)}, 529:101--153, 2000.

\bibitem[Wel05a]{Wel1}
J.~Y. Welschinger.
\newblock Invariants of real symplectic 4-manifolds and lower bounds in real
  enumerative geometry.
\newblock {\em Invent. Math.}, 162(1):195--234, 2005.

\bibitem[Wel05b]{Wel2}
J.~Y. Welschinger.
\newblock Spinor states of real rational curves in real algebraic convex
  3-manifolds and enumerative invariants.
\newblock {\em Duke Math. J.}, 127(1):89--121, 2005.

\bibitem[Wel07]{Wel4}
J.~Y. Welschinger.
\newblock Optimalité, congruences et calculs d'invariants des variétés
  symplectiques réelles de dimension quatre.
\newblock arXiv:0707.4317, 2007.

\bibitem[Zeu73]{Zeu1}
H.~G. Zeuthen.
\newblock Almindelige egenskaber ved systemer af plane kurver.
\newblock {\em Kongelige Danske Vidensk abernes Selskabs Skrifter -
  Naturvidenskabelig og Mathematisk}, 10:285--393, 1873.

\end{thebibliography}

\end{document}